\newtheorem{theorem}{Theorem}[section]
\newtheorem{lemma}[theorem]{Lemma}
\newtheorem{pro}[theorem]{Proposition}
\renewcommand{\div}{{\rm div }}
\newcommand{\bt}{\begin{theorem}}
	\newcommand{\bl}{\begin{lemma}}
		\newcommand{\el}{\end{lemma}}
	\newcommand{\et}{\end{theorem}}
\newcommand{\bn}{\begin{eqnarray}}
	\newcommand{\en}{\end{eqnarray}}
\newcommand{\bnn}{\begin{eqnarray*}}
	\newcommand{\enn}{\end{eqnarray*}}
\newcommand{\ba}{\begin{aligned}}
	\newcommand{\ea}{\end{aligned}}
\newcommand{\be}{\begin{equation}}
	\newcommand{\ee}{\end{equation}}
\newcommand{\whBo}{\widehat{\mathscr{B}_{1}}}
\newcommand{\witBo}{\widetilde{\mathscr{B}_{1}}}
\newcommand{\sfwitBo}{\widehat{\mathscr{B}_{\frac{3}{4}}}}
\newcommand{\bBV}{\boldsymbol{V}}
\newcommand{\Bn}{{\boldsymbol{n}}}
\newcommand{\Bt}{{\boldsymbol{\tau}}}
\newcommand{\Bu}{{\boldsymbol{u}}}
\newcommand{\Be}{{\boldsymbol{e}}}
\newcommand{\D}{{\boldsymbol{D}}}
\newcommand{\Ps}{\mathbf{\Psi}}
\newcommand{\supp}{\text{supp}}
\newcommand{\BPsi}{{\boldsymbol{\Psi}}}
\newcommand{\Bx}{{\boldsymbol{x}}}
\newcommand{\Bf}{{\boldsymbol{f}}}
\newcommand{\By}{{\boldsymbol{y}}}
\newcommand{\Bo}{{\boldsymbol{\omega}}}
\newcommand{\OR}{{\mathscr{O}}_{R}}
\newcommand{\DR}{{\mathcal{D}}_{R}}
\begin{document}
	
	\title[Liouville-type theorems]
	{Liouville-type theorems for steady  Navier-Stokes system under helical symmetry or Navier boundary conditions}

	\author{Jingwen Han}
	\address{School of Mathematical Sciences, Shanghai Jiao Tong University, 800 Dongchuan Road, Shanghai, China}
	\email{hjw126666@sjtu.edu.cn}

	\author{Yun Wang}
	\address{School of Mathematical Sciences, Center for dynamical systems and differential equations, Soochow University, Suzhou, China}
	\email{ywang3@suda.edu.cn}

	\author{Chunjing Xie}
	\address{School of mathematical Sciences, Institute of Natural Sciences,
		Ministry of Education Key Laboratory of Scientific and Engineering Computing,
		and CMA-Shanghai, Shanghai Jiao Tong University, 800 Dongchuan Road, Shanghai, China}
	\email{cjxie@sjtu.edu.cn}

	\begin{abstract}
		In this paper,  the Liouville-type theorems for the steady Navier-Stokes system are investigated. First, we prove that any bounded smooth helically symmetric solution in $\mathbb{R}^3$  must be a constant vector. Second,  for steady Navier-Stokes system in a slab supplemented with Navier boundary conditions, we  prove that any bounded smooth solution must be zero if either the swirl or radial velocity is axisymmetric, or $ru^{r}$ decays to zero as $r$ tends to infinity. Finally, when the velocity is not big in $L^{\infty}$-space, the general three-dimensional steady Navier-Stokes flow in a slab with the Navier boundary conditions must be a Poiseuille type flow. The key idea of the proof  is to establish Saint-Venant  type estimates that characterize the growth of Dirichlet integral of nontrivial solutions.
	\end{abstract}

	\keywords{Liouville-type theorem, steady Navier-Stokes system, helical symmetry,  slab,  Navier boundary conditions,  axisymmetry.}
	\subjclass[2010]{
		35B53,	 35Q30, 35B10,  35J67,  76D05}

	
	\maketitle

	\section{Introduction and Main Results}
	Classical Liouville theorem asserts that  bounded harmonic functions in whole spaces are constants.  The generalization of this kind of classification results for PDEs is called Liouville-type theorem nowadays, which has many applications for PDEs, such as analyzing rigidity, hypothetical singularity and asymptotic behavior of solutions. In this paper, we are interested in the Liouville-type theorem for solutions to the steady incompressible Navier-Stokes system,
	\begin{equation}\label{eqsteadyns}
		\left\{ \ba
		& -\Delta \Bu + (\Bu \cdot \nabla )\Bu + \nabla P  = 0, \ \ \ \ \
		&\mbox{in}\ \Omega, \\
		& \nabla \cdot \Bu =0,  \ \ \ \ \ &\mbox{in}\ \Omega,  \\
		\ea \right.
	\end{equation}
	where the unknown function $\Bu=(u^1,u^2,u^3)$  is the velocity field,  $P$ is the pressure, $\Omega$ is the domain.
	
	A  weak solution $\Bu$ to \eqref{eqsteadyns}  is called D-solution if it satisfies
	\begin{equation}\label{eqDs}
		\int_{\Omega}|\nabla\Bu|^{2}\, d\Bx< +\infty.
	\end{equation}
	The existence of D-solution in both bounded  and exterior domains was first proved by  Leray \cite{LEJMPA33}. A longstanding open problem is whether the solution $\Bu(\Bx)$ in the whole space equals to zero if it vanishes at infinity \cite[X.9 Remark X.9.4]{GAGP11}.  The problem in $\mathbb{R}^2$ was solved by Gilbarg and   Weinberger \cite{GWASP78}, where
	D-solutions  were
	proved to be constants. This significant  result was generalized in  \cite{KPRJMFM15,CWDCDS16} for axisymmetric D-solution in $\mathbb{R}^3$. However, whether three-dimensional D-solutions in the whole space are constants is still a widely open problem, although there are many studies on this problem in the last several decades. In particular, the important progress has been made when some extra integrability or decay conditions on velocity $\Bu$ or vorticity $\Bo$ were prescribed.  It was  proved in \cite[Theorem X. 9.5]{GAGP11} that the solution must be zero if  $\Bu\in L^{\frac{9}{2}}(\mathbb{R}^{3})$.  For more references in this direction, one may refer to \cite{CWJDE16,CGPLAP21,YX20JMAA,KTWJFA17,JMFM13bfz,CCMP14,CYJMAA13,SN16,WJDE19,NZ19} and references therein.
	
	On the other hand, a natural and important problem is to classify $L^{\infty}$-bounded solutions of Navier-Stokes system, which has many applications in characterization of singularity of solutions and asymptotic behavior of solutions at far fields. For instance, an important progress in \cite{CSYTIMRN08,CSYTCPDE09,KNSS09} suggests that every bounded axisymmetric steady solution with type I bound for Navier-Stokes system must vanish. In fact, straightforward computations show that bounded steady solutions of Stokes system in $\mathbb{R}^{n}$ must be constants.
	As far as Navier-Stokes system is concerned,
	every bounded two-dimensional  and three-dimensional axisymmetric without swirl  flows in the whole space must be  constant vectors (cf. \cite{KNSS09}). A further important progress  in \cite{LRZMA22} indicated that the bounded steady solution of axisymmetric Navier-Stokes system in $\mathbb{R}^{2}\times \mathbb{T}$ must be trivial  if $ru^{\theta}$ is bounded.
	
	In this paper, we are concerned with the Liouville theorem for helically symmetric flows. Helical symmetry is invariant under a one-dimensional subgroup $\mathcal{G}_{\kappa}$ of rigid motions generated by a simultaneous rotation around a fixed axis and translation along the same  symmetric axis. Namely, the subgroup $\mathcal{G}_{\kappa}$ is a one-parameter group of isometries of $\mathbb{R}^{3}$,
	\begin{equation}\label{eqA2}
		\mathcal{G}_{\kappa}=
		\left\{
		S_{\rho}:\mathbb{R}^{3}
		\rightarrow
		\mathbb{R}^{3} \big|  \rho\in\mathbb{R}
		\right\},
	\end{equation}
	here the transformation $S_{\rho}$ (its graph is shown in Figure  \ref{Fig1-1}) is defined by
	\begin{equation}\label{eqA3}
		S_{\rho}(\Bx)=
		R_{\rho}(\Bx)
		+\left(
		\begin{array}{c}
			0 \\
			0 \\
			\kappa \rho
		\end{array}
		\right),
	\end{equation}
	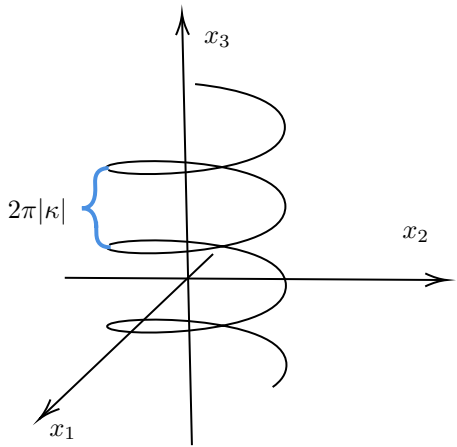
\begin{figure}[h]
		\begin{center}
			\centering

			\tikzset{every picture/.style={line width=0.75pt}} 
			
			\begin{tikzpicture}[x=0.75pt,y=0.75pt,yscale=-1,xscale=1]
				
				\draw   (305.92,56.83) .. controls (328.51,59.18) and (351.12,65.53) .. (351.21,78.53) .. controls (351.38,104.53) and (261.09,105.13) .. (261.05,99.13) .. controls (261.01,93.13) and (351.3,92.53) .. (351.48,118.53) .. controls (351.65,144.53) and (261.35,145.13) .. (261.31,139.13) .. controls (261.27,133.13) and (351.57,132.53) .. (351.74,158.53) .. controls (351.91,184.53) and (261.62,185.13) .. (261.58,179.13) .. controls (261.54,173.13) and (351.83,172.53) .. (352.01,198.53) .. controls (352.04,202.98) and (349.41,206.69) .. (345.04,209.75) ;
				\draw    (240.2,154.6) -- (430.93,155.72) ;
				\draw [shift={(432.93,155.73)}, rotate = 180.34] [color={rgb, 255:red, 0; green, 0; blue, 0 }  ][line width=0.75]    (10.93,-3.29) .. controls (6.95,-1.4) and (3.31,-0.3) .. (0,0) .. controls (3.31,0.3) and (6.95,1.4) .. (10.93,3.29)   ;
				\draw    (315,142.6) -- (228.78,224.56) ;
				\draw [shift={(227.33,225.93)}, rotate = 316.45] [color={rgb, 255:red, 0; green, 0; blue, 0 }  ][line width=0.75]    (10.93,-3.29) .. controls (6.95,-1.4) and (3.31,-0.3) .. (0,0) .. controls (3.31,0.3) and (6.95,1.4) .. (10.93,3.29)   ;
				\draw    (304.33,239.08) -- (299.38,23.08) ;
				\draw [shift={(299.33,21.08)}, rotate = 88.69] [color={rgb, 255:red, 0; green, 0; blue, 0 }  ][line width=0.75]    (10.93,-3.29) .. controls (6.95,-1.4) and (3.31,-0.3) .. (0,0) .. controls (3.31,0.3) and (6.95,1.4) .. (10.93,3.29)   ;
				\draw  [color={rgb, 255:red, 74; green, 144; blue, 226 }  ,draw opacity=1 ][fill={rgb, 255:red, 255; green, 255; blue, 255 }  ,fill opacity=1 ][line width=1.5]  (262.33,99.19) .. controls (257.67,99.31) and (255.4,101.7) .. (255.51,106.36) -- (255.59,109.37) .. controls (255.75,116.04) and (253.5,119.43) .. (248.84,119.54) .. controls (253.5,119.43) and (255.91,122.7) .. (256.08,129.36)(256.01,126.36) -- (256.16,132.37) .. controls (256.27,137.04) and (258.66,139.31) .. (263.33,139.19) ;
				
				\draw (231,227) node [anchor=north west][inner sep=0.75pt]  [font=\footnotesize] [align=left] {$\displaystyle x_{1}$};
				\draw (409,127.4) node [anchor=north west][inner sep=0.75pt]  [font=\footnotesize] [align=left] {$\displaystyle x_{2}$};
				\draw (309,29) node [anchor=north west][inner sep=0.75pt]  [font=\footnotesize] [align=left] {$\displaystyle x_{3}$};
				\draw (210,113) node [anchor=north west][inner sep=0.75pt]  [font=\footnotesize] [align=left] {$\displaystyle 2\pi |\kappa |$};

			\end{tikzpicture}

		\end{center}
		\caption{the transfomation $S_{\rho}$}
		\label{Fig1-1}

	\end{figure}
	where $R_{\rho}$ is the rotation matrix by an angle $\rho$ around the $x_{3}$-axis, i.e.,
	\[
	R_{\rho}=\left(
	\begin{array}{ccc}
		\cos\rho  &      \sin\rho & 0 \\
		-\sin\rho & 	\cos\rho & 0 \\
		0         &            0 & 1
	\end{array}
	\right),
	\]
	and the nonzero constant $\kappa$ denotes the translation along the $x_{3}$-axis, which we call the step or pitch. A smooth function $h(\Bx)$ and vector field $\Bu(\Bx)$ are helically symmetric if $h\left(S_{\rho}(\Bx)\right)=h(\Bx)$ and $\Bu(S_{\rho}(\Bx))= R_{\rho}\Bu(\Bx)$,  for all $\rho\in \mathbb{R}$, respectively. Hence any helically symmetric flow is  periodic in $x_{3}$-direction, with   a period $2\pi|\kappa|$. Throughout this paper we will assume $\kappa=\frac{1}{2\pi}$ for simplicity.
	
	Helically symmetric flows have been widely studied in various aspects.  The global  regularity of helically symmetric flows  was first studied in \cite{MTLARMA90}. In \cite{LMNLT14,BLNNT13},  the planar limit and  stability of the  helically symmetric flows were achieved. Recently, the existence of helical invariant weak solutions in a helical domain for the steady Navier-Stokes system with helically symmetric external force was obtained in \cite{KLWSIAM22}. In a periodic or infinite pipe subject to the total Navier-slip boundary conditions,  it was proved  that the smooth solution must be a helically symmetric  flow under some conditions \cite{ALPY}. For more references related to helically symmetric flows, one may refer to \cite{BLLN15,ET09,JLN17,LN17,JMCLN18PHYD} and references therein.
	
	Our first main result is a Liouville-type theorem for helically symmetric steady flows.
	\begin{theorem}\label{th:01}
		Assume that  $\Bu$ is a   bounded smooth helically symmetric solution to the  Navier-Stokes system \eqref{eqsteadyns} in $\mathbb{R}^{3}$. Then $\Bu$  must be a constant vector of the form $\left(0, 0, C\right)$.
	\end{theorem}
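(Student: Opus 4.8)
The plan is to prove $\na\Bu\equiv\mathbf 0$ by a Saint-Venant scheme, after which the conclusion is immediate: helical symmetry forces a constant solution $\Bu\equiv\Bu_0$ to satisfy $\Bu_0=R_\rho\Bu_0$ for every $\rho$, and the only vectors fixed by all rotations about the $x_3$-axis have the form $(0,0,C)$. The crucial structural reduction is that, with $\kappa=\frac1{2\pi}$, helical symmetry makes $\Bu$ periodic in $x_3$ with period $2\pi|\kappa|=1$; I will therefore work on the solid-torus domain $\{r<R\}\times\mathbb T$ and study the Dirichlet integral $D(R)=\int_{\{r<R\}\times\mathbb T}|\na\Bu|^2\,d\Bx$. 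The goal is to show $D\equiv0$.

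To build the estimate, fix a radial cutoff $\zeta_R$ equal to $1$ on $\{r<R\}$, vanishing on $\{r>2R\}$, with $|\na\zeta_R|\le C/R$, and test the momentum equation against $\Bu\zeta_R^2$. Using $\na\cdot\Bu=0$ and integrating by parts yields
\[
\int|\na\Bu|^2\zeta_R^2=-\int\na\Bu:(\Bu\otimes\na\zeta_R^2)+\int\Phi\,\Bu\cdot\na\zeta_R^2,\qquad \Phi=\tfrac12|\Bu|^2+P,
\]
where both terms on the right are supported in the annulus $A_R=\{R<r<2R\}\times\mathbb T$. Since $|\Bu|\le M$, $|\na\zeta_R|\le C/R$ and $|A_R|\le CR^2$ (here the $x_3$-periodicity is essential, as it keeps the volume at the two-dimensional rate rather than $O(R^3)$), Cauchy-Schwarz bounds the first, viscous, term by $CM\,(D(2R)-D(R))^{1/2}$, exactly the shape I want. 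Because $\int\Bu\cdot\na\zeta_R^2=0$ by incompressibility, I may replace $\Phi$ by $\Phi-\bar\Phi_{A_R}$ in the second term. The target is then the clean inequality $D(R)^2\le C\,(D(2R)-D(R))$.

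The main obstacle is the head-pressure term: I must bound $\int_{A_R}(\Phi-\bar\Phi_{A_R})\,\Bu\cdot\na\zeta_R^2$ by the annular Dirichlet energy, whereas a naive Poincar\'e estimate of $\|\Phi-\bar\Phi_{A_R}\|_{L^2(A_R)}$ loses the fatal factor $R=\mathrm{diam}(A_R)$. The device for overcoming this is the equation for the Bernoulli head pressure itself: writing $\Bo=\curl\Bu$, the momentum equation gives $\na\Phi=\Delta\Bu+\Bu\times\Bo$, and taking the divergence (using $\Bu\cdot(\Bu\times\Bo)=0$ and $\curl\Bo=-\Delta\Bu$) yields
\[
\Delta\Phi-\Bu\cdot\na\Phi=|\Bo|^2\ge0,
\]
so $\Phi$ obeys a maximum principle for the drift operator $\Delta-\Bu\cdot\na$. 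I would combine this oscillation control of $\Phi$ with interior elliptic estimates — available because the $L^\infty$ bound bootstraps through $\Delta P=-\p_i\p_j(u^iu^j)$ and the standard $L^p$/Schauder theory to local bounds on $\na\Bu$ and $\na^2\Bu$ — so as to convert the head-pressure oscillation into the vorticity, hence Dirichlet, energy on a slightly enlarged annulus. Making this trade quantitative, keeping all constants at the two-dimensional scaling furnished by the periodicity so that the Poincar\'e loss is genuinely beaten, is the technical heart of the proof and the step I expect to be hardest.

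Once the Saint-Venant inequality $D(R)^2\le C(D(2R)-D(R))$ is in hand, I close the argument with a crude a priori bound: the $L^\infty$ bound on $\Bu$ together with local control of $P$ shows that $D(R)$ grows at most polynomially in $R$. Since $D$ is nondecreasing, if $D(R_0)>0$ for some $R_0$ then iterating $D(2R)\ge D(R)+C^{-1}D(R)^2$ along $R=2^nR_0$ forces double-exponential growth, contradicting the polynomial bound. Hence $D\equiv0$, so $\na\Bu\equiv\mathbf 0$; by the symmetry remark in the first paragraph, $\Bu=(0,0,C)$.
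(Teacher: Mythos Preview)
Your Saint-Venant scheme and the reduction to a periodic slab via helical symmetry are both correct and are exactly what the paper does. The endgame is also fine: once you have a differential inequality for the truncated Dirichlet energy, the bounded-gradient Lemma~\ref{le:pebu} gives at most polynomial growth, and this collides with the super-polynomial growth forced by the inequality.

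The gap is precisely where you flag it: the head-pressure term. Your proposed mechanism, the subsolution identity $\Delta\Phi-\Bu\cdot\na\Phi=|\Bo|^2\ge0$, controls the \emph{supremum} of $\Phi$ over domains (no interior max), but it does not by itself give an $L^2$ oscillation bound of $\Phi$ on a dyadic annulus in terms of the Dirichlet energy there. To get from ``$\Phi$ is a drift subsolution'' to ``$\|\Phi-\bar\Phi_{A_R}\|_{L^2(A_R)}$ is controlled by $\|\na\Bu\|_{L^2(A_R)}$ without the factor $R$'' would require a quantitative De Giorgi/Harnack argument on the annulus with drift $\Bu$, and you have not indicated how the constants would beat the $R$-scaling. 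As stated, this step is a genuine hole.

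The paper bypasses the pressure-oscillation problem entirely by exploiting helical symmetry \emph{beyond} mere periodicity. The identity $\kappa\,\partial_z u^r=\partial_\theta u^r$ (Lemma~\ref{Leheliclfl}) combined with incompressibility gives $\int_0^1 r u^r\,dz=0$ for \emph{every} fixed $(r,\theta)$, not just on average in $\theta$. One can therefore, for each $\theta$, apply the Bogovskii map on the \emph{unit} square $D_R=(R-1,R)\times(0,1)$ to write $ru^r=\partial_r\Psi^r_{R,\theta}+\partial_z\Psi^z_{R,\theta}$ with $\Psi_{R,\theta}\in H^1_0(D_R)$ and an $R$-independent constant. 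This turns $\int P\,\Bu\cdot\na\varphi_R$ into $\int(\partial_r P\,\Psi^r+\partial_z P\,\Psi^z)$, and $\partial_r P,\partial_z P$ are read off directly from the momentum equations. The $\partial_\theta$-terms that appear are again converted to $\partial_z$-terms via $\kappa\,\partial_z=\partial_\theta$. The upshot is the clean inequality $Y(R)\le C_1 Y'(R)+C_2 R^{1/2}[Y'(R)]^{1/2}$ (with a unit-width shell), which is weaker than your target but still forces exponential growth and hence $\na\Bu\equiv0$. I would abandon the head-pressure maximum-principle route and use this Bogovskii construction instead; the pointwise-in-$(r,\theta)$ vanishing of $\int_0^1 ru^r\,dz$ is the key structural fact you are not yet using.
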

	As we mentioned  before,  helically symmetric flows can be regarded as flows in a slab with periodic boundary conditions. In the slab domain,  the solvability and asymptotic behavior for Navier-Stokes system  with Dirichlet boundary conditions were studied in \cite{NAKPJMFM1999,KPsbm02,KPSNAA10}.   For the Liouville-type theorem, when  supplemented with  no-slip boundary conditions, it was  first proved in \cite{CPZJFA20,CPZZARMA20} that the D-solution must be zero. The axisymmetric D-solution with periodic  boundary conditions was also proved to be trivial. It should be mentioned that this result was improved in \cite{PJMP21} under the assumption that only the  swirl  velocity  is axisymmetric. The same result for solutions with full slip boundary conditions was also proved in \cite{PJMP21}. In addition, the condition \eqref{eqDs} was relaxed  in  \cite{TT21}  and  some Liouville-type results were established under local integrability condition of $\Bu$. Very recently,    the Liouville-type theorems for bounded solutions of  the Navier-Stokes system \eqref{eqsteadyns} with either no-slip boundary conditions or periodic boundary conditions were  studied in \cite{aBGWX}, where it was proved that any bounded axisymmetric solution is trivial. More generally, the Liouville theorem also holds if either $u^r$ or $u^\theta$ is axisymmetric. In particular, when the flows are  supplemented with no-slip boundary conditions on the slab boundary, they also proved that if the velocity is not big in $L^{\infty}$ space, the general three-dimensional solutions must be Poiseuille flows. The method in \cite{aBGWX} was also used to study the Liouville-type theorem for  Taylor-Couette flows (cf. \cite{KTWarXi23}).
	
	In this paper, we consider the problem  in the slab $\Omega=\mathbb{R}^2\times(0,1)$, equipped with  Navier boundary conditions  (cf. \cite{NMARSI27})
	\begin{equation}\label{eqNavierbou}
		\Bu\cdot\Bn=0, \ \ \ \left(\Bn\cdot\D(\Bu)+\alpha\Bu\right)
		\cdot \Bt=0,
		\quad  \text{at} \ x_3=0\, \,\text{and} \,\,1,
	\end{equation}
	where $\D(\Bu)$  is the strain tensor defined by
	$$ \D(\Bu)_{i,j}=
	(\partial_{x_j}u^{i}+\partial_{x_i}u^{j})/2,
	$$
	and $\alpha \geq  0$ is the friction coefficient which measures the tendency of a fluid over the boundary, $\Bt$ and $\Bn$ are the unit tangent and outer normal vector on the boundary $\partial \Omega$, respectively. If $\alpha=0$,
	the boundary conditions \eqref{eqNavierbou} are  called the full (total) slip boundary conditions. If $\alpha \to +\infty$, the boundary conditions \eqref{eqNavierbou} formally reduce to the classical no-slip boundary conditions.
	In the standard Cartesian coordinates framework $\left\{\Be_1,\, \Be_2,\, \Be_3\right\}$, let $\Bu=u^{1} \Be_1+u^{2}\Be_2
	+u^{3}\Be_3$, the  Navier boundary conditions \eqref{eqNavierbou} on the slab boundaries become
	\begin{equation}\label{bounNaviersl1}
		u^{3}=0,   \ \ \
		\mp\dfrac{\partial_{x_{3}}u^{1}}{2}+\alpha u^{1}=0, \ \ \
		\mp\dfrac{\partial_{x_{3}}u^{2}}{2}+\alpha u^{2}=0,  
		\quad \text{at} \ x_{3}=0\, \,\text{and} \,\,1.
	\end{equation}
	
	The well-posedness of the non-stationary Navier-Stokes system supplemented with  Navier boundary conditions were studied in \cite{MRLARMA12,HBV07Math,JPKSIAM06, LMNPSIAM05,XXCPAM07}. For the stationary Navier-Stokes system, in the case the friction coefficient $\alpha=0$, i.e., the full slip boundary case,   the existence and uniqueness of very-weak solution in appropriate Banach spaces for the three-dimensional Navier-Stokes system in the flat boundary were studied in \cite{BDCDS10}. Later on, the existence and uniqueness of weak and strong solutions for stationary Stokes and Navier-Stokes system in Sobolev spaces were investigated in \cite{TACGJDE21}, when the positive friction function $\alpha$ admits minimal regularity. For more references on the well-posedness of the stationary Stokes or Navier-Stokes system  prescribed by Navier boundary conditions, one may refer to \cite{ARJDE14,HBV04ADE,ARARMA11,SWX} and references therein.
	
	In terms of the  cylindrical coordinates $(r,\theta,z)$, which are defined as follows
	\begin{equation}\label{eq6}
		\Bx=(x_{1}, x_{2}, x_{3})
		=(r\cos\theta, r\sin\theta,z),
	\end{equation}
	one can rewrite the  velocity $\Bu$ as
	\begin{equation}\label{eq7}
		\Bu=u^{r}(r,\theta,z)\Be_{r}
		+u^{\theta}(r,\theta,z)\Be_{\theta}
		+u^{z}(r,\theta,z)\Be_{z},
	\end{equation}
	where $u^{r}, u^{\theta}, u^{z}$ are called radial, swirl and axial velocity, respectively, with
	\[
	\Be_{r}=(\cos\theta, \sin\theta, 0), \ \ \ \Be_{\theta}=(-\sin\theta, \cos\theta, 0) \ \ \ \text{and} \ \ \
	\Be_{z}=(0, 0, 1).
	\]	
	With the aid of cylindrical coordinates, the Navier-Stokes system \eqref{eqsteadyns} and Navier boundary conditions \eqref{eqNavierbou}  can be written as
	\begin{equation}\label{eqA8}
		\left\{
		\begin{aligned}
			&\left(u^r \partial_r + \frac{u^\theta}{r}  \partial_\theta + u^z \partial_z\right)u^r - \frac{(u^\theta)^2}{r} + \frac{2}{r^2} \partial_\theta u^\theta + \partial_r P = \left(\Delta_{r,\theta, z} - \frac{1}{r^2} \right) u^r,\\
			&\left(u^r \partial_r + \frac{u^\theta }{r}  \partial_\theta + u^z \partial_z\right)u^\theta +
			\frac{u^\theta u^r}{r} - \frac{2}{r^2} \partial_\theta u^r + \frac{1}{r} \partial_\theta P=  \left(\Delta_{r,\theta, z} - \frac{1}{r^2} \right) u^\theta,\\
			&\left(u^r \partial_r + \frac{u^\theta}{r} \partial_\theta + u^z \partial_z\right)u^z + \partial_z P =\Delta_{r,\theta, z}u^z,\\
			&\partial_r u^r +\frac{1}{r}\partial_\theta u^\theta+\partial_z u^z +\frac{u^r}{r}=0
		\end{aligned}
		\right.
	\end{equation}
	and
	\begin{equation}\label{eqA10}
		u^{z}=0,   \ \ \
		\mp\dfrac{\partial_{z}u^{r}}{2}+\alpha u^{r}=0, \ \ \
		\mp\dfrac{\partial_{z}u^{\theta}}{2}+\alpha u^{\theta}=0,  \quad 
		\text{at} \ z=0\, \, \text{and} \,\,1,
	\end{equation}
	respectively,
	where
	\[
	\Delta_{r,\theta,z}
	=\partial^{2}_{r}+\dfrac{1}{r}\partial_{r}
	+\dfrac{1}{r^{2}}\partial^{2}_{\theta}
	+\partial^{2}_{z}.
	\]
	
	If $u^{r}\left(u^{\theta}, u^{z}, \text{respectively}\right)$ does not depend on $\theta$, we say $u^{r}\left(u^{\theta}, u^{z},\text{respectively}\right)$ is axisymmetric. Similarly, if the  vector $\left(u^{r}, u^{\theta}, u^{z}\right)$ does not depend on $\theta$, we say that the flow is  axisymmetric.
	Our second main result is the following Liouville-type theorem for  flows  in a slab with Navier boundary conditions \eqref{eqNavierbou}.
	\begin{theorem}\label{th:02}
		Let $\Bu$ be a bounded smooth   solution to the  Navier-Stokes system \eqref{eqsteadyns} in  the slab $\Omega=\mathbb{R}^{2}\times (0,1)$ supplemented with Navier boundary conditions \eqref{eqNavierbou}. The following statements hold.
		\begin{enumerate}
			\item[(i)] $\Bu\equiv 0$ if one of the following conditions holds:
			\begin{enumerate}
				\item[(a)] $u^{\theta}$  or $u^{r}$ is axisymmetric,
				
				\item[(b)] $ru^r$ uniformly  converges to $0$, as $r\rightarrow +\infty$;
			\end{enumerate}
		\end{enumerate}
		\begin{enumerate}
			\item[(ii)]if  $\alpha=0$ and $\|\Bu\|_{L^{\infty}(\Omega)}<\pi$, $\Bu$ must be a constant vector of the form $\left(C_{1}, C_{2}, 0\right)$;
			\item[(iii)]if  $\alpha>0$ and $	\|\Bu\|_{L^{\infty}(\Omega)}<
			\min\left\{\dfrac{\alpha}{2}, \, \dfrac{1}{4}\right\}$, $\Bu$ must be a Poiseuille type flow of the  form  $\left(C_{3}\left(-2\alpha x^{2}_{3}+2\alpha x_{3}+1\right), C_{4}\left(-2\alpha x^{2}_{3}+2\alpha x_{3}+1\right), 0\right)$.
		\end{enumerate}
	\end{theorem}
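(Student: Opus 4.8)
The plan is to prove all three parts through a single mechanism: a Saint-Venant type estimate for the local Dirichlet integral
$$D(R) = \int_{\Omega_R}|\nabla\Bu|^2\,d\Bx, \qquad \Omega_R := \{(x_1,x_2,x_3)\in\Omega : x_1^2+x_2^2<R^2\},$$
whose admissible growth in $R$ is shown to be incompatible with $\Bu$ being bounded and nontrivial. I would first derive the basic energy identity by testing the momentum equation in \eqref{eqsteadyns} with $\Bu$ over $\Omega_R$. The divergence-free condition removes the interior contribution of the convective and pressure terms, while integrating the viscous term by parts turns $-\int_{\Omega_R}\Delta\Bu\cdot\Bu$ into $2\int_{\Omega_R}|\D(\Bu)|^2$ together with boundary integrals. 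On the horizontal faces $\Gamma_R$ the Navier conditions \eqref{bounNaviersl1} give $\Bu\cdot\Bn=0$ and $(\Bn\cdot\D(\Bu))\cdot\Bt=-\alpha\,\Bu\cdot\Bt$, so these faces contribute the favorable dissipative term $2\alpha\int_{\Gamma_R}|\Bu|^2$; only the lateral cylinder $S_R=\{x_1^2+x_2^2=R^2\}\cap\Omega$ survives, carrying the momentum flux $\tfrac12|\Bu|^2u^r$, the pressure work $Pu^r$, and the viscous flux $(\D(\Bu)\Be_r)\cdot\Bu$. Using Korn's inequality on the slab to return from $\int|\D(\Bu)|^2$ to $\int|\nabla\Bu|^2$, this produces
$$D(R) + 2\alpha\int_{\Gamma_R}|\Bu|^2\,dS \;\lesssim\; \int_{S_R}\Big[(\D(\Bu)\Be_r)\cdot\Bu - \tfrac12|\Bu|^2u^r - P u^r\Big]\,dS.$$

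The delicate point is the pressure work $\int_{S_R}Pu^r$. I would control $P$ through the equation $\Delta P=-\partial_i\partial_j(u^iu^j)$, normalizing $P$ by its cross-sectional mean and estimating $\|P-\langle P\rangle\|$ on $S_R$ by the local Dirichlet energy via Bogovskii and elliptic estimates, the boundedness of $\Bu$ controlling the remaining integrands pointwise. Bounding the lateral trace integrals by $D'(R)$ through Cauchy–Schwarz and a trace inequality then yields a closed differential inequality of Saint-Venant form, schematically $D(R)\le C\,R^{1/2}[D'(R)]^{1/2}+(\text{lower order})$, from which one extracts the dichotomy that either $D\equiv 0$ or $D(R)$ grows at least like a fixed positive power of $R$. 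For part (i) the structural hypotheses enter precisely here: when $u^\theta$ or $u^r$ is axisymmetric the angular part of the flux integrals drops out, and when $ru^r\to0$ the momentum and pressure fluxes through $S_R$ vanish in the limit; either way the growing alternative is excluded and $D\equiv 0$, so $\Bu$ is a constant vector. The boundary condition $u^3=0$ kills its third component, and since a nonzero horizontal constant $(C_1,C_2,0)$ has $u^r=C_1\cos\theta+C_2\sin\theta$, which is neither axisymmetric nor compatible with $ru^r\to0$, the hypotheses force $C_1=C_2=0$ and hence $\Bu\equiv 0$.

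For parts (ii) and (iii) there is no structural hypothesis, and the enemy is the nonlinear flux, which I would defeat with the smallness of $\|\Bu\|_{L^\infty}$ together with the sharp Poincar\'e inequality $\int_0^1|f|^2\,dx_3\le \pi^{-2}\int_0^1|\partial_{x_3}f|^2\,dx_3$, valid for $u^3$ (which vanishes on both faces) and for the zero-vertical-mean fluctuations of $u^1,u^2$. The threshold $\|\Bu\|_{L^\infty}<\pi$ (respectively $<\min\{\alpha/2,1/4\}$) is exactly what makes the factor $\|\Bu\|_{L^\infty}/\pi<1$ absorb the convective contribution into the dissipation, so the Saint-Venant estimate now forces the horizontal variation of $\Bu$ to carry zero energy, giving $\Bu=\Bu(x_3)$ and $u^3\equiv0$. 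Substituting $\Bu=(u^1(x_3),u^2(x_3),0)$ into \eqref{eqsteadyns} reduces it to $-\partial_{x_3}^2 u^i=-\partial_{x_i}P$ with a constant horizontal pressure gradient; integrating this ordinary differential equation against the Navier conditions \eqref{bounNaviersl1} yields, for $\alpha=0$, a vanishing gradient $u^i\equiv C_i$ and hence $(C_1,C_2,0)$, and for $\alpha>0$, the parabolic profile proportional to $-2\alpha x_3^2+2\alpha x_3+1$.

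The main obstacle I anticipate is twofold: obtaining a quantitatively sharp pressure estimate on the lateral boundary $S_R$ that is compatible with merely bounded (not decaying) velocity, and tracking the best constants in the Poincar\'e, Korn, and trace inequalities precisely enough that the smallness thresholds $\pi$, $\alpha/2$, and $1/4$ actually suffice to absorb the nonlinear term. The structural reductions in part (i) and the sharpness of the constants in parts (ii) and (iii) are what make the otherwise borderline Saint-Venant inequality close.
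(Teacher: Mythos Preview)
Your overall Saint-Venant framework is the right one, but two of the key mechanisms you describe are not the ones that actually close the argument, and in part (iii) your proposed route does not lead to the stated threshold.

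\textbf{Case (i).} The reason the estimate closes when $u^\theta$ (or $u^r$) is axisymmetric is \emph{not} that ``the angular part of the flux integrals drops out.'' The crucial observation is that axisymmetry of $u^\theta$, combined with the divergence-free condition and $u^z|_{z=0,1}=0$, forces $\int_0^1 r u^r\,dz\equiv 0$; this yields a Poincar\'e inequality $\|u^r\|_{L^2(\OR)}\le C\|\partial_z u^r\|_{L^2(\OR)}$ for $u^r$ alone. That is what allows the flux terms on the right to be bounded by $CR^{1/2}\|\nabla\Bu\|_{L^2(\OR)}$. The pressure is then handled not through the Poisson equation $\Delta P=-\partial_i\partial_j(u^iu^j)$ but by a Bogovskii construction: one solves $\partial_r\Psi^r+\partial_z\Psi^z=ru^r$ on the unit square $D_R$, integrates $\int Pu^r$ by parts onto $\partial_r P,\partial_z P$, and substitutes the momentum equations for these. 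Your proposed elliptic pressure estimate on the lateral surface $S_R$ with merely bounded data is exactly the kind of borderline estimate that this Bogovskii detour is designed to avoid.

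\textbf{Cases (ii) and (iii).} Here the paper does \emph{not} test with $\Bu$ and invoke a sharp Poincar\'e constant for zero-mean fluctuations. Instead, for $\alpha>0$ one differentiates the system in $x_1,x_2$ and runs the Saint-Venant argument for the second-order energy
\[
X(R)=\int_\Omega\varphi_R\bigl(|\nabla\partial_{x_1}\Bu|^2+|\nabla\partial_{x_2}\Bu|^2\bigr)\,d\Bx
+2\alpha\int_{\partial\Omega}\varphi_R\bigl(|\partial_{x_1}\Bu|^2+|\partial_{x_2}\Bu|^2\bigr)\,dS.
\]
The point is that $\partial_{x_i}\Bu$ ($i=1,2$) inherits the Navier conditions \eqref{bounNaviersl1}, so the boundary dissipation $2\alpha\int|\partial_{x_i}\Bu|^2$ is available. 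The cubic term is controlled via the elementary Poincar\'e-type bound
$\|\partial_{x_i}\Bu\|_{L^2(\Omega)}\le 2\bigl(\|\partial_{x_3}\partial_{x_i}\Bu\|_{L^2(\Omega)}+\|\partial_{x_i}\Bu\|_{L^2(\partial\Omega)}\bigr)$,
which produces the factor $4\|\Bu\|_{L^\infty}$ in front of $X(R)$; absorption into the left side requires $4\|\Bu\|_{L^\infty}<1$ and $4\|\Bu\|_{L^\infty}<2\alpha$, whence exactly the threshold $\min\{1/4,\alpha/2\}$. Your direct approach with $\Bu$ and Wirtinger-type constants does not see the $\alpha$-dependence and will not recover these numbers. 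For $\alpha=0$ the paper takes a completely different route: even/odd extension in $x_3$ turns the full-slip problem into a periodic one of period $2$, and then the periodic Liouville theorem with threshold $2\pi$ gives the halved threshold $\pi$ here.
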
	
	In fact, we can even prove the following Liouville-type theorem for axisymmetric solutions when the velocity fields have sublinear growth at far field.
	\begin{theorem}\label{th:03}
		If the friction coefficient $\alpha>0$ and the solution $\Bu$ is axisymmetric, then $\Bu\equiv 0$ provided that
		\begin{equation} \label{growth-coro1}
			\lim_{R\rightarrow + \infty} R^{-1}  \sup_{z \in [0, 1]} |\Bu (R, z)|        =0.
		\end{equation}
	\end{theorem}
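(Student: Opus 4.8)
The plan is to establish a Saint-Venant type estimate for the truncated Dirichlet integral and then to use the sublinear growth \eqref{growth-coro1} to force that integral to vanish. Writing the system in the cylindrical variables $(r,z)$ as in \eqref{eqA8}, axisymmetry removes all $\theta$-dependence, so the problem becomes effectively two-dimensional on the half-strip $\{(r,z):r>0,\ 0<z<1\}$. Set $\Omega_R=\{\Bx\in\Omega:r<R\}$ and let $D(R)=\int_{\Omega_R}|\nabla\Bu|^2\,d\Bx$. The goal is to prove $D(R)\equiv 0$. Once this is known, $\Bu$ is a constant Cartesian vector on the connected domain $\Omega$; but a constant vector $(a,b,c)$ has $u^r=a\cos\theta+b\sin\theta$ and $u^\theta=-a\sin\theta+b\cos\theta$, so axisymmetry forces $a=b=0$, i.e. $\Bu=c\Be_3$, and the no-penetration condition $u^z=0$ on $z=0,1$ from \eqref{eqA10} then gives $c=0$, hence $\Bu\equiv 0$. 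Throughout I would keep the swirl $\Gamma=ru^\theta$ in mind, since it is exactly the friction $\alpha>0$ that will generate a favorably signed boundary contribution and exclude the Poiseuille alternative that survives when $\alpha=0$ (cf. Theorem \ref{th:02}(ii)).

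The first step is the energy identity. I would multiply the momentum equation in \eqref{eqsteadyns} by $\Bu\,\zeta_R$, where $\zeta_R=\zeta_R(r)$ is a smooth cutoff equal to $1$ for $r\le R$ and $0$ for $r\ge 2R$, and integrate over $\Omega$. The convection and pressure terms are rewritten using $\nabla\cdot\Bu=0$ together with $u^z=0$ on $z=0,1$, while integration by parts in the viscous term combined with the tangential Navier conditions \eqref{eqA10} produces the crucial coercive boundary term $2\alpha\int_{\partial\Omega}|\Bu|^2\zeta_R\,dS\ge 0$. The result is an identity of the form
\[
\begin{aligned}
\int_{\Omega}|\nabla\Bu|^2\zeta_R\,d\Bx+2\alpha\int_{\partial\Omega}|\Bu|^2\zeta_R\,dS
&=-\int_{\Omega}\nabla\Bu:(\Bu\otimes\nabla\zeta_R)\,d\Bx\\
&\quad-\int_{\Omega}\Big(\tfrac{1}{2}|\Bu|^2+P\Big)\,\Bu\cdot\nabla\zeta_R\,d\Bx,
\end{aligned}
\]
in which every term on the right is supported in the annular region $A_R=\{R\le r\le 2R\}$, where $|\nabla\zeta_R|\lesssim R^{-1}$.

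The heart of the argument is to bound the three flux integrals over $A_R$ and to convert the identity into a differential inequality for $D$. Writing $M(R)=\sup_{z\in[0,1]}|\Bu(R,z)|$, hypothesis \eqref{growth-coro1} reads $M(R)=o(R)$. The viscous flux is handled by Cauchy–Schwarz, giving a bound $\lesssim M(2R)\,(D(2R)-D(R))^{1/2}$ after using $|A_R|\sim R^2$. The genuinely delicate terms are the cubic kinetic flux and the pressure flux: a crude use of $M(R)$ alone loses too much, so I would control the head pressure $\Phi=P+\tfrac12|\Bu|^2$, estimating its cross-sectional oscillation on the annulus by the local Dirichlet integral together with the growth bound $M$, so that the kinetic-plus-pressure flux is absorbed into a term of the same structure as the viscous flux. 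Concentrating the cutoff near $r=R$ and writing $D'(R)=\int_{\{r=R\}}|\nabla\Bu|^2\,dS$, this yields a Saint-Venant inequality of the schematic form $D(R)\le C\,G(R)\,(D'(R))^{1/2}$, with a weight $G(R)$ governed by $M(R)$ and the geometry of the cross-section.

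Finally I would run the standard comparison argument. If $D\not\equiv 0$, then $D(R_0)>0$ for some $R_0$; the Saint-Venant inequality then forces $D(R)$ to grow at a definite rate as $R\to\infty$, which is incompatible with the upper bound that the same energy identity places on $D(R)$ under the sublinear growth \eqref{growth-coro1}. Hence $D\equiv 0$, so $\nabla\Bu\equiv 0$, and the reduction above gives $\Bu\equiv 0$. The main obstacle is precisely the flux-estimate step: unlike Theorem \ref{th:02}, where $\Bu\in L^\infty$ furnishes uniform control of the cubic and pressure fluxes, here one has only the sublinear bound \eqref{growth-coro1}, so the pressure must be treated quantitatively (through the head pressure and an annular pressure estimate) and the nonlinear flux must be interpolated against the Dirichlet integral with the sharp power of $M(R)$. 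Calibrating these weights so that $o(R)$ growth is exactly enough to close the differential inequality—while keeping the coercive role of $\alpha>0$ to rule out the Poiseuille solutions—is the crux of the argument.
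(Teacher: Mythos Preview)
Your overall skeleton (energy identity with radial cut-off, Saint-Venant differential inequality, growth contradiction) is the paper's strategy, but three of the load-bearing steps are either absent or replaced by placeholders that do not close. First, the pressure flux is not handled via the head pressure in the paper; instead, axisymmetry together with $u^z|_{z=0,1}=0$ gives $\int_0^1 ru^r\,dz=0$, so by Lemma~\ref{Bogovskii} one constructs $\Ps_R\in H^1_0(D_R;\mathbb{R}^2)$ with $\partial_r\Psi_R^r+\partial_z\Psi_R^z=ru^r$, integrates by parts, and then substitutes the momentum equations \eqref{eqA38} for $(\partial_rP,\partial_zP)$ so that no bare pressure survives. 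Your ``cross-sectional oscillation of $\Phi$'' idea gives no concrete mechanism to bound $\int_{\OR}\Phi\,u^r\,d\Bx$ by $\|\nabla\Bu\|_{L^2(\OR)}$ with the correct power of $R$. Second, the same zero-mean identity yields the Poincar\'e inequality $\|u^r\|_{L^2(\OR)}\le C\|\partial_z u^r\|_{L^2(\OR)}$, which is precisely what converts the cubic kinetic flux into $CR^{1/2}\|\Bu\|_{L^\infty(\OR)}^2\|\nabla\Bu\|_{L^2(\OR)}$; you do not invoke it, and without it the nonlinear flux is one power of $M(R)$ too large to close under $M(R)=o(R)$.

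Third, and most structurally, the paper's argument is genuinely two-tiered and your sketch collapses the tiers. One first proves an \emph{unconditional} estimate (Lemma~\ref{Le:51h}) of the form $Z(R)\le C_1Z'(R)+C_2R^{-1/2}[Z'(R)]^{3/2}$ for the combined quantity $Z(R)=\int_{\Omega_R}|\nabla\Bu|^2\,d\Bx+2\alpha\int_{\partial\Omega_R\cap\partial\Omega}|\Bu|^2\,dS$; here $\alpha>0$ is essential because the nonlinear terms are controlled through $\|\Bu\|_{L^4(D_R)}\le CR^{-1/2}\bigl(\|\nabla\Bu\|_{L^2(\OR)}+\|\Bu\|_{L^2(\partial\OR\cap\partial\Omega)}\bigr)$ and the boundary trace is absorbed into $Z'(R)$ only via the $2\alpha$-weighted boundary integral. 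By Lemma~\ref{le:differineq}(b) this forces $\varliminf R^{-4}Z(R)>0$ whenever $\Bu\not\equiv 0$. Only then does one re-estimate the same identity with $\|\Bu\|_{L^\infty(\OR)}$ in place of Sobolev to get $Z(R)\le CR^{1/2}\bigl(\|\Bu\|_{L^\infty(\OR)}+\|\Bu\|_{L^\infty(\OR)}^2\bigr)[Z'(R)]^{1/2}$, which under \eqref{growth-coro1} integrates (using that $Z$ is unbounded, supplied by the first step) to $R^{-4}Z(R)\le C\epsilon^2$ for every $\epsilon>0$, contradicting the first step. Your single inequality $D(R)\le CG(R)(D'(R))^{1/2}$ cannot play both roles, and the sentence ``the Saint-Venant inequality forces $D(R)$ to grow \ldots\ which is incompatible with the upper bound that the \emph{same} energy identity places on $D(R)$'' is circular as written. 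Note also that the correct quantity to track is $Z(R)$, not the bare Dirichlet integral $D(R)$: dropping the $2\alpha$ boundary term on the left discards exactly the coercivity that makes Lemma~\ref{Le:51h} work. Your final reduction from $\nabla\Bu\equiv 0$ to $\Bu\equiv 0$ is correct.
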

	Now we outline the key point of the proof.   The first key observation of this paper is that periodic property and helical identities \eqref{dhelicdef} together give some nice estimates. Inspired by the work \cite{aBGWX}, making  use of  the structure of the steady Navier-Stokes system,  Bogovskii map and helical identities, we prove the Saint-Venant type estimate  for the Dirichlet integral of $\Bu$ over a finite subdomain. The Saint-Venant's principle which dates back to  \cite{JKKTARMA66,RATARMA65} was originally used to study the solutions for elastic equations.   This idea  was generalized in  \cite{OYASN77} to  investigate the  second order elliptic equations, where the uniqueness and existence  of solutions in unbounded domains were obtained. Later on, it was applied in  \cite{LSZNSL80} to study the the famous Leray problem, i.e., the well-posedness of the steady Stokes and Navier-Stokes system in domains with unbounded boundaries. Furthermore. when the flows in a slab satisfy the full slip boundary conditions, it can be regarded as a flow periodic in one direction after suitable extension. When the flows in a slab satisfy the Navier boundary conditions with positive friction coefficient, then one can combine the Poincar\'e inequality for $u^{z}$ together with divergence free property of velocity field to show  Poincar\'e type inequality  for the whole velocity field.
	
	The rest of this paper is organized as follows.  In Section \ref{Sec2}, we introduce some notations and collect some elementary lemmas which are used in this paper. The Liouville-type theorem for helically symmetric flows in $\mathbb{R}^{3}$ is presented in Section \ref{Sec3}. In Section \ref{Sec4}, we  study the flows in a slab  with full slip boundary conditions via periodic extension. In Section \ref{Sec5}, the Liouville-type theorems for  the flows with Navier boundary conditions with $\alpha>0$ are investigated. More precisely,  we consider the bounded flows with only  swirl axisymmetry  or  radial axisymmetry in Section \ref{Sec51}. Section \ref{Sec52} is devoted to the proof of Liouville-type theorem for  general  bounded flows with $ru^{r}$ decaying to zero.
	In Section \ref{Sec54}, we consider the flows  whose $L^\infty$ norm are not big. In Section \ref{Sec6},  we  prove the Liouville-type theorem for  axisymmetric solutions with sublinear growth. The  proof for the regularity of  bounded solutions to Navier-Stokes system with Navier boundary conditions is presented in detail in the appendix.

	\section{Preliminaries}\label{Sec2}
	In this section, we give some preliminaries. First, we introduce the following notations. Assume that $\Omega$ is a bounded domain, define
	\[
	L^p_0(\Omega)=\left\{g: \ \ g\in L^p(\Omega), \ \ \int_\Omega g \, d\Bx =0 \right\}.
	\]
	For any $R\geq 2$, denote
	$D_R = (R-1, R)\times (0, 1)$, $\mathcal{D}_R =(R-1, R)\times(0, 2\pi)\times(0, 1)$,  $\Omega_R = B_R \times (0, 1)$ and $\OR = (B_R \setminus \overline{B_{R-1}}) \times (0, 1)$, where $B_R= \{(x_1,x_2)\in \mathbb{R}^2: x_1^2+x_2^2 <R^2\}$.
	For any $\Bx\in \mathbb{R}^3$, define $\mathscr{B}_r(\Bx)=\{\By\in \mathbb{R}^3: |\By-\Bx|<r\}$.
	In the rest of the paper, the  cut-off function $\varphi_R(r)$ is defined as following
	\be \label{cut-off}
	\varphi_R(r) = \left\{ \ba
	&1,\ \ \ \ \ \ \ \ \ \ r < R-1, \\
	&R-r,\ \ \ \ R-1 \leq r \leq R, \\
	&0, \ \ \ \ \ \ \ \ \ \ r > R.
	\ea  \right.
	\ee
	
	Subsequently, we introduce the Bogovskii map, which gives a solution to the divergence equation. The proof is due to  Bogovskii \cite{B79DANS}, see also  \cite[Section III.3]{GAGP11}  and  \cite[Section 2.8]{TT18}.
	\begin{lemma}(\hspace{1sp}\cite[Lemma 2.1]{aBGWX}, \cite[Theorem 1.2]{BN22})\label{Bogovskii}
		Let $\Omega$ be a bounded Lipschitz domain in $\mathbb{R}^n$ with $n\geq 2 $.  For any $q\in (1, \infty)$, there is a linear map $\boldsymbol{\Phi}$ that maps a scalar function $g\in L^q_0(\Omega)$ to a vector field $\bBV = \boldsymbol{\Phi} g \in W_0^{1, q}(\Omega; \mathbb{R}^n)$ satisfying
		\be \nonumber
		{\rm div}~\bBV = g \ \text{in}\ \Omega \quad \text{and} \quad \|\bBV\|_{W^{1, q}(\Omega)} \leq C (\Omega, q) \|g\|_{L^q(\Omega)}.
		\ee
		Moreover, if $\Omega$ is a bounded $C^{k, 1}$ domain in $\mathbb{R}^n$ and $g\in W^{k, q}(\Omega)$, $k\in \mathbb{N}$,  it holds that
		\be \nonumber
		\|\bBV\|_{W^{k+1, q}(\Omega)} \leq C (\Omega, k, q) \|g\|_{W^{k, q}(\Omega)}.
		\ee
		In particular,
		\begin{enumerate}
			\item    For any $g \in L^2_0(D_R)$,
			the  vector valued function $\bBV = \boldsymbol{\Phi} g \in H_0^1(D_R; \mathbb{R}^2)$ satisfies
			\be \nonumber
			\partial_r V^r + \partial_z V^z =g \ \  \mbox{in}\,\, D_R
			\quad
			\text{and}
			\quad
			\|\tilde{\nabla } \bBV\|_{L^2(D_R)}
			\leq C \|g\|_{L^2(D_R)},
			\ee
			where $\tilde{\nabla } = (\partial_r, \ \partial_z ) $ and $C$ is a constant independent of $R$.
			
			\item  For any $g \in L^2(\mathcal{D}_R)$,
			the   vector valued function $\bBV = \boldsymbol{\Phi} g \in H_0^1( \mathcal{D}_R; \mathbb{R}^3)$ satisfies
			\be \nonumber
			\partial_r V^r + \partial_\theta V^\theta +  \partial_z V^z =g \ \   \mbox{in}\,\, \mathcal{D}_R
			\quad
			\text{and}
			\quad
			\|\bar{\nabla } \bBV\|_{L^2(\mathcal{D}_R)}
			\leq C \|g\|_{L^2( \mathcal{D}_R)},
			\ee
			where $\bar{\nabla} = (\partial_r, \ \partial_\theta, \ \partial_z ) $ and $C$ is a constant independent of $R$.
		\end{enumerate}
	\end{lemma}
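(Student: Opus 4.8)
The plan is to reduce the whole statement to the classical Bogovskii construction and then extract the two scale-uniform corollaries by a pure translation argument. First I would treat the model situation in which $\Omega$ is star-shaped with respect to an interior ball $B$. Fixing a kernel $\omega \in C_c^\infty(B)$ with $\int_\Omega \omega\, d\By = 1$, I define the explicit, manifestly linear operator
$$ (\boldsymbol{\Phi} g)(\Bx) = \int_\Omega g(\By) \left[ \frac{\Bx - \By}{|\Bx - \By|^n} \int_{|\Bx - \By|}^\infty \omega\!\left(\By + \xi \frac{\Bx - \By}{|\Bx - \By|}\right) \xi^{n-1}\, d\xi \right] d\By. $$
Differentiating under the integral sign gives $\nabla \cdot (\boldsymbol{\Phi} g) = g$ pointwise, and the mean-zero hypothesis $g \in L^q_0(\Omega)$ together with the compact support of $\omega$ forces $\boldsymbol{\Phi} g \in W_0^{1,q}$. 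The $W^{1,q}$ bound is the analytic core: the gradient of the kernel splits into a weakly singular part, estimated by Young's inequality, and a Calder\'on--Zygmund singular kernel of convolution type, whose $L^q \to L^q$ boundedness for $1 < q < \infty$ yields $\|\boldsymbol{\Phi} g\|_{W^{1,q}} \le C(\Omega, q)\|g\|_{L^q}$.

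For a general bounded Lipschitz $\Omega$ I would pass from the model to the full domain by the standard partition procedure: cover $\Omega$ by finitely many open sets on each of which $\Omega$ is star-shaped with respect to a ball, choose a subordinate partition of unity, solve a sequence of local divergence problems with mean-zero data obtained by redistributing mass through the cut-offs, and sum the local solutions. The resulting constant $C(\Omega,q)$ depends only on the number of patches, the diameters, and the star-shapedness radii, which are purely metric data. The higher-regularity statement comes from differentiating the same representation: under $C^{k,1}$ regularity the decomposition can be chosen with $C^{k,1}$ pieces, and applying the Calder\'on--Zygmund estimates to the derivatives of the kernel upgrades the bound to $\|\boldsymbol{\Phi} g\|_{W^{k+1,q}} \le C(\Omega,k,q)\|g\|_{W^{k,q}}$.

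The two scale-uniform statements (1) and (2) then become transparent once I note that the differential operators written there are the \emph{flat} divergences $\partial_r V^r + \partial_z V^z$ and $\partial_r V^r + \partial_\theta V^\theta + \partial_z V^z$, i.e. ordinary Cartesian divergences in the coordinates $(r,z)$ and $(r,\theta,z)$. Consequently $D_R = (R-1,R)\times(0,1)$ is exactly the translate of the fixed unit square $D_2 = (1,2)\times(0,1)$ by $(R-2,0)$, and similarly $\mathcal{D}_R$ is a translate along the $r$-axis of the fixed box $\mathcal{D}_2 = (1,2)\times(0,2\pi)\times(0,1)$. Writing $\tau_R$ for the translation and setting $\boldsymbol{\Phi}_R g := \big( \boldsymbol{\Phi}_{D_2}(g \circ \tau_R) \big)\circ \tau_R^{-1}$ produces the map on $D_R$ whose operator constant equals that of the reference domain, since translation is an isometry that commutes with the divergence and preserves every metric quantity entering $C$. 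This is precisely why the constant can be taken independent of $R$.

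The main obstacle I anticipate is not the algebra but making the $W^{1,q}$ estimate rigorous while, for this paper's purposes, verifying that the constant depends \emph{only} on translation-invariant data: the Calder\'on--Zygmund step must be applied to the convolution-type singular part of $\nabla(\boldsymbol{\Phi} g)$ with a constant controlled solely by the fixed reference geometry. In case (2) one must also fix the boundary behavior on the faces $\theta = 0, 2\pi$ (Dirichlet, consistent with the stated space $H_0^1$) and record the compatibility condition $\int g = 0$ that $\boldsymbol{\Phi} g \in H_0^1$ forces via the divergence theorem; because the reference box is fixed, none of these points affects the uniformity in $R$.
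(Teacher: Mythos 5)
Your proposal is correct and follows essentially the same route as the paper, which gives no independent argument but defers to the classical Bogovskii construction (\cite{B79DANS}, \cite[Section III.3]{GAGP11}, \cite[Section 2.8]{TT18}): the explicit kernel on star-shaped domains with Calder\'on--Zygmund bounds, patching over a Lipschitz domain, and the observation that $D_R$ and $\mathcal{D}_R$ are translates of fixed reference boxes, so the constants are $R$-independent. You also correctly flag the one genuine wrinkle in the statement: in case (2) the hypothesis should read $g\in L^2_0(\mathcal{D}_R)$ (the mean-zero condition is forced by $\bBV\in H_0^1$ and the divergence theorem, and is satisfied in all of the paper's applications via the identities such as \eqref{eqA117} and \eqref{eqA57}).
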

	
	Next, some  properties of the helical flows are collected in the following lemma.
	\begin{lemma}\label{Leheliclfl}
		If	a continuously differentiable vector field $\Bu=u^{r}(r, \theta, z) \Be_{r}+
		u^{\theta}(r,\theta,z)\Be_{\theta}+
		u^{z}(r,\theta,z)\Be_{z}$ is helically symmetric, then there exists a constant $\kappa \in \mathbb{R}$  such that
		\begin{equation}\label{dhelicdef}
			\kappa\partial_{z}u^{r}
			=\partial_{\theta}u^{r},
			\ \ \	\kappa\partial_{z}u^{\theta}=\partial_{\theta}u^{\theta},
			\ \ \	\kappa\partial_{z}u^{z}=\partial_{\theta}u^{z}.
		\end{equation}
	\end{lemma}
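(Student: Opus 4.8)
The plan is to reduce this vector identity to the scalar helical identity by passing to cylindrical coordinates and carefully tracking how the one-parameter group $\mathcal{G}_{\kappa}$ acts on both the base point and the moving frame $\{\Be_r,\Be_\theta,\Be_z\}$. First I would rewrite the isometry $S_\rho$ of \eqref{eqA3} in cylindrical coordinates. A direct computation with the rotation matrix $R_\rho$ shows that if $\Bx=(r\cos\theta,r\sin\theta,z)$ then $R_\rho\Bx=(r\cos(\theta-\rho),r\sin(\theta-\rho),z)$, so that $S_\rho$ sends the point with coordinates $(r,\theta,z)$ to the point with coordinates $(r,\theta-\rho,z+\kappa\rho)$: the radial coordinate is preserved, the angle is shifted by $-\rho$, and the height is shifted by $\kappa\rho$.

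Next I would record the action of $R_\rho$ on the frame itself. Since $\Be_r(\theta)=(\cos\theta,\sin\theta,0)$ and $\Be_\theta(\theta)=(-\sin\theta,\cos\theta,0)$, the same trigonometric identities give $R_\rho\Be_r(\theta)=\Be_r(\theta-\rho)$, $R_\rho\Be_\theta(\theta)=\Be_\theta(\theta-\rho)$ and $R_\rho\Be_z=\Be_z$. The crucial observation is that the image point $S_\rho(\Bx)$ carries angular coordinate $\theta-\rho$, so its \emph{own} cylindrical frame is exactly $\{\Be_r(\theta-\rho),\Be_\theta(\theta-\rho),\Be_z\}$ --- precisely the frame into which $R_\rho$ rotates the frame based at $\Bx$.

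With these two ingredients in hand, the defining relation $\Bu(S_\rho(\Bx))=R_\rho\Bu(\Bx)$ becomes an identity between two expansions in the common basis $\{\Be_r(\theta-\rho),\Be_\theta(\theta-\rho),\Be_z\}$. Expanding the left side as $u^r(S_\rho\Bx)\Be_r(\theta-\rho)+u^\theta(S_\rho\Bx)\Be_\theta(\theta-\rho)+u^z(S_\rho\Bx)\Be_z$ and the right side as $u^r(\Bx)\Be_r(\theta-\rho)+u^\theta(\Bx)\Be_\theta(\theta-\rho)+u^z(\Bx)\Be_z$, and matching coefficients, I obtain that each scalar component satisfies $u^i(r,\theta-\rho,z+\kappa\rho)=u^i(r,\theta,z)$ for $i\in\{r,\theta,z\}$ and every $\rho\in\mathbb{R}$; that is, each cylindrical component is itself a helically symmetric scalar function. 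Differentiating each of these identities in $\rho$ and evaluating at $\rho=0$ (using $\Bu\in C^1$) yields $-\partial_\theta u^i+\kappa\partial_z u^i=0$, which is exactly \eqref{dhelicdef}, with $\kappa$ equal to the pitch of the symmetry group.

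I do not expect a serious obstacle here; the entire content is the reduction to the scalar case, and the single point demanding care is the bookkeeping of the moving frame. Because $\Be_r$ and $\Be_\theta$ depend on $\theta$, it would be erroneous to match coefficients before verifying that the frame at $S_\rho(\Bx)$ coincides with the $R_\rho$-image of the frame at $\Bx$; the sign of the angular shift and the appearance of the \emph{same} constant $\kappa$ in all three identities both hinge on this alignment, so I would carry out the two frame computations explicitly before concluding.
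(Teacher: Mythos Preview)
Your argument is correct. Both you and the paper reach \eqref{dhelicdef} by first showing that each cylindrical component $u^r,u^\theta,u^z$ is itself a helically invariant scalar and then differentiating, but the routes differ in how that scalar invariance is obtained. The paper invokes a characterization from \cite{LMNLT14,KLWSIAM22} asserting that helical symmetry is equivalent to the existence of functions $v^i$ with $u^i(r,\theta,z)=v^i(r,\kappa\theta+z)$, and then differentiates this form in $\theta$ and $z$ separately. You instead work directly from the defining relation $\Bu(S_\rho(\Bx))=R_\rho\Bu(\Bx)$: by computing the action of $R_\rho$ on both the base point and the moving frame $\{\Be_r,\Be_\theta,\Be_z\}$, you derive $u^i(r,\theta-\rho,z+\kappa\rho)=u^i(r,\theta,z)$ and differentiate in the group parameter $\rho$ at $\rho=0$. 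Your approach is more self-contained (no external citation is needed) and in fact supplies a proof of the characterization \eqref{helidef} that the paper quotes; the paper's route is shorter on the page precisely because it outsources that step.
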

	\begin{proof}
		As shown in \cite[p. 846]{LMNLT14} and \cite[p. 3976]{KLWSIAM22}, a continuous vector field $\Bu$ is helically symmetric if and only if there exist $\left(v^r, v^\theta, v^z\right)$ and  a constant $\kappa \in \mathbb{R}$ such that
		\begin{equation}\label{helidef}
			u^{r}(r,\theta,z)= 	v^{r}(r,\kappa\theta+z),\ \ \
			u^{\theta}(r,\theta,z)= 	v^{\theta}(r,\kappa\theta+z),\ \ \
			u^{z}(r,\theta,z)= 	v^{z}(r,\kappa\theta+z).
		\end{equation}
		Differentiating \eqref{helidef} with respect to $\theta$ and $z$ variables  gives \eqref{dhelicdef}.  The proof of  Lemma \ref{Leheliclfl} is completed.
	\end{proof}
	For a solution of Navier-Stokes system in a slab with periodic boundary conditions, if the velocity field is $L^{\infty}$-bounded, then its gradient must also be $L^{\infty}$-bounded.
	The proof can be found in \cite[Lemma 2.3]{aBGWX},  so we omit it here.
	\begin{lemma}(\hspace{1sp}\cite[Lemma 2.3]{aBGWX})\label{le:pebu}
		Let $\Bu$ be a bounded smooth solution to the  Navier-Stokes system \eqref{eqsteadyns} in $\mathbb{R}^{2}\times \mathbb{T}$. Then $\nabla\Bu$, $\nabla^{2}\Bu$, and $\nabla P$ are uniformly bounded.
	\end{lemma}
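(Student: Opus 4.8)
The plan is to read the statement as a local elliptic bootstrap estimate whose constants are uniform in the base point, and then to patch the local bounds together. Since the slab with periodic boundary conditions is boundaryless in the $x_3$-direction, I would first extend $\Bu$ and $P$ periodically to a smooth bounded solution of \eqref{eqsteadyns} on all of $\mathbb{R}^3$; this turns every estimate into a genuine \emph{interior} estimate on Euclidean balls $\mathscr{B}_r(\Bx_0)$, with no boundary to contend with and with all constants invariant under translation of $\Bx_0$. The crucial structural observation is that, by incompressibility $\nabla\cdot\Bu=0$, the convective term is in divergence form,
$$(\Bu\cdot\nabla)\Bu=\nabla\cdot(\Bu\otimes\Bu),$$
so that the momentum equation reads as a Stokes system $-\Delta\Bu+\nabla P=-\nabla\cdot(\Bu\otimes\Bu)$, $\nabla\cdot\Bu=0$, whose forcing $\Bu\otimes\Bu$ lies in $L^\infty\subset L^q$ for every $q\in(1,\infty)$ on each unit ball, with norm controlled by $\|\Bu\|_{L^\infty}^2$ \emph{without any a priori control of derivatives}. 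This is precisely what allows the bootstrap to start from the sole hypothesis $\Bu\in L^\infty$.

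With this reformulation I would run the standard three-step bootstrap using interior Stokes estimates of Cattabriga/Agmon--Douglis--Nirenberg type, fixing $\Bx_0$ and shrinking the ball at each step. First, the divergence-form $L^q$ Stokes estimate gives $\nabla\Bu\in L^q$ and $P$ (normalized to mean zero on the ball) in $L^q$ on $\mathscr{B}_{3/2}(\Bx_0)$, for every $q\in(1,\infty)$, controlled by $\|\Bu\otimes\Bu\|_{L^q}+\|\Bu\|_{L^q}$. Second, now that $\nabla\Bu\in L^q$, the convective term $(\Bu\cdot\nabla)\Bu$ itself lies in $L^q$, being the product of the $L^\infty$ field $\Bu$ with $\nabla\Bu\in L^q$, so the $W^{2,q}$ Stokes estimate yields $\Bu\in W^{2,q}$ and $\nabla P\in L^q$ on $\mathscr{B}_{5/4}(\Bx_0)$. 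Choosing $q>3$, Morrey's embedding gives $\Bu\in C^{1,\alpha}$ with $\alpha=1-3/q$, hence $(\Bu\cdot\nabla)\Bu\in C^{0,\alpha}$. Third, the interior Schauder estimate for the Stokes system then produces $\Bu\in C^{2,\alpha}$ and $\nabla P\in C^{0,\alpha}$ on $\mathscr{B}_1(\Bx_0)$, which in particular bounds $\nabla\Bu$, $\nabla^2\Bu$, and $\nabla P$ in $L^\infty(\mathscr{B}_1(\Bx_0))$.

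The uniformity is then automatic: at every step the input norms are dominated by $\|\Bu\|_{L^\infty(\mathscr{B}_2(\Bx_0))}\le\|\Bu\|_{L^\infty}$, the ball radii are fixed, and the estimate constants depend only on the dimension, on $q$ (hence $\alpha$), and on those fixed radii, so they are independent of $\Bx_0$; taking the supremum over $\Bx_0$ (equivalently, over one horizontal period, by periodicity) gives the claimed global bounds on $\nabla\Bu$, $\nabla^2\Bu$, and $\nabla P$. Note that only $\nabla P$, not $P$ itself, is asserted to be bounded, which is consistent with the fact that the Stokes pressure is determined only up to an additive constant that may vary from ball to ball. The main point to get right is the very first step: one must exploit the divergence-form structure coming from incompressibility to avoid needing any derivative bound to initiate the iteration, after which the scheme is the routine Stokes analogue of elliptic bootstrapping. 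A secondary technical care is the precise form of the interior Stokes $L^q$ and Schauder estimates with the pressure normalized on each ball; these are classical, and I would simply cite them.
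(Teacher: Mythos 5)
Your proof is correct and follows essentially the same route as the paper's: the paper defers this lemma to \cite[Lemma 2.3]{aBGWX}, and the underlying argument (mirrored in the interior-regularity part of this paper's appendix, where Galdi's interior Stokes estimates are invoked with the forcing written in the divergence form $\nabla\cdot(\Bu\otimes\Bu)$ so that the bootstrap starts from $\Bu\in L^{\infty}$ alone) is precisely the translation-uniform local bootstrap you describe. The only cosmetic point is that ``extending $P$ periodically'' is not automatic, since a priori only $\nabla P$ is periodic (this is the content of Lemma \ref{Le:peridiPR}); but as you only ever use the pressure normalized to mean zero on each ball, your argument does not actually depend on it.
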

	
	For bounded solutions of the  Navier-Stokes system in a slab supplemented with Navier boundary conditions if the friction coefficient $\alpha>0$, one can also show that the gradient of the velocity field is   $L^{\infty}$-bounded. The detailed proof is given in the appendix.
	\begin{lemma}\label{Le:NAvbouns}
		Let $\Bu$ be a bounded smooth solution to the  Navier-Stokes system \eqref{eqsteadyns} in a slab $\Omega=\mathbb{R}^{2}\times (0,1)$ supplemented with Navier boundary conditions \eqref{eqNavierbou}, where $\alpha>0$. Then $\nabla\Bu$ is bounded.
	\end{lemma}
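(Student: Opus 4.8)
The plan is to treat the convective term as a divergence-form forcing and to bootstrap the local $L^p$-theory for the Stokes system under the boundary conditions \eqref{bounNaviersl1}. Since $\Bu$ is solenoidal, $(\Bu\cdot\nabla)\Bu=\nabla\cdot(\Bu\otimes\Bu)$, so $(\Bu,P)$ solves the Stokes system $-\Delta\Bu+\nabla P=-\nabla\cdot(\Bu\otimes\Bu)$, $\nabla\cdot\Bu=0$, in which the forcing is the divergence of the bounded tensor $-\Bu\otimes\Bu\in L^\infty(\Omega)$. Because \eqref{eqsteadyns} and the conditions \eqref{bounNaviersl1} are invariant under translations in the horizontal variables $(x_1,x_2)$, it suffices to bound $\nabla\Bu$ on a cylinder $\mathcal{Q}_1=\{\By\in\Omega:|(\By-\Bx)'|<1\}$ (here $(\cdot)'$ denotes horizontal projection) by a constant depending only on $\alpha$ and $\|\Bu\|_{L^\infty(\Omega)}$ but not on the center $\Bx$; covering $\Omega$ by such cylinders then yields $\nabla\Bu\in L^\infty(\Omega)$.

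First I would localize in the horizontal variables only. Let $\chi=\chi(x_1,x_2)$ be a cut-off supported in $\mathcal{Q}_2$ and equal to $1$ on $\mathcal{Q}_1$. Since $\chi$ is independent of $x_3$, the product $\chi\Bu$ still satisfies the homogeneous conditions \eqref{bounNaviersl1} on $\{x_3=0,1\}$; moreover $\nabla\cdot(\chi\Bu)=\Bu\cdot\nabla\chi$ is supported in $\mathcal{Q}_2$ and has zero mean there, because $\Bu\cdot\Bn=0$ on the horizontal boundaries. Lemma \ref{Bogovskii} then provides a correction $\bBV$ supported in $\mathcal{Q}_2$, with $\|\bBV\|_{W^{1,p}}\le C\|\Bu\cdot\nabla\chi\|_{L^p}\le C\|\Bu\|_{L^\infty}$, so that $\chi\Bu-\bBV$ is divergence free and solves a Stokes system whose right-hand side consists of the bounded forcing together with lower-order commutator terms in $\Bu$ and $P$. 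Applying the up-to-the-boundary $L^p$ estimate for this Stokes problem (with the pressure normalized to have zero mean on $\mathcal{Q}_2$) yields $\|\nabla\Bu\|_{L^p(\mathcal{Q}_1)}\le C$ for every $p\in(1,\infty)$, with $C$ independent of $\Bx$.

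With $\nabla\Bu\in L^p_{\mathrm{loc}}$ available uniformly for all finite $p$, the convective term $(\Bu\cdot\nabla)\Bu$ now lies in $L^p(\mathcal{Q}_1)$ uniformly, being the product of the bounded factor $\Bu$ and the $L^p$ factor $\nabla\Bu$; the Bogovskii datum $\Bu\cdot\nabla\chi$ likewise lies in $W^{1,p}$. A second application of the localized Stokes estimate, now in its $W^{2,p}$ form with the forcing kept in nondivergence form, upgrades the regularity to $\|\Bu\|_{W^{2,p}(\mathcal{Q}_{1/2})}\le C$ uniformly in $\Bx$. Choosing $p>3$ and using the Sobolev embedding $W^{2,p}\hookrightarrow C^{1,1-3/p}$ up to the horizontal boundaries gives $\nabla\Bu\in L^\infty(\mathcal{Q}_{1/2})$ with a bound depending only on $p$, $\alpha$, and $\|\Bu\|_{L^\infty(\Omega)}$; horizontal translation invariance removes the dependence on $\Bx$, and covering $\Omega$ finishes the argument.

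The main obstacle is the up-to-the-boundary $L^p$ and $W^{2,p}$ regularity for the Stokes operator under the Navier conditions \eqref{bounNaviersl1}. Unlike the no-slip case, the tangential components obey the Robin-type condition $\mp\partial_{x_3}u^i/2+\alpha u^i=0$ coupling $\partial_{x_3}u^i$ with $u^i$, while $u^3$ obeys a Dirichlet condition, so the standard flattening-and-reflection derivation of the estimate is not immediate and the friction coefficient $\alpha$ enters the constants. I would handle this either by invoking the $L^p$-solvability theory for the stationary Stokes system with Navier-type conditions available in the literature, or by reducing the Robin condition for the tangential velocity to a Neumann problem plus a lower-order perturbation absorbed in the bootstrap. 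Keeping all constants independent of the horizontal center $\Bx$, so that the local bounds patch into a single global $L^\infty$ bound, is the other point requiring care.
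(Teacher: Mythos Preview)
Your approach is correct in outline and takes a genuinely different route from the paper's proof. The paper works entirely in the $L^2$ framework: it derives local $H^1$, then $H^2$, then $H^3$ estimates for the Stokes system with Navier conditions on half-balls at the flat boundary, doing the $H^2$ step by hand with tangential difference quotients, Korn's inequality \eqref{kornin1}, and the Ne\v{c}as inequality \eqref{necas1in}, and only at the end invokes the embedding $H^3\hookrightarrow C^1$. Your route is shorter: by passing directly to the $L^p$ theory for the Stokes operator under Navier conditions you need only two bootstrap steps ($W^{1,p}$ then $W^{2,p}$, $p>3$) instead of three, at the cost of treating the boundary $L^p$ estimate as a black box from the literature rather than deriving it. The horizontal-translation-invariance argument for making the constants uniform in the center $\Bx$ is the same in spirit as the paper's use of uniformly sized half-balls.

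One point that deserves a sentence of care: the Bogovskii correction $\bBV$ produced by Lemma~\ref{Bogovskii} lies in $W_0^{1,p}$, so it has zero trace on $\{x_3=0,1\}$ but its normal derivative $\partial_{x_3}V^i$ need not vanish there. Hence $\chi\Bu-\bBV$ satisfies the homogeneous impermeability condition but an \emph{inhomogeneous} tangential Robin condition, with data $\pm\tfrac12\partial_{x_3}V^i$. This is not fatal---you can either absorb this trace term as a boundary forcing in the $L^p$ Stokes estimate (the cited theory covers inhomogeneous Navier data), or bypass the Bogovskii step altogether by applying a local-to-the-boundary estimate directly to $\Bu$ on nested half-balls, which is what the paper does---but as written the sentence ``$\chi\Bu-\bBV$ \ldots\ solves a Stokes system'' with homogeneous Navier conditions is not quite accurate.
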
	
	
	In order to apply Saint-Vernant principle, the following estimate for the differential inequalities are frequently used in this paper. The proof can be found in \cite[Lemma 2.2]{aBGWX}.
	\begin{lemma}(\hspace{1sp}\cite[Lemma 2.2]{aBGWX})\label{le:differineq}
		Let $y(t)$ be a nondecreasing nonnegative function  and $t_0>1$ be a fixed constant. Suppose that $y(t)$ is not identically zero.
		
		(a)  If $y(t)$  satisfies
		\begin{equation}\label{ineqlemma2-1}
			y(t) \leq C_1 y^{\prime}(t) + C_2 \left[ y^{\prime}(t) \right]^{\frac32} \ \ \ \text{for any}\,\, t\geq t_0,
		\end{equation}
		then
		\begin{equation} \label{qwelemma2-2}
			\varliminf_{t \rightarrow + \infty} t^{-3} y(t) > 0.
		\end{equation}
		
		(b) If $y(t)$ satisfies
		\begin{equation}\label{eqinlemma2-3}
			y(t) \leq C_3 y^{\prime}(t) + C_4  t^{-\frac12} \left[ y^{\prime}(t) \right]^{\frac32} \ \ \ \text{for any}\,\, t\geq t_0,
		\end{equation}
		then
		\begin{equation}\label{eqinlemma2-4}
			\varliminf_{t \rightarrow + \infty} t^{-4} y(t)>0.
		\end{equation}
	\end{lemma}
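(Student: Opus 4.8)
The plan is to treat both parts by the same three-step scheme: first show that a nontrivial $y$ must be unbounded; then upgrade the hypothesis into a clean differential inequality of the form $y'(t)\ge c\,w(t)\,y(t)^{2/3}$ valid for all large $t$, with $w(t)\equiv 1$ in part (a) and $w(t)=t^{1/3}$ in part (b); and finally integrate the elementary identity $\frac{d}{dt}y^{1/3}=\tfrac13 y^{-2/3}y'\ge \tfrac{c}{3}w(t)$ to extract the polynomial lower bound. In part (a) the last step gives $y^{1/3}(t)\gtrsim t$, hence $y(t)\gtrsim t^{3}$; in part (b) it gives $y^{1/3}(t)\gtrsim t^{4/3}$, hence $y(t)\gtrsim t^{4}$. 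Thus everything reduces to establishing the pointwise lower bound on $y'$.

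For unboundedness I would fix $t_1\ge t_0$ with $a:=y(t_1)>0$ (possible since $y$ is nondecreasing and not identically zero), so $y(t)\ge a$ for $t\ge t_1$. Feeding this into the hypothesis shows $y'(t)$ cannot be too small: in case (a) the increasing function $s\mapsto C_1 s+C_2 s^{3/2}$ must exceed $a$, forcing $y'(t)\ge\delta>0$; in case (b) a short case split shows $y'(t)\ge a/(2C_3)$ for all large $t$. Either way $y(t)\to+\infty$. For part (a) I then note that once $y$ is large the cubic term dominates: at any $t$ with $y(t)>C_1+C_2$ one cannot have $y'(t)\le 1$ (that would force $y(t)\le C_1+C_2$), so $y'(t)>1$ and hence $(y')^{3/2}>y'$; the hypothesis then reads $y\le (C_1+C_2)(y')^{3/2}$, i.e. $y'\ge (C_1+C_2)^{-2/3}y^{2/3}$, and integrating as above yields \eqref{qwelemma2-2}.

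Part (b) is where the real work lies, and it is the step I expect to be the main obstacle. Crudely bounding $t^{-1/2}\le 1$ only reproduces the $t^{3}$ growth of part (a), so one must genuinely keep the factor $t^{-1/2}$, which means arranging that $C_4 t^{-1/2}(y')^{3/2}$, rather than $C_3 y'$, is the dominant term. Splitting the hypothesis shows that at each large $t$ either $y'\ge \tfrac{1}{2C_3}y$ (superlinear, harmless) or $y'\ge (2C_4)^{-2/3}t^{1/3}y^{2/3}$, and the second alternative is the binding one precisely when $y(t)\ge \gamma t$ with $\gamma=2C_3^{3}/C_4^{2}$. Hence I would prove that $y(t)\ge \gamma t$ for all large $t$ by a barrier/crossing argument: on any interval where $y<\gamma t$ one has $y'\ge \tfrac{1}{2C_3}y$, forcing exponential growth that must cross the line $y=\gamma t$; and once on that line the bound $y'(t)\ge (2C_4)^{-2/3}t^{1/3}(\gamma t)^{2/3}\approx c\,t$ exceeds the slope $\gamma$ for large $t$, so the trajectory cannot fall back below.

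With $y\ge\gamma t$ secured, the inequality $y'\ge (2C_4)^{-2/3}t^{1/3}y^{2/3}$ holds for all large $t$, and integrating $\frac{d}{dt}y^{1/3}\gtrsim t^{1/3}$ gives $y^{1/3}(t)\gtrsim t^{4/3}$, i.e. \eqref{eqinlemma2-4}. The only auxiliary points needing care are the mild regularity of $y$ (monotonicity already yields continuity and a.e. differentiability, which suffices both for the crossing argument and for the fundamental theorem of calculus used in the integration) and the coordination of the various thresholds so that all the ``for large $t$'' statements hold simultaneously.
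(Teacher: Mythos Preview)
The paper does not actually prove this lemma; it merely states it and cites \cite[Lemma 2.2]{aBGWX} for the proof, so there is no in-paper argument to compare against. Your three-step scheme (unboundedness, extraction of a clean differential inequality $y'\gtrsim w(t)y^{2/3}$, then integration of $\frac{d}{dt}y^{1/3}$) is sound and would give a correct proof. The dichotomy-plus-barrier argument you outline for part (b) is exactly the right way to force the term $C_4 t^{-1/2}(y')^{3/2}$ to dominate, and your computation of the threshold $\gamma=2C_3^{3}/C_4^{2}$ is correct.

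One genuine slip, though minor: your closing remark that ``monotonicity already yields continuity and a.e.\ differentiability, which suffices \dots\ for the fundamental theorem of calculus'' is not right. A nondecreasing function need not be continuous, and even when it is, a.e.\ differentiability is \emph{not} enough for the identity $y(b)-y(a)=\int_a^b y'$ (the Cantor function is the standard counterexample). In the applications of this paper the functions $Y(R)$ and $Z(R)$ are visibly $C^1$ (their derivatives are computed explicitly as integrals over $\OR$), so the issue never arises in practice. But if you want the lemma to stand on its own, you should either assume local absolute continuity of $y$, or note that the hypothesis already presumes $y'(t)$ exists for every $t\ge t_0$ and add that $y'$ is locally integrable; either of these makes your integration step and the crossing argument rigorous.
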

	
	The following Korn's inequality (cf. \cite[Theorem 2.1]{CPGCAA10}) and the Ne\v{c}as inequality (cf. \cite[Theorem IV.1.1]{BooBOYFNSb} and \cite[Theorem 1]{LN21MMAS}) are useful to study the solutions of Navier-Stokes system with Navier boundary conditions.
	\begin{lemma}\label{KNINEQ26}
		(a) (Korn's inequality)  Let $\Omega_{1}$ be a  domain in $\mathbb{R}^3$,  then there exists a constant $C=C(\Omega_{1})$ such that 
		\begin{equation}\label{kornin1}
			\|\Bu\|_{H^{1}(\Omega_{1})} \leq C\left(\|\Bu\|_{L^{2}\left(\Omega_{1}\right)}+\|\D(\Bu)\|_{\mathbb{L}^{2}\left(\Omega_{1}\right)}\right), \quad \text{for  all} \,\, \Bu\in H^{1}(\Omega_{1}).
		\end{equation}
		
		(b) (Ne\v{c}as inequality)  Let $\Omega_{2}$ be a bounded Lipschitz domain in $\mathbb{R}^3$,  then there exists a constant $C=C(\Omega_{2})$ such that 
		\begin{equation}\label{necas1in}
			\|f\|_{L^{2}(\Omega_{2})} \leq C\left(\|f\|_{H^{-1}\left(\Omega_{2}\right)}+\|\nabla f\|_{H^{-1}\left(\Omega_{2}\right)}\right), \quad \text{for  all} \,\, f\in L^{2}(\Omega_{2}).
		\end{equation}
	\end{lemma}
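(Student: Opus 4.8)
The plan is to prove the Ne\v{c}as inequality (b) first and then deduce Korn's inequality (a) from it via a pointwise algebraic identity; this keeps the analytic heart of the matter isolated in one place. For (b) I would recast the estimate as a closed-range statement and exploit the Bogovskii map already recorded in Lemma \ref{Bogovskii}. Consider the bounded linear operator $\nabla\colon L^2(\Omega_2)\to (H^{-1}(\Omega_2))^3$. Its kernel consists of the (locally) constant functions, so it suffices to show that $\nabla$ has closed range and is bounded below modulo constants. By the closed range theorem this is equivalent to the adjoint $\nabla^{*}=-\div\colon (H^1_0(\Omega_2))^3\to L^2(\Omega_2)$ having closed range. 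But the range of the divergence on $H^1_0$ is exactly the space $L^2_0(\Omega_2)$ of mean-zero functions, and Lemma \ref{Bogovskii} produces, for every $g\in L^2_0(\Omega_2)$, a preimage $\bBV\in H^1_0$ with $\div\,\bBV=g$ and $\|\bBV\|_{H^1}\le C\|g\|_{L^2}$; hence the range is all of $L^2_0(\Omega_2)$, which is closed. Thus $\nabla$ has closed range.

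Since $\nabla$ is injective on $L^2_0(\Omega_2)$, the open mapping theorem then gives $\|f\|_{L^2}\le C\|\nabla f\|_{H^{-1}}$ for every mean-zero $f$. For a general $f\in L^2(\Omega_2)$ I would split $f=(f-\bar f)+\bar f$ with $\bar f$ its average: the first piece is controlled as above, while the constant $\bar f$ is estimated by $|\bar f|\le C\|\bar f\|_{H^{-1}}\le C(\|f\|_{H^{-1}}+\|f-\bar f\|_{H^{-1}})$ together with $\|f-\bar f\|_{H^{-1}}\le\|f-\bar f\|_{L^2}$, which closes the loop and yields \eqref{necas1in}.

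Granting (b), I would derive Korn's inequality (a) from the identity
\[
\partial_i\partial_j u^k=\partial_j\D(\Bu)_{ik}+\partial_i\D(\Bu)_{jk}-\partial_k\D(\Bu)_{ij},
\]
valid for smooth $\Bu$ (and for $H^1$ fields in the distributional sense). This shows that every second distributional derivative of each component $u^k$ is a first derivative of a component of $\D(\Bu)\in\mathbb{L}^2$, so $\|\nabla(\partial_l u^k)\|_{H^{-1}(\Omega_1)}\le C\|\D(\Bu)\|_{\mathbb{L}^2(\Omega_1)}$, while trivially $\|\partial_l u^k\|_{H^{-1}(\Omega_1)}\le\|\Bu\|_{L^2(\Omega_1)}$. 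Applying the Ne\v{c}as inequality to each $f=\partial_l u^k\in L^2(\Omega_1)$ and summing over $k,l$ gives $\|\nabla\Bu\|_{L^2}\le C(\|\Bu\|_{L^2}+\|\D(\Bu)\|_{\mathbb{L}^2})$, and adding $\|\Bu\|_{L^2}$ to the right-hand side produces \eqref{kornin1}. When $\Omega_1$ is unbounded or merely a general domain, one first localizes with a partition of unity so that each patch is a bounded Lipschitz piece on which (b) applies.

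The main obstacle is concentrated entirely in the closed-range step of (b): the rest is soft functional analysis together with a one-line algebraic identity. The genuinely nontrivial input is the surjectivity of the divergence onto mean-zero $L^2$ with a bounded right inverse on a bounded Lipschitz domain — precisely the content of the Bogovskii construction in Lemma \ref{Bogovskii} — whose proof on Lipschitz (rather than smooth or star-shaped) domains is the hard analytic fact underlying both inequalities. Since this input is already available in the excerpt, the remaining work reduces to the functional-analytic packaging described above.
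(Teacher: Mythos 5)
The paper never proves Lemma \ref{KNINEQ26}: both parts are quoted from the literature (Ciarlet \cite{CPGCAA10} for Korn, Boyer--Fabrie \cite{BooBOYFNSb} and Lewintan--Neff \cite{LN21MMAS} for Ne\v{c}as), so there is no internal argument to compare against. Your proposal essentially reconstructs the standard proofs, and it is sound on bounded Lipschitz domains. For (b), the duality pairing between $\nabla\colon L^{2}(\Omega_{2})\to (H^{-1}(\Omega_{2}))^{3}$ and $-\div\colon (H^{1}_{0}(\Omega_{2}))^{3}\to L^{2}(\Omega_{2})$ is correctly identified; the Bogovskii right inverse of Lemma \ref{Bogovskii} (stated in the paper precisely for bounded Lipschitz domains, so the input is legitimately available) shows the range of the divergence is the closed subspace $L^{2}_{0}(\Omega_{2})$, the closed range theorem transfers this to $\nabla$, and the open mapping theorem yields $\|f-\bar f\|_{L^{2}}\leq C\|\nabla f\|_{H^{-1}}$; your recovery of the constant mode via $|\bar f|\leq C\|\bar f\|_{H^{-1}}$ (test against a fixed nonnegative $\phi_{0}\in C^{\infty}_{c}(\Omega_{2})$) and the embedding $L^{2}\hookrightarrow H^{-1}$ closes the loop. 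For (a), your identity $\partial_{i}\partial_{j}u^{k}=\partial_{j}\D(\Bu)_{ik}+\partial_{i}\D(\Bu)_{jk}-\partial_{k}\D(\Bu)_{ij}$ checks out distributionally for $\Bu\in H^{1}$, and applying (b) to each $f=\partial_{l}u^{k}\in L^{2}$ is exactly how Ciarlet deduces Korn's second inequality from the Lions/Ne\v{c}as lemma. The net gain of your route is that it makes the lemma self-contained from material already in the paper, whereas the authors outsource it.

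The one genuine gap is your closing claim that a partition of unity handles an arbitrary domain $\Omega_{1}$. Korn's second inequality in the form \eqref{kornin1} is in fact \emph{false} for general domains: there are counterexamples on domains with external cusps (Geymonat--Gilardi), so no patching argument can succeed without assuming that $\Omega_{1}$ admits a locally finite cover by uniformly Lipschitz patches with bounded overlap, which is an additional geometric hypothesis and not a consequence of "domain." (The patching itself would be fine -- the commutators $\D(\zeta\Bu)-\zeta\D(\Bu)$ are controlled by $\|\Bu\|_{L^{2}}$ -- but the uniform Lipschitz character of the pieces is essential and unavailable in general.) To be fair, this defect sits in the lemma as printed: the hypothesis "a domain in $\mathbb{R}^{3}$" overstates the cited theorem, and every actual application in the paper, e.g.\ \eqref{korinu2} on $\sfwitBo$, takes place on a bounded Lipschitz set, where your proof is complete. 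You should either restrict the hypothesis of (a) to bounded Lipschitz domains or replace the localization remark by the explicit uniform-cover assumption.
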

	
	An important property for the flows for which the velocity $\Bu$ is bounded and periodic in one direction is that the pressure is also periodic. The proof can be found in \cite[Lemma 5.1]{aBGWX}.
	\begin{lemma} (\hspace{1sp}\cite[Lemma 5.1]{aBGWX})\label{Le:peridiPR}
		Let $\Bu$ be a bounded smooth solution to the  Navier-Stokes system \eqref{eqsteadyns} in $\mathbb{R}^{2}\times \mathbb{T}$. The pressure $P$ is also a periodic function with respect to $z$.
	\end{lemma}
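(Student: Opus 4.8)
The plan is to recover the pressure gradient algebraically from the momentum equation and then show that the only obstruction to $z$-periodicity—a constant jump across one period—must vanish. Since $\Bu$ solves \eqref{eqsteadyns}, the momentum equation gives
\[
\nabla P = \Delta \Bu - (\Bu\cdot\nabla)\Bu \quad \text{in}\ \mathbb{R}^2\times\mathbb{T}.
\]
Because $\Bu$ is periodic in $z$ (with period $1$), both $\Delta\Bu$ and $(\Bu\cdot\nabla)\Bu$ are periodic in $z$, and hence so is $\nabla P$. Writing $\Be_3$ for the unit vector in the $z$-direction and setting $Q(\Bx) := P(\Bx + \Be_3) - P(\Bx)$, the periodicity of $\nabla P$ yields $\nabla Q \equiv 0$, so $Q\equiv c$ for some constant $c$; equivalently $P(x_1,x_2,z+1) - P(x_1,x_2,z) = c$ for all $\Bx$. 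It remains only to prove $c = 0$.

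To pin down $c$, I would integrate the third component of the momentum equation over the bounded cylinder $\Omega_R = B_R\times(0,1)$. Using incompressibility to rewrite $(\Bu\cdot\nabla)u^3 = \div(u^3\Bu)$, the third equation reads $\partial_z P = \Delta u^3 - \div(u^3\Bu)$, and integrating in $z$ over one period gives $\int_0^1 \partial_z P\,dz = c$ for each $(x_1,x_2)$. Integrating this identity over $B_R$ and applying the divergence theorem on $\Omega_R$,
\[
c\,|B_R| = \int_{\partial\Omega_R}\left(\partial_n u^3 - u^3\,\Bu\cdot\Bn\right)\,dS.
\]
On the top and bottom faces $B_R\times\{0,1\}$ the integrands $\partial_z u^3$ and $(u^3)^2$ agree by periodicity while the outward normals are opposite, so these contributions cancel exactly. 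Only the lateral boundary $\partial B_R\times(0,1)$ survives, on which the outer normal is $\Be_r$, so that
\[
c\,\pi R^2 = \int_{\partial B_R\times(0,1)}\left(\partial_r u^3 - u^3 u^r\right)\,dS.
\]

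Finally I would estimate the right-hand side by crude uniform bounds. The lateral boundary has area $2\pi R$; by hypothesis $\Bu$ is bounded, and Lemma \ref{le:pebu} guarantees that $\nabla\Bu$ is bounded as well, so $|\partial_r u^3 - u^3 u^r|\le C$ uniformly on $\partial B_R\times(0,1)$. Hence $|c|\,\pi R^2 \le 2\pi C R$, i.e. $|c|\le 2C/R$, and letting $R\to\infty$ forces $c = 0$. Therefore $P(x_1,x_2,z+1) = P(x_1,x_2,z)$, which is the asserted periodicity. The only substantive point is the elimination of $c$; everything hinges on the volume-versus-surface scaling $\pi R^2$ against $2\pi R$, which succeeds precisely because the boundedness of $\Bu$ together with the boundedness of $\nabla\Bu$ supplied by Lemma \ref{le:pebu} keeps the lateral flux linear in $R$.
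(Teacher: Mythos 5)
Your proof is correct and takes essentially the same route as the argument in \cite[Lemma 5.1]{aBGWX} that the paper cites for this lemma: periodicity of $\nabla P$ (read off from the momentum equation) forces the jump $P(\Bx+\Be_3)-P(\Bx)$ to equal a constant $c$, and integrating $\partial_z P=\Delta u^3-\div(u^3\Bu)$ over $B_R\times(0,1)$ eliminates $c$ through the $\pi R^2$-versus-$2\pi R$ volume/surface scaling, with the lateral flux controlled by the boundedness of $\Bu$ and of $\nabla\Bu$ supplied by Lemma \ref{le:pebu}. There is no gap and no circularity, since Lemma \ref{le:pebu} rests on local regularity estimates that do not use periodicity of the pressure.
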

	\section{Helically Symmetric Flows}\label{Sec3}
	The section is  devoted to the proof of Theorem \ref{th:01}. Note that the helically symmetric flow must be periodic along the axial direction. Hence the analysis on Liouville-type theorem for steady Navier-Stokes flows in a slab with periodic boundary conditions \cite[Section 5]{aBGWX} can be applied.  The key ingredient of this paper  is that we make full use of  the helical  identities \eqref{dhelicdef} here. In this section, without loss of generality, we assume that $\kappa = \frac{1}{2\pi}$.
	
	The following lemma shows that the helically symmetric flow must be a constant vector if the associated Dirichlet integral is finite.
	\begin{lemma}\label{Le:32}
		Let $\Bu$ be a bounded smooth helically symmetric solution to the  Navier-Stokes system \eqref{eqsteadyns} in $\mathbb{R}^{3}$, and  $\Bu$  has a finite Dirichlet integral in the slab, i.e.,
		\begin{equation}\label{21Dintass}
			\int_{\mathbb{R}^{2}\times
				(0,\, 1)}|\nabla\Bu|^{2} \,d\Bx<+\infty.
		\end{equation}
		Then $\Bu$ must be a constant vector of the form  $(0, 0, C)$.
	\end{lemma}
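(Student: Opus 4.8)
The plan is to run a Saint-Venant / Caccioppoli energy argument on the slab, which by the $1$-periodicity in $z$ (recall $\kappa=\frac1{2\pi}$) we may regard as $\r^{2}\times\mathbb{T}$, and to show that the finiteness \eqref{21Dintass} forces the localized energy to vanish in the limit. Concretely, I would test the momentum equation in \eqref{eqsteadyns} with $\Bu\varphi_R^2$, where $\varphi_R$ is the radial cut-off \eqref{cut-off}, and integrate over $\r^{2}\times(0,1)$; periodicity kills all boundary contributions in $z$, and $\div\Bu=0$ together with integration by parts yields
\[
\int |\na\Bu|^2\varphi_R^2\,d\Bx = -\int \na\Bu:(\Bu\otimes\na\varphi_R^2) + \int\frac{|\Bu|^2}{2}\,\Bu\cdot\na\varphi_R^2 + \int P\,\Bu\cdot\na\varphi_R^2,
\]
whose right-hand side is supported on the annular region $\OR$. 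Writing $Y(R)=\int_{\Omega_R}|\na\Bu|^2\,d\Bx$, the left-hand side dominates $Y(R-1)$, and the target is to bound each of the three error terms by $C R^{1/2}\big(Y(R)-Y(R-1)\big)^{1/2}$.

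The gradient error term is immediate, since $|\na\varphi_R^2|\le 2$, $|\OR|\sim R$, and $\Bu\in L^\infty$ give $CR^{1/2}\|\na\Bu\|_{L^2(\OR)}$ by Cauchy--Schwarz. For the cubic and pressure terms I would use two structural facts. First, incompressibility gives $\int\Bu\cdot\na\varphi_R^2=0$, so any constant may be subtracted from $|\Bu|^2$ (respectively from $P$) in the relevant integral. Second, and decisively, the helical identities \eqref{dhelicdef} tie $\partial_\theta$ of each component to $\partial_z$, so that on $\OR$ the velocity is, up to the bounded $r$-variation across a unit-width annulus, a function of $r$ and $s=\kappa\theta+z$ that is $1$-periodic in $s$. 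This furnishes a Poincar\'e inequality $\|\Bu-\bar\Bu_R\|_{L^2(\OR)}\le C\|\na\Bu\|_{L^2(\OR)}$ with $C$ \emph{independent of} $R$: the long circumferential direction, which would otherwise force a Poincar\'e constant growing like $R$, is harmless precisely because oscillation in $\theta$ is controlled by oscillation in $z$. Subtracting the mean of $|\Bu|^2$ and invoking this inequality bounds the cubic term by $CR^{1/2}\|\na\Bu\|_{L^2(\OR)}$.

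For the pressure I would estimate $\|P-\bar P_R\|_{L^2(\OR)}$ directly. Using the Bogovskii map (Lemma \ref{Bogovskii}) to represent $P-\bar P_R$ against divergence-free-complementing test fields $\bBV\in H^1_0$, and substituting $\na P=\Delta\Bu-(\Bu\cdot\na)\Bu$ from the equation, the viscous contribution is controlled by $\|\na\Bu\|_{L^2(\OR)}$. For the convective contribution $\int\Bu\otimes\Bu:\na\bBV$ I would first subtract constants from the two factors of $\Bu$ (legitimate because $\bBV\in H^1_0$ and $\div\Bu=0$ make the pure-constant and mixed pieces either vanish or reduce to $\|\na\Bu\|_{L^2}$ terms), then apply the $R$-independent Poincar\'e inequality together with $\|\Bu\|_{L^\infty}$. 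This produces $\|P-\bar P_R\|_{L^2(\OR)}\le C\|\na\Bu\|_{L^2(\OR)}$, so the pressure term is also $\le CR^{1/2}\|\na\Bu\|_{L^2(\OR)}$. I expect this uniform pressure estimate to be the main obstacle, as it is exactly where the Bogovskii construction and the helical/periodic Poincar\'e inequality must cooperate; a naive bound of the convective term by $\|\Bu\|_{L^4}^2$ produces an unacceptable factor of $R$, and only the constant-subtraction combined with the $R$-independent Poincar\'e inequality removes it.

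Combining the three estimates yields the Saint-Venant inequality $Y(R-1)\le C R^{1/2}\big(Y(R)-Y(R-1)\big)^{1/2}$. Under \eqref{21Dintass}, $Y$ is bounded and nondecreasing, so $\sum_k\big(Y(k)-Y(k-1)\big)<\infty$ forces $\varliminf_{R\to+\infty} R\big(Y(R)-Y(R-1)\big)=0$; evaluating the inequality along a sequence $R_k\to+\infty$ realizing this liminf gives $Y(R_k-1)\to0$, whence $Y\equiv0$ and $\na\Bu\equiv0$. Thus $\Bu$ is a constant vector $\Bc$, and helical symmetry $\Bu(S_\rho\Bx)=R_\rho\Bu(\Bx)$ forces $\Bc=R_\rho\Bc$ for every $\rho$, i.e. $\Bc$ is invariant under all rotations about the $x_3$-axis, so $\Bc=(0,0,C)$, as claimed.
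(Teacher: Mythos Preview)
Your overall Saint-Venant strategy is exactly the paper's, and your endgame (extract a subsequence with $R_k\bigl(Y(R_k)-Y(R_k-1)\bigr)\to 0$ from summability) is a clean alternative to the paper's differential-inequality argument $Y(R)\le CR^{1/2}[Y'(R)]^{1/2}\Rightarrow (C^2R)^{-1}\le(-1/Y)'$. Your helical Poincar\'e inequality $\|\Bu-\bar\Bu_R\|_{L^2(\OR)}\le C\|\na\Bu\|_{L^2(\OR)}$ with $C$ independent of $R$ is correct and slightly more than the paper uses: since $\partial_r(ru^r)+\partial_\theta u^\theta+\partial_z(ru^z)=0$ combined with the helical identity $\partial_\theta u^\theta=\kappa\partial_z u^\theta$ gives $\int_0^1 u^r\,dz=0$, the paper gets away with Poincar\'e for $u^r$ alone.

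There is, however, a genuine gap in your pressure step. You propose to bound $\|P-\bar P_R\|_{L^2(\OR)}$ by duality, invoking Lemma~\ref{Bogovskii} to produce $\bBV\in H_0^1(\OR)$ with $\div\bBV=g$ and $\|\na\bBV\|_{L^2}\le C\|g\|_{L^2}$. The issue is that the Bogovskii constant on the annular shell $\OR$ is \emph{not} uniform in $R$: $\OR$ is a thin torus of circumference $\sim R$ and cross-section $\sim 1$, and for such domains the divergence-inverse constant grows linearly with the aspect ratio. Lemma~\ref{Bogovskii} only asserts $R$-independent constants on the \emph{fixed} boxes $D_R=(R-1,R)\times(0,1)$ and $\mathcal D_R=(R-1,R)\times(0,2\pi)\times(0,1)$, not on $\OR$. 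Your Poincar\'e argument correctly exploits helical symmetry to neutralise the long $\theta$-direction, but the Bogovskii step needs the same reduction and you do not carry it out.

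The paper's fix is to avoid estimating $\|P-\bar P_R\|_{L^2(\OR)}$ altogether. Using $\int_0^1 ru^r\,dz=0$, it applies the two-dimensional Bogovskii map on $D_R$ \emph{at each fixed $\theta$} to write $ru^r=\partial_r\Psi_{R,\theta}^r+\partial_z\Psi_{R,\theta}^z$ with $\|(\partial_r,\partial_z)\Psi_{R,\theta}\|_{L^2(D_R)}\le C\|ru^r\|_{L^2(D_R)}$ uniformly in $R,\theta$. Then $\int P u^r r=-\int(\partial_rP\,\Psi^r+\partial_zP\,\Psi^z)$, and one substitutes the $r$- and $z$-momentum equations directly; the only place a $\partial_\theta$ enters (from $\Delta_{r,\theta,z}$ and from $\partial_\theta\Psi_{R,\theta}$, by linearity of the Bogovskii map) is handled by the helical identity $\partial_\theta=\kappa\partial_z$. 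This keeps every constant $R$-independent without ever needing Bogovskii on a long domain. Your argument can be repaired along the same lines---restrict to helically symmetric test functions and reduce the divergence problem to the unit square in $(r,s)$---but as written the uniform pressure bound is not justified.
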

	\begin{proof}[Proof]
		The proof is divided into two steps.
		
		\emph{Step 1.} \emph {Set up.}	 Since $\Bu$ is a bounded smooth helically symmetric solution to \eqref{eqsteadyns} in $\mathbb{R}^{3}$,  it follows from Lemma \ref{Le:peridiPR} that the pressure $P$ is also a periodic function with respect to $z$.  Multiplying the first equation in \eqref{eqsteadyns} by $\varphi_{R}(r)\Bu$ and integrating by parts over  the  slab $\Omega=\mathbb{R}^{2}\times (0,1)$, one obtains	
		\begin{equation}\label{muinteeq}
			\int_{\Omega}|\nabla\Bu|^{2}\varphi_{R} \, d\Bx
			=-\int_{\Omega}\nabla\varphi_{R}
			\cdot \nabla\Bu\cdot \Bu \,d\Bx + \int_{\Omega}\frac{1}{2}|\Bu|^{2}\Bu\cdot \nabla\varphi_{R} \,d\Bx + \int_\Omega P \Bu \cdot \nabla \varphi_R \,d\Bx.
		\end{equation}
		The straightforward computations give
		\begin{equation}\nonumber
			\int_\Omega P \Bu \cdot \nabla \varphi_R \,d\Bx
			= -\int_0^1 \int_0^{2\pi} \int_{R-1}^R P u^{r}r \, dr d\theta dz.
		\end{equation}
		Using the divergence free and helically symmetric  properties \eqref{dhelicdef}, for all  $0\leq r<+\infty$,  yields
		\begin{equation}\label{eqA116}
			\begin{split}
				\partial_{r}\int_{0}^{1}ru^{r}dz=-\int_{0}^{1}\partial_{\theta}u^{\theta}+\partial_{z}(ru^{z})\,dz=-\int_{0}^{1}\dfrac{1}{2\pi}\partial_{z}u^{\theta}+\partial_{z}(ru^{z})\, dz =0.
			\end{split}
		\end{equation}
		This implies
		\begin{equation}\label{eqA117}
			\int_{0}^{1}ru^{r}\, dz=0
			\quad \text{and}\quad \int_{0}^{1} \int_{R-1}^{R}ru^{r}\, drdz=0.
		\end{equation}
		It follows from \eqref{eqA117} and Lemma \ref{Bogovskii} that for every fixed $\theta\in [0,2\pi]$,
		there exists a vector valued function $\Ps_{R,\theta}(r,z)\in H^{1}_{0}(D_{R}; \mathbb{R}^{2})$ satisfying
		\begin{equation}\label{eqA118}
			\partial_{r}\Psi_{R,\theta}^{r}
			+\partial_{z}\Psi_{R,\theta}^{z}
			=ru^{r},
		\end{equation}
		together with the estimate
		\begin{equation}\label{eqA119}
			\|(\partial_{r}, \partial_z) \Ps_{R,\theta}\|_{L^{2}(D_{R})} \leq C\|ru^{r}\|_{L^{2}(D_{R})},
		\end{equation}
		where $C$ is independent of $\theta$ and $R$.
		Owing to \eqref{eqA117}, the Poincar\'{e} inequality
		\begin{equation}\label{eqA122}
			\|u^{r}\|_{L^{2}(\OR)}\leq	C\|\partial_{z}u^{r}\|_{L^{2}(\OR)} \end{equation}
		holds.
		This, together with \eqref{eqA119}, gives
		\begin{equation}\label{eqA123}
			\|(\partial_{r}, \partial_z)\Ps_{R,\theta}\|_{L^{2}(\DR)} \leq C\|ru^{r}\|_{L^{2}(\DR)}\leq CR^{\frac{1}{2}}\|u^{r}\|_{L^{2}(\OR)}\leq CR^{\frac{1}{2}}\|\nabla\Bu\|_{L^{2}(\OR)}.
		\end{equation}
		Note that the Bogovskii map is a linear map  \cite[Section III.3]{GAGP11}. Hence there is a universal constant $C>0$ such that
		\begin{equation}\label{eqA124}
			\|(\partial_{\theta}\partial_{r}, \partial_{\theta}\partial_{z}) \Ps_{R,\theta}\|_{L^{2}(\DR)} \leq C\|r\partial_{\theta}
			u^{r}\|_{L^{2}(\DR)}\leq CR^{\frac{1}{2}}\|\partial_{z} u^{r}\|_{L^{2}(\OR)},
		\end{equation}
		where  the last inequality is due to \eqref{dhelicdef}.
		Furthermore, it follows from Lemma \ref{Le:peridiPR} and \eqref{eqA118} that one has
		\begin{equation}\label{preeqes}
			\begin{split}
				\int_{\Omega}P\Bu\cdot \nabla \varphi_{R} \,d\Bx
				=&-\int_{0}^{1}
				\int_{0}^{2\pi}
				\int_{R-1}^{R}P\cdot ru^{r}
				\, drd\theta dz\\
				=&-\int_{0}^{1}
				\int_{0}^{2\pi}
				\int_{R-1}^{R}P(\partial_{r}\Psi_{R,\theta}^{r}+\partial_{z}\Psi_{R,\theta}^{z}) \, drd\theta dz\\
				=& \int_{0}^{1}\int_{0}^{2\pi}
				\int_{R-1}^{R}(\partial_{r}P\Psi_{R,\theta}^{r}+ \partial_{z}P\Psi_{R,\theta}^{z})\, drd\theta dz.
			\end{split}
		\end{equation}
		According to \eqref{eqA8}, the gradient  of the pressure $(\partial_{r}P, \partial_{z}P)$ satisfies
		\begin{equation}\label{eqPGe}
			\left\{
			\begin{aligned}
				&\left(u^{r}\partial_{r}+\dfrac{u^{\theta}}{r}\partial_{\theta}
				+u^{z}\partial_{z}\right) u^{r}-\dfrac{(u^{\theta})^{2}}{r}
				+\dfrac{2}{r^{2}}\partial_{\theta} u^{\theta}+\partial_{r}P
				=\left(\Delta_{r,\theta,z}-\dfrac{1}{r^{2}}\right)u^{r},\\
				&\left(u^{r}\partial_{r}
				+\dfrac{u^{\theta}}{r}\partial_{\theta}+u^{z}\partial_{z}\right) u^{z}+\partial_{z}P
				=\Delta_{r,\theta,z}u^{z},
			\end{aligned}
			\right.
		\end{equation}
		where
		\[
		\Delta_{r,\theta,z}=\partial^{2}_{r}
		+\dfrac{1}{r}\partial_{r}
		+\dfrac{1}{r^{2}}\partial^{2}_{\theta}
		+\partial^{2}_{z}.
		\]
		By virtue of  \eqref{eqPGe} and integration by parts, one has
		\begin{equation}\label{eqArP}
			\begin{split}
				& \int_{0}^{1}\int_{0}^{2\pi}\int_{R-1}^{R}\partial_{r}P\Psi^{r}_{R,\theta}\, drd\theta dz\\
				=& -\int_{0}^{1}\int_{0}^{2\pi}\int_{R-1}^{R}\left(\partial_{r}u^{r}\partial_{r}\Psi^{r}_{R,\theta}+\partial_{z}u^{r}\partial_{z}\Psi^{r}_{R,\theta}
				+\dfrac{1}{r^2}\partial_{\theta}u^{r}\partial_{\theta}\Psi^{r}_{R,\theta}\right)\, drd\theta dz \\
				&+\int_{0}^{1}\int_{0}^{2\pi}\int_{R-1}^{R}\left[\left(\dfrac{1}{r}\partial_{r}-\dfrac{1}{r^2}\right)u^{r}-\dfrac{2}{r^2}\partial_{\theta}u^{\theta}\right]\Psi^{r}_{R,\theta}\, drd\theta dz  \\
				&-\int_{0}^{1}\int_{0}^{2\pi}\int_{R-1}^{R}\left[\left(u^{r}\partial_{r}+\dfrac{u^{\theta}}{r}\partial_{\theta}+u^{z}\partial_{z}\right)u^{r}-\dfrac{(u^{\theta})^2}{r}\right]\Psi_{R,\theta}^{r}\, drd\theta dz
			\end{split}
		\end{equation}
		and
		\begin{equation}\label{eqAzP}
			\begin{split}
				&\int_{0}^{1}\int_{0}^{2\pi}\int_{R-1}^{R}\partial_{z}P\Psi^{z}_{R,\theta}\, drd\theta dz\\
				=&	-\int_{0}^{1}\int_{0}^{2\pi}\int_{R-1}^{R}\left(\partial_{r}u^{z}\partial_{r}\Psi_{R,\theta}^{z}+\partial_{z}u^{z}\partial_{z}\Psi_{R,\theta}^{z}+\dfrac{1}{r^2}\partial_{\theta}u^{z}\partial_{\theta}\Psi_{R,\theta}^{z}\right)\, drd{\theta}dz\\
				& -\int_{0}^{1}\int_{0}^{2\pi}\int_{R-1}^{R}\left[\left(u^{r}\partial_{r}+
				\dfrac{u^{\theta}}{r}\partial_{\theta}+u^{z}\partial_{z}-\dfrac{1}{r}\partial_{r}\right)u^{z}\right]\Psi_{R,\theta}^{z}\, drd\theta dz.
			\end{split}
		\end{equation}
		\emph{Step 2.} \emph {Saint-Venant type estimate.} Making use of helical identities \eqref{dhelicdef}, Poincar\'e inequality \eqref{eqA122},  the estimates   \eqref{eqA123}-\eqref{eqA124}  and $\|u^{r}\|_{L^{2}(\OR)}\leq CR^{\frac{1}{2}}\|\Bu\|_{L^{\infty}(\OR)}$, one has
		\begin{equation}\label{eqA126}
			\begin{split}
				&\left| \int_{0}^{1}\int_{0}^{2\pi}\int_{R-1}^{R}\left(\partial_{r}u^{r}\partial_{r}\Psi_{R,\theta}^{r}+\partial_{z}u^{r}\partial_{z}\Psi_{R,\theta}^{r}\right)\, drd\theta dz\right|\\
				\leq&      CR^{-\frac{1}{2}}\|\nabla\Bu\|_{L^{2}(\OR)}\cdot   R^{\frac{1}{2}}\|u^{r}\|_{L^{2}(\OR)}\leq  CR^{\frac{1}{2}} \|\nabla\Bu\|_{L^{2}(\OR)}
			\end{split}
		\end{equation}
		and
		\begin{equation}\label{eqA127}
			\begin{split}
				\left| \int_{0}^{1}\int_{0}^{2\pi}
				\int_{R-1}^{R}\dfrac{1}{r^{2}}\partial_{\theta} u^{r}\partial_{\theta} \Psi_{R,\theta}^{r}\, drd\theta dz\right|
				\leq& CR^{-\frac{5}{2}}\|\partial_{z}u^{r}\|_{L^{2}(\OR)}\cdot R^{\frac{1}{2}}\|\nabla\Bu\|_{L^{2}(\OR)}\\
				\leq& CR^{-2}\|\nabla\Bu\|^{2}_{L^{2}(\OR)}\leq C\|\nabla\Bu\|_{L^{2}(\OR)},
			\end{split}
		\end{equation}	
		where  the last inequality is obtained with the aid of the  assumption \eqref{21Dintass}. Furthermore, one has	
		\begin{equation}\label{eqA128}
			\begin{split}
				&\left|\int_{0}^{1}\int_{0}^{2\pi}\int_{R-1}^{R}\left[\left(\dfrac{1}{r}\partial_{r}-\dfrac{1}{r^2}\right)u^{r}
				-\dfrac{2}{r^{2}}\partial_{\theta}u^{\theta}\right]\Psi_{R,\theta}^{r}\, drd\theta dz \right|	\\
				\leq& CR^{-1}R^{-\frac{1}{2}}\| \nabla\Bu\|_{L^{2}(\OR)}
				\cdot R^{\frac{1}{2}} \|u^{r}\|_{L^{2}(\OR)}
				\leq CR^{-\frac{1}{2}}\| \nabla\Bu\|_{L^{2}(\OR)}
			\end{split}
		\end{equation}
		and
		\begin{equation}\label{eqA129}
			\begin{split}			   &\left|\int_{0}^{1}\int_{0}^{2\pi}
				\int_{R-1}^{R}\left[\left(u^{r}\partial_{r}+\dfrac{u^{\theta}}{r}\partial_{\theta}+u^{z}\partial_{z}\right)u^{r}-\dfrac{(u^{\theta})^{2}}{r}\right]\Psi_{R,\theta}^{r} \,drd\theta dz \right|	\\
				\leq&
				C\|\Bu\|_{L^{\infty}(\OR)}
				\left(R^{-\frac{1}{2}}\|\nabla\Bu\|_{L^{2}(\OR)}+R^{-\frac{3}{2}}\|u^{\theta}\|_{L^{2}(\OR)}\right)\cdot R^{\frac{1}{2}}\|u^{r}\|_{L^{2}(\OR)}\\
				\leq&
				CR^{\frac{1}{2}}\| \nabla\Bu\|_{L^{2}(\OR)}.
			\end{split}
		\end{equation}
		Collecting the estimates \eqref{eqA126}-\eqref{eqA129} gives
		\begin{equation}\label{eqA130}
			\left|\int_{0}^{1}\int_{0}^{2\pi}
			\int_{R-1}^{R}\partial_{r}P\Psi^{r}_{R,\theta}\, drd\theta dz\right|
			\leq CR^{\frac{1}{2}}\|\nabla\Bu\|_{L^{2}(\OR)}.
		\end{equation}
		Similarly, it holds that
		\begin{equation}\label{eqA131}
			\left|\int_{0}^{1}\int_{0}^{2\pi}
			\int_{R-1}^{R}\partial_{z}P\Psi^{z}_{R,\theta}\, drd\theta dz\right|
			\leq CR^{\frac{1}{2}}\|\nabla\Bu\|_{L^{2}(\OR)}.
		\end{equation}
		We now estimate the first two terms on the right hand side of \eqref{muinteeq}.
		Using  H{\"o}lder inequality  yields
		\begin{equation}\label{basces1}
			\left|\int_{\Omega}\nabla\varphi_{R}\cdot \nabla\Bu\cdot\Bu  \,d\Bx \right|\leq C\|\nabla\Bu\|_{L^{2}(\OR)}\cdot R^{\frac{1}{2}}\|\Bu\|_{L^{\infty}(\OR)}\leq CR^{\frac{1}{2}}\|\nabla\Bu\|_{L^{2}(\OR)}.
		\end{equation}
		It follows from	Poincar\'{e} inequality \eqref{eqA122} that
		\begin{equation}\label{basces2}
			\left|\int_{\Omega}\frac{1}{2}\left|\Bu \right|^2\Bu\cdot \nabla \varphi_{R} \,d\Bx\right|\leq C\|\Bu\|^{2}_{L^{\infty}(\OR)} \|u^{r}\|_{L^{2}(\OR)}\cdot R^{\frac{1}{2}}\leq CR^{\frac{1}{2}}\|\nabla\Bu\|_{L^{2}(\OR)}.
		\end{equation}
		Combining  \eqref{eqA130}-\eqref{basces2}, one arrives at
		\begin{equation}\label{eqbaesin}
			\int_{\Omega}|\nabla \Bu|^{2}\varphi_{R} \,d\Bx
			\leq CR^{\frac{1}{2}}\|\nabla\Bu\|_{L^{2}(\OR)}.
		\end{equation}
		
		Let
		\begin{equation}\label{gDinsgro}
			Y(R)=
			\int_{0}^{1}\iint_{\mathbb{R}^{2}}
			|\nabla\Bu|^{2}\varphi_{R}\left(\sqrt{x_{1}^{2}+x_{2}^{2}}\right)
			\, dx_{1} dx_{2} dx_{3}.
		\end{equation}
		Straightforward computations give
		\begin{equation}\label{eqA50}
			Y(R)=\int_{0}^{1}\int_{0}^{2\pi}
			\left(\int_{0}^{R-1}|\nabla\Bu|^{2}r\,dr+\int_{R-1}^{R}|\nabla\Bu|^{2}(R-r)r\, dr\right)d\theta dz
		\end{equation}
		and
		\begin{equation}\label{eqA51}
			Y^{\prime}(R)=\int_{\OR}|\nabla\Bu|^{2} \,d\Bx.
		\end{equation}
		Hence the estimate \eqref{eqbaesin} can be written
		as
		\begin{equation}\label{eqA52}
			Y(R)\leq CR^{\frac{1}{2}} \left[Y^{\prime}(R)\right]^{\frac{1}{2}}.
		\end{equation}
		If $\nabla\Bu$ is not identically equal to zero, then $Y(R)> 0$ for $R\geq R_{0}$ with some $R_{0}>0$, and one has
		\begin{equation}\label{eqA53}
			\dfrac{1}{C^{2}R}\leq
			\left(-\dfrac{1}{Y(R)}\right)^{\prime}.
		\end{equation}
		Integrating it over $(R_{0},R)$ for large $R_{0}$, one arrives at
		\begin{equation}\label{eqA54}
			\dfrac{1}{C^{2}}\ln\dfrac{R}{R_{0}}\leq
			-\dfrac{1}{Y(R)}+\dfrac{1}{Y(R_{0})}\leq \dfrac{1}{Y(R_{0})}.
		\end{equation}
		This leads to a contradiction when $R$ is sufficiently large. Hence, $\nabla \Bu\equiv0$ and  $\Bu$ is a constant vector.	Moreover, the helical properties \eqref{helidef} give
		\begin{equation}
			\begin{split}
				\Bu=&u^{r}(r,\theta,z) \Be_{r}+
				u^{\theta}(r,\theta,z)\Be_{\theta}+
				u^{z}(r,\theta,z)\Be_{z}\\
				=&v^{r}\left(r, \frac{1}{2\pi}\theta+z\right) \Be_{r}+
				v^{\theta}\left(r, \frac{1}{2\pi}\theta+z\right)\Be_{\theta}+
				v^{z}\left(r, \frac{1}{2\pi}\theta+z\right)\Be_{z}.
			\end{split}
		\end{equation}
		Hence one has $u^{r}=u^{\theta}\equiv 0$ and $u^{z}\equiv C$. This finishes the proof of the lemma.
	\end{proof}
	Now we are ready to prove Theorem \ref{th:01}.
	\begin{proof}[Proof for  Theorem \ref{th:01}]
		Since $\Bu$ is a bounded smooth helically symmetric solution to \eqref{eqsteadyns} in $\mathbb{R}^{3}$,  it follows from Lemma \ref{Le:peridiPR} that the equality \eqref{muinteeq} still holds.
		
		The proof is almost the same as that for  Lemma \ref{Le:32}, except that
		\begin{equation}\label{eqA133}
			\begin{split}
				\left| \int_{0}^{1}\int_{0}^{2\pi}
				\int_{R-1}^{R}\dfrac{1}{r^{2}}\partial_{\theta} u^{r}\partial_{\theta} \Psi_{R,\theta}^{r}\,drd\theta dz
				\right|
				\leq& CR^{-\frac{5}{2}}\|\partial_{z}u^{r}\|_{L^{2}(\OR)}\cdot R^{\frac{1}{2}}\|\nabla\Bu\|_{L^{2}(\OR)}\\
				\leq& C\|\nabla\Bu\|^{2}_{L^{2}(\OR)}.
			\end{split}
		\end{equation}	
		The computations in the proof of Lemma \ref{Le:32} imply
		\begin{equation}\label{eqA134}
			Y(R)\leq  C_{1}Y^{\prime}(R)
			+C_{2}R^{\frac{1}{2}}
			\left[Y^{\prime}(R)\right]^{\frac{1}{2}},
		\end{equation}
		where $Y(R)$ is defined in \eqref{gDinsgro}. Hence one has
		\begin{equation}\label{eqA135}
			\left[Y^{\prime}(R)\right]^{\frac{1}{2}}
			\geq \dfrac{-C_{2}R^{\frac{1}{2}}+\sqrt{C_{2}^{2}R+4C_{1}Y(R)}}{2C_{1}}
			\geq
			\dfrac{Y(R)}{\sqrt{C_{2}^{2}R+ 4C_{1}Y(R)}}.
		\end{equation}
		If $\nabla\Bu$ is not identically  zero,  $Y(R)>0$ for $R$ large enough. It follows from \eqref{eqA135} that one obtains
		\begin{equation}\label{eqA136}
			\left[C^{2}_{2}RY^{-2}(R)+4C_{1}
			Y^{-1}(R)\right]Y^{\prime}(R)\geq 1.
		\end{equation}
		Let $M$ be a large number satisfying $M^{-1}C^{2}_{2}\leq \frac{1}{4}$. According to Lemma \ref{Le:32}, there exists a constant $R_{0}>2$ such that $Y(R_{0})\geq M$, otherwise $\nabla\Bu\equiv0$. For every $R>R_{0}$, integrating \eqref{eqA136} over $\left[R, 2R\right]$, one gets
		\begin{equation}\label{eqA137}
			2R\cdot C^{2}_{2}\left[\dfrac{1}{Y(R)}
			-\dfrac{1}{Y(2R)}\right]+4C_{1}\ln \dfrac{Y(2R)}{Y(R)}\geq R.
		\end{equation}
		Since $Y(R)\geq M$, it holds that
		\begin{equation}\label{eqA138}
			\dfrac{Y(2R)}{Y(R)}\geq \text{exp}\left\{ \dfrac{R}{8C_{1}}\right\}.
		\end{equation}
		This implies the exponential growth of $\|\nabla\Bu\|_{L^{2}(\Omega_{R})}$. However, according to Lemma \ref{le:pebu}, $\nabla \Bu$ is uniformly bounded in
		$\Omega$. So that $\|\nabla\Bu\|_{L^{2}(\Omega_{R})}$ has at most linear growth in $R$. This contradiction implies $\nabla\Bu\equiv0$. It follows from the proof of  Lemma \ref{Le:32} that the velocity $\Bu$  must be a constant vector of the form  $(0, 0, C)$.  The proof of Theorem \ref{th:01} is completed.
	\end{proof}
	\section{flows in a slab with full slip boundary conditions}\label{Sec4}	
	In this section, we give a proof of Theorem \ref{th:02} for the case that the flows satisfy the full slip boundary conditions. One of the key ingredients is that the flow can also be regarded as a flow in a slab with periodic boundary conditions after we do a suitable extension. First, we recall a proposition, which shows that the Liouville-type theorems hold for solutions in a slab with periodic boundary conditions. The proof can be found in \cite[Section 5]{aBGWX}.
	\begin{pro} (\hspace{1sp}\cite[Theorem 1.4]{aBGWX}) \label{pro41}
		Let $\Bu$ be a bounded smooth solution to the Navier-Stokes system \eqref{eqsteadyns} in $\mathbb{R}^2\times \mathbb{T}$.
		Then $\Bu$ must be a constant vector provided that one of the following conditions holds:
		\begin{enumerate}
			
			\item[(a)] $u^\theta$ is axisymmetric, i.e., $u^\theta$ is independent of $\theta$;
			
			\item[(b)] $u^r$ is axisymmetric, i.e.,  $u^r$ is independent of $\theta$;
			
			\item[(c)] $ru^r$ converges to $0$, as $r \rightarrow +\infty$ uniformly in $\theta$, $z$;
			
			\item[(d)] $\|\Bu\|_{L^\infty(\Omega)} <2\pi. $
			
		\end{enumerate}
		Furthermore, in  cases (a), (b), and (c), the only nonzero component of the velocity field must be $u^z$, i.e., the constant vector $\Bu$ must be of the form $(0,0, c)$.
	\end{pro}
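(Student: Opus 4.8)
The plan is to run the same Saint--Venant scheme used for Lemma \ref{Le:32} and Theorem \ref{th:01}, now on the periodic slab $\mathbb{R}^2\times\mathbb{T}$, and to isolate exactly where each of the four hypotheses enters. First I would record the basic energy identity: since $P$ is $z$-periodic by Lemma \ref{Le:peridiPR}, testing the momentum equation against $\varphi_R\Bu$ yields \eqref{muinteeq} verbatim, so the task reduces to bounding the three boundary-layer integrals over the annular region $\OR$ by $CR^{1/2}\|\nabla\Bu\|_{L^2(\OR)}$ (up to one term of order $\|\nabla\Bu\|_{L^2(\OR)}^2$). The viscous cross term is immediate from H\"older's inequality and $\|\Bu\|_{L^\infty}$; the genuine difficulties are the cubic term $\int_{\OR}|\Bu|^2u^r$ and the pressure term, both of which need control of $u^r$ on $\OR$.

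A preliminary observation I would use throughout is that the radial flux vanishes automatically: integrating the divergence-free equation in \eqref{eqA8} over a cross-section and using $z$-periodicity shows $\int_0^{2\pi}\int_{\mathbb{T}}ru^r\,d\theta\,dz$ is independent of $r$, and letting $r\to0^+$ (where $ru^r\to0$ by boundedness and smoothness at the axis) forces this constant to be $0$. This is precisely the mean-zero condition needed to apply the three-dimensional Bogovskii map of Lemma \ref{Bogovskii}(2) on $\mathcal{D}_R$, producing $\bBV\in H^1_0$ with $\partial_rV^r+\partial_\theta V^\theta+\partial_z V^z=ru^r$ and $\|\bar\nabla\bBV\|_{L^2(\DR)}\le C\|ru^r\|_{L^2(\DR)}$. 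Substituting this into the pressure integral and integrating by parts moves the derivatives onto $P$, after which I replace $\nabla P$ using the radial and axial momentum equations, as in \eqref{eqArP}--\eqref{eqAzP}; every resulting term is then estimated by $|\nabla\Bu|$, $\|\Bu\|_{L^\infty}$, and $\|u^r\|_{L^2(\OR)}$.

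The four hypotheses all serve the single purpose of controlling $\|u^r\|_{L^2(\OR)}$, hence the cubic and pressure terms. In cases (a) and (b) I would show that $u^r$ has zero $z$-average pointwise in $(r,\theta)$: if $u^r$ is axisymmetric this is immediate from the vanishing flux, while if $u^\theta$ is axisymmetric the divergence equation collapses to $\frac1r\partial_r(ru^r)+\partial_z u^z=0$, and integrating in $z$ together with smoothness at the axis again gives $\int_{\mathbb{T}}u^r\,dz=0$; this yields the $O(1)$ Poincar\'e inequality $\|u^r\|_{L^2(\OR)}\le C\|\partial_z u^r\|_{L^2(\OR)}$, exactly as in \eqref{eqA122}. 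In case (c) the uniform decay $ru^r\to0$ makes $\|ru^r\|_{L^\infty(\OR)}=o(1)$, so $\|u^r\|_{L^2(\OR)}=o(1)R^{-1/2}$ and the bad terms are directly small. In case (d) there is no such structural simplification, so instead I would exploit the smallness $\|\Bu\|_{L^\infty}<2\pi$: a Poincar\'e inequality in the periodic variable, whose sharp constant is exactly what produces the threshold $2\pi$, lets the convective contribution be absorbed into the Dirichlet integral $\int_\Omega|\nabla\Bu|^2\varphi_R$ on the left of \eqref{muinteeq}.

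In every case the outcome is the Saint--Venant inequality $Y(R)\le C_1 Y'(R)+C_2R^{1/2}[Y'(R)]^{1/2}$ for $Y(R)$ as in \eqref{gDinsgro}. Then, exactly as in the proof of Theorem \ref{th:01} through estimates \eqref{eqA135}--\eqref{eqA138}, if $\nabla\Bu\not\equiv0$ this forces $Y(2R)/Y(R)\ge\exp(cR)$, i.e. exponential growth of $\|\nabla\Bu\|_{L^2(\Omega_R)}$; but Lemma \ref{le:pebu} gives $|\nabla\Bu|\in L^\infty$, so $\|\nabla\Bu\|_{L^2(\Omega_R)}$ grows at most linearly in $R$, a contradiction. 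Hence $\nabla\Bu\equiv0$ and $\Bu$ is a constant Cartesian vector $(C_1,C_2,C_3)$. Finally, for the ``furthermore'' statement I would note that such a constant has $u^r=C_1\cos\theta+C_2\sin\theta$, which is $\theta$-dependent and has $ru^r$ unbounded unless $C_1=C_2=0$; thus in cases (a), (b), (c) the hypotheses force $\Bu=(0,0,C_3)$. I expect the main obstacle to be the uniform-in-$R$ control of the pressure term after the Bogovskii substitution, and, for case (d), verifying that the Poincar\'e constant on the cross-section is sharp enough to absorb the convection at precisely the threshold $2\pi$.
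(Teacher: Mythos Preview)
The paper does not actually prove Proposition \ref{pro41}; it is quoted from \cite[Theorem 1.4]{aBGWX} with the remark that ``the proof can be found in \cite[Section 5]{aBGWX}''. So there is no in-paper argument to compare against directly. Your Saint-Venant outline is nonetheless the method of \cite{aBGWX}, and the present paper's own adaptations in Sections \ref{Sec3} and \ref{Sec5} let one check your sketch by analogy.

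Cases (a), (b), (c) are correctly described and match the paper's treatment of the parallel Navier-boundary results (Lemma \ref{Le:52}, Subsection \ref{Sec512}, Subsection \ref{Sec52}). For case (d), however, your plan has a gap: you propose to absorb the cubic convection term into the left side of the \emph{undifferentiated} identity \eqref{muinteeq} via a periodic Poincar\'e inequality on $u^r$, but in the fully three-dimensional setting there is no pointwise mean-zero condition on $u^r$ (only the integrated flux over the full cross-section $\int_0^{2\pi}\int_{\mathbb{T}}u^r\,d\theta\,dz$ vanishes), so no such Poincar\'e bound is available. The actual argument---visible in the paper's Subsection \ref{Sec54} for the Navier analogue---first differentiates the momentum equation in a horizontal direction $x_i$ ($i=1,2$), tests against $\varphi_R\partial_{x_i}\Bu$, and then absorbs the trilinear term $\int(\partial_{x_i}\Bu\cdot\nabla\partial_{x_i}\Bu)\cdot\Bu\,\varphi_R$ using a Wirtinger-type inequality on $\partial_{x_i}\Bu$ in the periodic variable; the sharp constant on $\mathbb{T}$ of period $1$ is what yields the threshold $2\pi$. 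Your claimed final inequality $Y(R)\le C_1Y'(R)+C_2R^{1/2}[Y'(R)]^{1/2}$ is correspondingly not what emerges in case (d): the differentiated argument produces the simpler $X(R)\le CR^{1/2}[X'(R)]^{1/2}$ for a second-order Dirichlet functional, after which one concludes $\partial_{x_1}\Bu=\partial_{x_2}\Bu\equiv0$ and reads off the constant structure directly.
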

	Now we are ready to prove Theorem \ref{th:02} in the case  $\alpha=0$.
	\begin{proof}[Proof  of Theorem \ref{th:02} (full slip boundary conditions case)]
		When $\alpha=0$,	the full slip boundary conditions are
		\begin{equation}\label{totalslbou}
			u^{3}=0, \ \ \
			\partial_{x_{3}}u^{1}=0, 
			\ \ \
			\partial_{x_{3}}u^{2}=0,
			\quad \text{at} \ x_{3}=0\, \,\text{and} \,\,1.
		\end{equation}
		We make even extension for $\Bu^{h}=(u^{1},  u^{2})$,  $P$ and odd extension for $u^{3}$ in  $x_{3}$-direction. More precisely, for $\Bx\in \mathbb{R}^{2}\times [-1, 1]$, let $\Bx_{h}=(x_{1}, x_{2})$,
		\[
		\tilde{P}(\Bx_{h},x_{3})
		=\left\{
		\begin{aligned}
			&P(\Bx_{h},x_{3}), 
			\quad \ \  \text{for}\ (\Bx_{h},x_{3})\in \mathbb{R}^{2}\times [0,1],\\
			&P(\Bx_{h},-x_{3}), \quad \text{for}\ (\Bx_{h},x_{3}) \in \mathbb{R}^{2}\times [-1,0]
		\end{aligned}
		\right.
		\]
		and  	
		\[
		\tilde{\Bu}(\Bx_{h},x_{3})
		=\left\{
		\begin{aligned}
			&(\Bu^{h}(\Bx_{h}, x_{3}), u^{3}(\Bx_{h}, x_{3})),  
			\ \ \ \ \ \ \ \  \  \
			\mbox{for}\  (\Bx_{h}, x_{3})\in \mathbb{R}^{2}\times [0, 1],   \\
			&(\Bu^{h}(\Bx_{h}, -x_{3}), -u^{3}(\Bx_{h},  -x_{3})), \ \ \ \mbox{for}\ (\Bx_{h}, x_{3})\in \mathbb{R}^{2}\times [-1, 0].
		\end{aligned}
		\right.
		\]
		It can be verified that  $(\tilde{\Bu},\tilde{P})$ is a solution of the  Navier-Stokes system \eqref{eqsteadyns} in $\mathbb{R}^{2}\times (-1, 1)$ satisfying
		\[
		\tilde{\Bu}|_{x_{3}=-1}=\tilde{\Bu}|_{x_{3}=1}, \quad \tilde{P}|_{x_{3}=-1}=\tilde{P}|_{x_{3}=1}.
		\]
		We extend the solution $(\tilde{\Bu}(\Bx_{h},x_{3}), \tilde{P})$ to  a periodic solution $(\bar{\Bu}(\Bx_{h},x_{3}),\bar{P})$ in  $x_{3}$-direction with period 2.   
		Applying Proposition \ref{pro41}  finishes the proof for cases (i) and (ii) of Theorem \ref{th:02} with $\alpha=0$. Here we note that the bound for $\|\Bu\|_{L^{\infty}\left(\Omega\right)}$ is $\pi$ in case (ii), instead of $2\pi$ in Proposition \ref{pro41}, since the period for $\bar{\Bu}(\Bx_{h},x_{3})$ is $2$ while the period is $1$ in  Proposition \ref{pro41}.
	\end{proof}
	\section{Bounded solutions in a slab with  Navier boundary conditions}\label{Sec5}	
	In this section, we mainly deal with the bounded solutions of Navier-Stokes system  \eqref{eqsteadyns} in a slab satisfying the Navier boundary conditions \eqref{eqNavierbou} with $\alpha>0$.  Compared to the no-slip boundary conditions case, the Poincar\'e inequality
	\[
	\|\Bu\|_{L^{2}(\OR)}\leq C\|\partial_{z}\Bu\|_{L^{2}(\OR)}
	\]
	may not hold here. This brings some technical difficulties. 
	One of our key observations is that the boundary condition  $\Bu\cdot \Bn=0$ first gives the Poincar\'e inequality for $u^z$. Furthermore, the Navier boundary conditions, together with the divergence free property of the velocity field,  help yield some nice estimate for the whole velocity field.
	
	\subsection{$u^\theta$ or $u^r$ is axisymmetric }\label{Sec51}
	In this subsection, we  consider the flows in a slab with Navier boundary conditions  under the assumption that $u^\theta$ or $u^r$  is axisymmetric.
	\subsubsection{$u^{\theta}$ is axisymmetric}
	\label{Sec511}
	First, we  show the Liouville-type theorem  when the Dirichlet integral is finite.
	\begin{lemma}\label{Le:52}
		Let $\Bu$ be a bounded smooth  solution to the  Navier-Stokes system \eqref{eqsteadyns} in a slab $\Omega=\mathbb{R}^{2}\times (0,1)$ with Navier boundary conditions \eqref{eqNavierbou}.
		Then $\Bu$ must be  zero,
		provided that $u^{\theta}$ is independent of $\theta$
		and  $\Bu$  has a finite Dirichlet integral in the slab, i.e.,
		\begin{equation}\label{Dintass}
			\int_{\mathbb{R}^{2}\times
				(0,1)}|\nabla\Bu|^{2}\, d\Bx
			<+\infty.
		\end{equation}
	\end{lemma}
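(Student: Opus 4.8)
The plan is to follow the Saint-Venant scheme of Lemma \ref{Le:32}, replacing the helical identities \eqref{dhelicdef} by consequences of the axisymmetry of $u^\theta$ together with the boundary condition $u^z=0$. Set $Y(R)=\int_\Omega\varphi_R|\nabla\Bu|^2\,d\Bx$; the goal is the differential inequality $Y(R)\le CR^{\frac12}[Y'(R)]^{\frac12}$, from which, exactly as in \eqref{eqA52}--\eqref{eqA54}, the finiteness assumption \eqref{Dintass} and the integration of $(C^2R)^{-1}\le(-1/Y)'$ force $\nabla\Bu\equiv0$. Testing the momentum equation with $\varphi_R\Bu$ and integrating by parts over $\Omega$, the only boundary contributions come from $z=0,1$, since $\varphi_R$ is supported in $r<R$. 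Using $u^z=0$ and the simplified Navier conditions \eqref{bounNaviersl1}, which read $\partial_z u^i=\mp2\alpha u^i$ there, one checks that $\partial_n\Bu\cdot\Bu=-2\alpha|\Bu|^2$ on both faces, so the Laplacian boundary term contributes $2\alpha\int_{\partial\Omega}\varphi_R|\Bu|^2\ge0$. Hence one obtains an identity of the same shape as \eqref{muinteeq} but with the extra nonnegative term $2\alpha\int_{\partial\Omega}\varphi_R|\Bu|^2$ on the left, which may simply be discarded for the upper bound on $Y(R)$.

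The key new input is a substitute for the helical control of $u^r$. Because $u^\theta$ is axisymmetric, $\partial_\theta u^\theta=0$, so the divergence-free condition gives $\partial_r(ru^r)=-r\partial_z u^z$; integrating in $z$ and using $u^z|_{z=0,1}=0$ yields $\partial_r\int_0^1 ru^r\,dz=0$. Since $ru^r$ vanishes on the axis, $\int_0^1 ru^r\,dz\equiv0$, which is exactly the analogue of \eqref{eqA117}. This has two consequences: the one-dimensional Poincar\'e inequality for zero-mean functions on $(0,1)$ gives $\|u^r\|_{L^2(\OR)}\le C\|\partial_z u^r\|_{L^2(\OR)}\le C\|\nabla\Bu\|_{L^2(\OR)}$, and the vanishing of $\int_{D_R}ru^r$ lets me invoke the Bogovskii map (Lemma \ref{Bogovskii}(1)) to produce, for each fixed $\theta$, a field $\Ps_{R,\theta}\in H^1_0(D_R;\mathbb{R}^2)$ with $\partial_r\Psi^r_{R,\theta}+\partial_z\Psi^z_{R,\theta}=ru^r$ and the estimates \eqref{eqA123}.

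With these in hand the right-hand side is estimated as in Lemma \ref{Le:32}. The convection term $\int_\Omega\frac{1}{2}|\Bu|^2\Bu\cdot\nabla\varphi_R$ and the gradient term are bounded by $CR^{\frac12}\|\nabla\Bu\|_{L^2(\OR)}$ using the $u^r$ Poincar\'e inequality and the trivial bound $\|\Bu\|_{L^2(\OR)}\le CR^{\frac12}\|\Bu\|_{L^\infty}$. For the pressure term I write $\int_\Omega P\Bu\cdot\nabla\varphi_R=-\int P\,ru^r$, substitute $ru^r=\partial_r\Psi^r_{R,\theta}+\partial_z\Psi^z_{R,\theta}$, integrate by parts (no boundary terms, as $\Ps_{R,\theta}\in H^1_0(D_R)$), and replace $(\partial_rP,\partial_zP)$ through the momentum equations \eqref{eqPGe}, as in \eqref{eqArP}--\eqref{eqAzP}. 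Here the axisymmetry of $u^\theta$ eliminates the $\frac{2}{r^2}\partial_\theta u^\theta$ terms outright. The one genuinely new difficulty is that, lacking the helical identity $\partial_\theta u^r=\kappa\partial_z u^r$, the angular terms such as $\int\frac{1}{r^2}\partial_\theta u^r\,\partial_\theta\Psi^r_{R,\theta}$ are only of size $C\|\nabla\Bu\|_{L^2(\OR)}^2=CY'(R)$ rather than $O(R^{\frac12}[Y'(R)]^{\frac12})$. This is precisely where the finite Dirichlet hypothesis \eqref{Dintass} enters: since $\|\nabla\Bu\|_{L^2(\OR)}=[Y'(R)]^{\frac12}$ is then bounded, one has $CY'(R)\le C[Y'(R)]^{\frac12}\le CR^{\frac12}[Y'(R)]^{\frac12}$, so all such terms are absorbed; the same device handles the analogous $\frac{1}{r^2}\partial_\theta u^z\,\partial_\theta\Psi^z_{R,\theta}$ term from \eqref{eqAzP}.

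Collecting the bounds gives $Y(R)\le CR^{\frac12}[Y'(R)]^{\frac12}$, and the ODE argument above forces $\nabla\Bu\equiv0$, so $\Bu$ is a constant vector. Finally, for a constant field $\partial_z u^i=0$, so the Navier conditions $\partial_z u^i=\mp2\alpha u^i$ with $\alpha>0$ force $u^1=u^2=0$, while $u^z=0$ on the boundary gives the third component zero; hence $\Bu\equiv0$. I expect the main obstacle to be the bookkeeping of the pressure/Bogovskii estimates and the verification that every angular term is genuinely absorbable through \eqref{Dintass}, the remainder being parallel to Lemma \ref{Le:32}.
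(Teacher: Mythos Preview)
Your proposal is correct and follows essentially the same Saint-Venant scheme as the paper. One small technical point worth flagging: when you claim the angular term $\int r^{-2}\partial_\theta u^r\,\partial_\theta\Psi^r_{R,\theta}$ is bounded by $C\|\nabla\Bu\|_{L^2(\OR)}^2$, note that $r^{-1}\partial_\theta u^r$ is \emph{not} itself a component of $\nabla\Bu$ in cylindrical coordinates---only the combination $r^{-1}\partial_\theta u^r-r^{-1}u^\theta$ is. The paper makes this explicit by adding and subtracting $r^{-2}u^\theta\partial_\theta\Psi^r_{R,\theta}$ (the added integral vanishes since $u^\theta$ is $\theta$-independent and $\Psi^r_{R,\theta}$ is $2\pi$-periodic in $\theta$), thereby reducing to the good combination $r^{-1}(\partial_\theta u^r-u^\theta)\le C|\nabla\Bu|$; without this trick the naive bound picks up an extra $O(R^{-1})$ term from $\|r^{-1}u^\theta\|_{L^2(\OR)}^2$, which is harmless for the ODE argument but should be accounted for. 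Your final step (using $\alpha>0$ to kill the constant horizontal components) is valid in the Section~5 setting; the paper instead uses that a constant vector with axisymmetric $u^\theta$ must have $u^1=u^2=0$, which also covers $\alpha=0$.
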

	\begin{proof}[Proof  of Lemma \ref{Le:52}]
		The proof is divided into two steps.
		
		\emph{Step 1.} \emph {Set up.}  Since  $\Bu$ is a smooth solution to the Navier-Stokes system \eqref{eqsteadyns} in $\Omega=\mathbb{R}^{2}\times (0,1)$ with Navier boundary conditions.
		Multiplying the momentum equation in \eqref{eqsteadyns} by $\varphi_{R}(r)\Bu$, one obtains
		\begin{equation}\label{eqA25}
			\int_{\Omega}-\Delta \Bu\cdot (\varphi_{R}\Bu) \,d\Bx +\int_{\Omega}( \Bu\cdot\nabla)\Bu\cdot(\varphi_{R}\Bu) \,d\Bx +\int_{\Omega}\nabla P\cdot(\varphi_{R}\Bu) \,d\Bx =0.
		\end{equation}	
		Integrating by parts and using the boundary conditions \eqref{eqNavierbou} give
		\begin{equation}\label{sec511eqANIii}
			\begin{split}
				& \int_{\Omega}-\Delta \Bu\cdot (\varphi_{R} \Bu)  \,d\Bx=-\int_{\Omega}2\text{div} \D(\Bu)\cdot (\varphi_{R}\Bu) \,d\Bx\\
				=& \int_{\Omega}2\D(\Bu): \D(\varphi_{R} \Bu) \,d\Bx-\int_{\partial\Omega}2\varphi_{R} \Bn\cdot \D(\Bu)\cdot \Bu \,dS \\
				=& \int_{\Omega}2\D (\Bu):\D (\varphi_{R}\Bu) \,d\Bx + 2\int_{\partial\Omega} \alpha\varphi_{R}\left|\Bu \right|^2 \,dS.
			\end{split}
		\end{equation}
		On the other hand, one has
		\begin{equation}\label{sec511eqidenti1}
			\int_{\Omega}-\Delta\Bu \cdot (\varphi_{R}\Bu)  \,d\Bx \\
			= \int_{\Omega}\varphi_{R}|\nabla\Bu|^{2}+\nabla \varphi_{R}\cdot \nabla   \Bu\cdot\Bu  \,d\Bx -\int_{\partial\Omega}\varphi_{R}\Bn \cdot \nabla\Bu\cdot \Bu \,dS.
		\end{equation}
		Note that on the boundary,
		\begin{equation}\nonumber
			2 \Bn \cdot \D(\Bu) \cdot \Bu - \Bn \cdot \nabla \Bu \cdot \Bu = \sum_{i,\, j=1}^3 n_{j} \partial_{x_i} u^{j} u^{i}.
		\end{equation}
		The impermeable boundary $\Bu\cdot\Bn=0$ implies that  $\nabla_{\Bt}(\Bu\cdot\Bn)=0$. Since the slab boundary is flat, one has $\nabla_{\Bt} \Bn \equiv 0$ on $ \partial \Omega$. Hence, it holds that
		\begin{equation}\label{sec511flatsid2}
			\sum_{i,\, j=1}^{3}n_{j} \partial_{x_i}u^{j} u^{i}
			= \Bu \cdot \nabla_{\Bt} (\Bu \cdot \Bn) - \Bu \cdot \left(  \Bu \cdot \nabla_{\Bt} \Bn\right)=0, \quad \text{on} \  \partial \Omega.
		\end{equation}
		From \eqref{sec511eqANIii}-\eqref{sec511flatsid2},	one has
		\begin{equation}
			\int_{\Omega}\varphi_{R}|\nabla\Bu|^{2} \,d\Bx= \int_{\Omega}2\D (\Bu):\D (\varphi_{R}\Bu) \,d\Bx-\int_{\Omega}\nabla\varphi_{R}\cdot \nabla\Bu\cdot \Bu\,d\Bx.
		\end{equation}
		Therefore,	it holds that
		\begin{equation*}\label{12eqBNIii}
			\begin{split}
				& 	\int_{\Omega}\varphi_{R}|\nabla\Bu|^{2} \,d\Bx+2\alpha\int_{\partial\Omega} \varphi_{R}\left|\Bu \right|^2 \, dS\\
				=& \int_{\Omega}2\D(\Bu): \D(\varphi_{R} \Bu) \,d\Bx+2\alpha\int_{\partial\Omega} \varphi_{R}\left|\Bu \right|^2 \, dS-\int_{\Omega}\nabla\varphi_{R}\cdot \nabla\Bu\cdot \Bu\,d\Bx\\
				=& \int_{\Omega}-\Delta \Bu\cdot (\varphi_{R}\Bu) \,d\Bx-\int_{\Omega}\nabla\varphi_{R}\cdot \nabla\Bu\cdot \Bu\,d\Bx\\
				=& \int_{\Omega}-(\Bu\cdot \nabla)\Bu\cdot (\varphi_{R}\Bu) \,d\Bx-\int_{\Omega}\nabla P\cdot (\varphi_{R}\Bu) \,d\Bx-\int_{\Omega}\nabla\varphi_{R}\cdot \nabla \Bu\cdot \Bu\,d\Bx.
			\end{split}
		\end{equation*}
		Consequently, integrating by parts yields
		\begin{equation}\label{12eqBNIii}
			\begin{split}
				& 	\int_{\Omega}\varphi_{R}|\nabla\Bu|^{2} \,d\Bx+2\alpha\int_{\partial\Omega} \varphi_{R}\left|\Bu \right|^2 \, dS\\
				=&\int_{\Omega}\frac{1}{2}|\Bu|^{2}\Bu\cdot \nabla\varphi_{R} \,d\Bx + \int_\Omega P \Bu \cdot \nabla \varphi_R \,d\Bx-\int_{\Omega}\nabla\varphi_{R}
				\cdot \nabla\Bu\cdot \Bu \,d\Bx.
			\end{split}
		\end{equation}
		Therefore,
		\begin{equation}\label{ineqinte}
			\int_{\Omega}\varphi_{R}\left| \nabla \Bu \right|^2 \,d\Bx
			\leq 
			\left|\int_{\Omega}\nabla\varphi_{R}\cdot \nabla\Bu\cdot\Bu  \,d\Bx\right|+
			\left|\int_{\Omega}\frac{1}{2}\left|\Bu \right|^2\Bu\cdot \nabla \varphi_{R}\,d\Bx\right| +\left|\int_{\Omega} P\Bu\cdot \nabla\varphi_{R} \,d\Bx\right|.
		\end{equation}	
		
		Using the divergence free property of $\Bu$ and the fact that $u^{\theta}$ is independent of $\theta$, one has
		\begin{equation}\label{eqAcaxi4}
			\partial_{r}\int_{0}^{1}ru^{r}\,dz=-\int_{0}^{1}\partial_{z}(ru^{z})\, dz=0.
		\end{equation}
		This implies
		\begin{equation}\label{eqAident4}
			\int_{0}^{1}ru^{r} \,dz=0 \quad  \text{and} \quad \int_{0}^{1} \int_{R-1}^{R}ru^{r}\, drdz=0.
		\end{equation}
		As in the proof of Lemma \ref{Le:32}, there is a vector valued function $\Ps_{R,\theta}(r,z)\in H^{1}_{0}(D_{R};\mathbb{R}^{2})$ which satisfies \eqref{eqA118} together with the estimates \eqref{eqA123} and
		\begin{equation}\label{eqB124}
			\|(\partial_{\theta}\partial_{r}, \partial_\theta \partial_z) \Ps_{R,\theta}\|_{L^{2}(\DR)} \leq
			C\|r\partial_{\theta}
			u^{r}\|_{L^{2}(\DR)}
			\leq CR^{\frac{1}{2}}\|\partial_{\theta}u^{r}\|_{L^{2}(\OR)}.
		\end{equation}
		It follows from \eqref{eqA118} and integration by parts that one has
		\begin{equation}\label{Bpreeqes}
			\begin{split}
				\int_{\Omega}P\Bu\cdot \nabla \varphi_{R} \,d\Bx
				=&
				-\int_{0}^{1}
				\int_{0}^{2\pi}
				\int_{R-1}^{R}P\cdot ru^{r} \, drd\theta dz\\
				=&
				-\int_{0}^{1}
				\int_{0}^{2\pi}
				\int_{R-1}^{R}P(\partial_{r}\Psi_{R,\theta}^{r}+\partial_{z}\Psi_{R,\theta}^{z}) \, drd\theta dz\\
				=& \int_{0}^{1}\int_{0}^{2\pi}
				\int_{R-1}^{R}(\partial_{r}P\Psi_{R,\theta}^{r}+ \partial_{z}P\Psi_{R,\theta}^{z})\, drd\theta dz.
			\end{split}
		\end{equation}
		Since $u^{\theta}$ is independent of $\theta$, we rewrite the equation \eqref{eqArP} as
		\begin{equation}\label{reeqArP}
			\begin{split}
				& \int_{0}^{1}\int_{0}^{2\pi}\int_{R-1}^{R}\partial_{r}P\Psi^{r}_{R,\theta}\, drd\theta dz\\
				=& -\int_{0}^{1}\int_{0}^{2\pi}\int_{R-1}^{R}\left[\partial_{r}u^{r}\partial_{r}\Psi^{r}_{R,\theta}+\partial_{z}u^{r}\partial_{z}\Psi^{r}_{R,\theta}
				+\dfrac{1}{r^2}\left(\partial_{\theta}u^{r}-u^{\theta}\right)\partial_{\theta}\Psi^{r}_{R,\theta}\right]\, drd\theta dz \\
				&+\int_{0}^{1}\int_{0}^{2\pi}\int_{R-1}^{R}\left[\left(\dfrac{1}{r}\partial_{r}-\dfrac{1}{r^2}\right)u^{r}
				\right]\Psi^{r}_{R,\theta}\, drd\theta dz	\\
				&-\int_{0}^{1}\int_{0}^{2\pi}\int_{R-1}^{R}\left[\left(u^{r}\partial_{r}+\dfrac{u^{\theta}}{r}\partial_{\theta}+u^{z}\partial_{z}\right)u^{r}-\dfrac{(u^{\theta})^2}{r}\right]\Psi_{R,\theta}^{r}\, drd\theta dz,
			\end{split}
		\end{equation}
		where we have used the property $\displaystyle\int_{0}^{1}\int_{0}^{2\pi}\int_{R-1}^{R}r^{-2}u^{\theta}\partial_{\theta}\Psi^{r}_{R,\theta}\, drd\theta dz=0$.
		Similarly, \eqref{eqAzP} still holds.
		
		\emph{Step 2.} \emph{Saint-Venant type estimate.}
		First, note that in the cylindrical coordinates the matrix $\nabla\Bu$ can be written as a form of tensor product as follows \cite[Appendix]{ALPY},
		\[
		\begin{split}
			\nabla\Bu
			= &
			\,\,\, \partial_{r}u^{r}\Be_{r}\otimes\Be_{r}+\left(\frac{1}{r}\partial_{\theta}u^{r}-\frac{u^{\theta}}{r}\right)\Be_{r}\otimes\Be_{\theta}+\partial_{z}u^{r}\Be_{r}\otimes\Be_{z}  \\
			&+\partial_{r}u^{\theta}\Be_{\theta}\otimes\Be_{r}+\left(\frac{1}{r}\partial_{\theta}u^{\theta}+\frac{u^{r}}{r}\right)\Be_{\theta}\otimes\Be_{\theta}+\partial_{z}u^{\theta}\Be_{\theta}\otimes\Be_{z} \\
			&+\partial_{r}u^{z}\Be_{z}\otimes\Be_{r}+\frac{1}{r}\partial_{\theta}u^{z}\Be_{z}\otimes\Be_{\theta}+\partial_{z}u^{z}\Be_{z}\otimes\Be_{z}.
		\end{split}
		\]
		So we have
		\begin{equation}\label{divfes1}
			|\partial_{r}u^{r}|+|\partial_{r}u^{\theta}|+|\partial_{r}u^{z}|
			+|r^{-1}\partial_{\theta}u^{z}|+|\partial_{z}u^{r}|+|\partial_{z}u^{\theta}|+|\partial_{z}u^{z}|\leq C|\nabla\Bu|,
		\end{equation}
		whereas
		\begin{equation}\label{divfes2}
			|r^{-1}\partial_{\theta}u^{r}-r^{-1}u^{\theta}|+|r^{-1}\partial_{\theta}u^{\theta}+r^{-1}u^{r}|\leq C|\nabla \Bu|.
		\end{equation}
		Therefore, we have
		\begin{equation}\label{divfes3}
			|r^{-1}\partial_{\theta}u^{r}|
			+|r^{-1}\partial_{\theta}u^{\theta}|\leq
			C|\nabla\Bu|+|r^{-1}u^{\theta}|+|r^{-1}u^{r}|.
		\end{equation}
		
		Now we are ready to estimate the right hand of \eqref{ineqinte}. The first two terms of \eqref{ineqinte} could be estimated  as \eqref{basces1}-\eqref{basces2}, so we just need to estimate the right hand of \eqref{Bpreeqes}.
		For the terms on the right hand of \eqref{reeqArP},	by virtue of \eqref{eqA123},  \eqref{eqB124} and \eqref{divfes1}-\eqref{divfes3}, one has
		\begin{equation}\label{estimnew1}
			\begin{split}
				&\left|\int_{0}^{1}\int_{0}^{2\pi}\int_{R-1}^{R}\dfrac{1}{r^2}\left(\partial_{\theta}u^{r}-u^{\theta}\right)\partial_{\theta}\Psi_{R,\theta}^{r}\, drd\theta dz \right|	\\
				\leq& CR^{-\frac{1}{2}}\| \nabla\Bu\|_{L^{2}(\OR)}
				\cdot R^{\frac{1}{2}} \|r^{-1}\partial_{\theta}u^{r}\|_{L^{2}(\OR)}\\
				\leq& C\| \nabla\Bu\|_{L^{2}(\OR)}\left(
				\|r^{-1}(\partial_{\theta}u^{r}-u^{\theta})\|_{L^{2}(\OR)}+\|r^{-1}u^{\theta}\|_{L^{2}(\OR)}\right)\\
				\leq& C
				\|\nabla\Bu\|_{L^{2}(\OR)}\left(
				\|\nabla\Bu\|_{L^{2}(\OR)}+R^{-\frac{1}{2}}\|\Bu\|_{L^{\infty}(\OR)}\right)\\
				\leq&  C
				\| \nabla\Bu\|_{L^{2}(\OR)},
			\end{split}
		\end{equation}
		where the last inequality is due to the finite Dirichlet integral assumption \eqref{Dintass}.
		Furthermore, one has
		\begin{equation}\label{estimnew2}
			\begin{split}
				&\left|\int_{0}^{1}\int_{0}^{2\pi}\int_{R-1}^{R}r^{-1}\left[u^{\theta}\partial_{\theta}u^{r}-(u^{\theta})^{2}\right]\Psi_{R,\theta}^{r}\, drd\theta dz \right|\\
				\leq& CR^{-\frac{1}{2}}\|\Bu\|_{L^{\infty}(\OR)}\| r^{-1}\left(\partial_{\theta}u^{r}-u^{\theta}\right)\|_{L^{2}(\OR)}
				\cdot R^{\frac{1}{2}} \|u^{r}\|_{L^{2}(\OR)}\\
				\leq & 
				CR^{\frac{1}{2}}\| \nabla\Bu\|_{L^{2}(\OR)}.
			\end{split}
		\end{equation}
		Combining the estimates  \eqref{eqA126}, \eqref{eqA128}-\eqref{eqA129} and \eqref{estimnew1}-\eqref{estimnew2}, one derives
		\begin{equation}\label{eqB130}
			\left|\int_{0}^{1}\int_{0}^{2\pi}\int_{R-1}^{R}\partial_{r}P\Psi^{r}_{R,\theta}\, dr d\theta dz\right|
			\leq CR^{\frac{1}{2}}\|\nabla\Bu\|_{L^{2}(\OR)}.
		\end{equation}
		Furthermore, note that
		\begin{equation}\label{diffurtersec521}
			\begin{split}
				&\left|\int_{0}^{1}\int_{0}^{2\pi}
				\int_{R-1}^{R}\dfrac{1}{r^2}\partial_{\theta}u^{z}\partial_{\theta}
				\Psi_{R,\theta}^{z}\,drd{\theta}dz
				\right|\\
				\leq& CR^{-\frac{1}{2}}
				\| \nabla\Bu\|_{L^{2}(\OR)}
				\cdot R^{\frac{1}{2}}\|r^{-1}\partial_{\theta}u^{r}\|_{L^{2}(\OR)}\\
				\leq& C
				\| \nabla\Bu\|_{L^{2}(\OR)}
				\left( \|r^{-1}(\partial_{\theta}u^{r}-u^{\theta})\|_{L^{2}(\OR)}+\|r^{-1}u^{\theta}\|_{L^{2}(\OR)}\right)\\
				\leq& C
				\|\nabla\Bu\|_{L^{2}(\OR)}\left(
				\|\nabla\Bu\|_{L^{2}(\OR)}+R^{-\frac{1}{2}}\|\Bu\|_{L^{\infty}(\OR)}\right)
				\\
				\leq & C
				\|\nabla\Bu\|_{L^{2}(\OR)},
			\end{split}
		\end{equation}
		where the last inequality is due to the finite Dirichlet integral assumption \eqref{Dintass}.
		The estimates for other terms  on the right hand of \eqref{eqAzP} is  quite similar to that for \eqref{eqA126} and \eqref{eqA129}.  So it holds that
		\begin{equation}\label{eqB131}
			\left|\int_{0}^{1}\int_{0}^{2\pi}\int_{R-1}^{R}\partial_{z}P\Psi^{z}_{R,\theta}\, dr d\theta dz\right|
			\leq CR^{\frac{1}{2}}\|\nabla\Bu\|_{L^{2}(\OR)}.
		\end{equation}
		Hence one arrives at
		\begin{equation}\label{sec521in}
			Y(R)\leq CR^{\frac{1}{2}} \left[Y^{\prime}(R)\right]^{\frac{1}{2}},
		\end{equation}
		where $Y(R)$ is defined in \eqref{gDinsgro}.
		The same argument as in the proof for Lemma \ref{Le:32} yields that  $\nabla\Bu\equiv 0$ and $\Bu=(0,0,C)$  when  $u^{\theta}$ is axisymmetric. This, together with the boundary  $u^{z}|_{z=0,\, 1}=0$ yields $C=0$. Hence the proof of Lemma \ref{Le:52} is completed.
	\end{proof}
	Next we show the Liouville theorem for bounded flows.
	\begin{proof}[Proof  for Case (a) of Theorem \ref{th:02} when $u^{\theta}$ is axisymmetric]
		
		The  proof is almost the same as that for Lemma \ref{Le:52}, except that
		\begin{equation}\label{Aestimnew}
			\left|\int_{0}^{1}\int_{0}^{2\pi}
			\int_{R-1}^{R}\dfrac{1}{r^2}\left(\partial_{\theta}u^{r}-u^{\theta}\right)\partial_{\theta}\Psi_{R,\theta}^{r}\, dr d\theta dz \right| \leq C
			\|\nabla\Bu\|^{2}_{L^{2}(\OR)}+C
			\|\nabla\Bu\|_{L^{2}(\OR)}
		\end{equation}
		and
		\begin{equation}\label{cadiersec521}
			\left|\int_{0}^{1}\int_{0}^{2\pi}\int_{R-1}^{R}\dfrac{1}{r^2}\partial_{\theta}u^{z}\partial_{\theta}\Psi_{R,\theta}^{z}\, drd{\theta}dz\right|
			\leq C \|\nabla\Bu\|^{2}_{L^{2}(\OR)}+C
			\|\nabla\Bu\|_{L^{2}(\OR)},
		\end{equation}
		which have been obtained in  \eqref{estimnew1} and \eqref{diffurtersec521}, respectively. According to  Lemma \ref{Le:NAvbouns} and Lemma \ref{Le:52}, following the same steps as that for  Theorem \ref{th:01}, one obtains $\Bu\equiv 0$. Therefore, the proof for  Case (a) of Theorem \ref{th:02}  when $u^{\theta}$ is axisymmetric is completed.
	\end{proof}	
	
	\subsubsection{$u^{r}$ is  axisymmetric}\label{Sec512}
	The proof is almost the same as that in Subsection \ref{Sec511}, where $u^{\theta}$ is axisymmetric.
	\begin{proof}[Proof for Case (a) of Theorem \ref{th:02} when $u^{r}$ is axisymmetric]  First we  assume that the  Dirichlet integral is finite. Using the divergence free property of $\Bu$,  for $0\leq r<\infty$, one has
		$$
		\partial_{r} \int_{0}^{1}\int_{0}^{2\pi}ru^{r}\,d\theta dz
		=-\int_{0}^{1}\int_{0}^{2\pi}\partial_{\theta}u^{\theta}+\partial_{z}(ru^{z})\, d\theta dz
		=0.
		$$
		Since $u^{r}$ is independent of $\theta$, it holds that
		\begin{equation}
			\int_{0}^{1}ru^{r}\, dz=\frac{1}{2\pi}\int_{0}^{1}\int_{0}^{2\pi}ru^{r}\, d\theta   dz=0  \quad \text{and} \quad \int_{0}^{1}\int_{R-1}^{R}u^{r}\, drdz=0.
		\end{equation}
		Similarly, as in the proof of Lemma \ref{Le:32}, there is a vector valued function $\Ps_{R}(r,z)\in H^{1}_{0}(D_{R};\mathbb{R}^{2})$ which satisfies \eqref{eqA118} together with the estimates \eqref{eqA123} and \eqref{eqB124}.
		One  obtains
		\begin{equation}\label{sec522Bpreeqes}
			\begin{split}
				\int_{\Omega}P\Bu\cdot \nabla \varphi_{R} \,d\Bx
				=&-\int_{0}^{1}
				\int_{0}^{2\pi}
				\int_{R-1}^{R}P\cdot ru^{r} \, dr d\theta dz\\
				=&-\int_{0}^{1}
				\int_{0}^{2\pi}
				\int_{R-1}^{R}P(\partial_{r}\Psi_{R}^{r}+\partial_{z}\Psi_{R}^{z}) \, dr d\theta dz\\
				=& \int_{0}^{1}\int_{0}^{2\pi}
				\int_{R-1}^{R}(\partial_{r}P\Psi_{R}^{r}+ \partial_{z}P\Psi_{R}^{z})\, drd\theta dz.
			\end{split}
		\end{equation}
		Since $u^{r}$ is independent of $\theta$,
		we rewrite the equation \eqref{eqArP} as \begin{equation}\label{sec512reeqArP1dd}
			\begin{split}
				& \int_{0}^{1}\int_{0}^{2\pi}\int_{R-1}^{R}\partial_{r}P\Psi^{r}_{R}\, drd\theta dz\\
				=& -\int_{0}^{1}\int_{0}^{2\pi}\int_{R-1}^{R}\left(\partial_{r}u^{r}\partial_{r}\Psi^{r}_{R}+\partial_{z}u^{r}\partial_{z}\Psi^{r}_{R}\right)\, drd\theta dz \\
				&+\int_{0}^{1}\int_{0}^{2\pi}\int_{R-1}^{R}\left[\left(\dfrac{1}{r}\partial_{r}-\dfrac{1}{r^2}\right)u^{r}
				-\frac{2}{r^{2}}\partial_{\theta}u^{\theta}\right]\Psi^{r}_{R}\, drd\theta dz	\\
				&-\int_{0}^{1}\int_{0}^{2\pi}\int_{R-1}^{R}\left[\left(u^{r}\partial_{r}+u^{z}\partial_{z}\right)u^{r}-\dfrac{(u^{\theta})^2}{r}\right]\Psi_{R}^{r}\, drd\theta dz.
			\end{split}
		\end{equation}
		Note that \eqref{eqAzP} still holds by replacing  $\Psi_{R, \theta}^{z}$ with $\Psi_{R}^{z}$.
		Now we are ready to estimate the terms on the right hand of \eqref{sec512reeqArP1dd}.
		Since $\Psi^{r}_{R}$ is independent of $\theta$, one has
		\begin{equation}\label{reestse52122}
			\int_{0}^{1}\int_{0}^{2\pi}\int_{R-1}^{R}\frac{2}{r^{2}}\partial_{\theta}u^{\theta}\Psi^{r}_{R}\, drd\theta dz=0.
		\end{equation}
		This, together with \eqref{eqA126}, \eqref{eqA128} and \eqref{eqA129} gives
		\begin{equation}\label{sec52eqB130}
			\left|\int_{0}^{1}\int_{0}^{2\pi}\int_{R-1}^{R}\partial_{r}P\Psi^{r}_{R}
			\, drd\theta dz\right|
			\leq CR^{\frac{1}{2}}\|\nabla\Bu\|_{L^{2}(\OR)}.
		\end{equation}
		Similarly, as in the proof  of Lemma \ref{Le:52},   one  obtains
		\begin{equation}\label{sec522eqB131}
			\left|\int_{0}^{1}\int_{0}^{2\pi}\int_{R-1}^{R}\partial_{z}P\Psi^{z}_{R}\, dr d\theta dz\right|
			\leq CR^{\frac{1}{2}}\|\nabla\Bu\|_{L^{2}(\OR)},
		\end{equation}
		provided that the  Dirichlet integral is finite.
		
		Combining the estimates  \eqref{basces1}-\eqref{basces2} and \eqref{sec52eqB130}-\eqref{sec522eqB131}, one arrives at \eqref{sec521in} with $Y(R)$ defined in \eqref{gDinsgro}. According to the proof for Lemma \ref{Le:32}, one obtains  $\nabla\Bu\equiv 0$. Furthermore, it follows from the axisymmetry  of $u^{r}$ and the Navier boundary conditions \eqref{eqA10} that the solution $\Bu$ must be zero. So we finish the proof for the  Liouville-type theorem when $u^{r}$ is axisymmetric and the   Dirichlet integral is finite. Following the same steps as in the proof for Case (a) of Theorem \ref{th:02} with   axisymmetric $u^{\theta}$, we can remove the  finite Dirichlet integral assumption.
		This finishes the proof for Case (a) of Theorem \ref{th:02} when $u^{r}$ is axisymmetric.
	\end{proof}
	\subsection{General 3D solutions  with $ru^{r}$ decaying to zero}\label{Sec52}
	
	This subsection is devoted to the study for general solutions. Since $\partial_{\theta} P$ doesn't have the same scaling as $\partial_{r} P$ and $\partial_{z} P$, so we have to deal with the term $\partial_{\theta} P$ carefully.	
	\begin{proof}[Proof for Case (b) of Theorem \ref{th:02}]
		The proof is divided into two steps.
		
		\emph{Step 1.} \emph {Set up.}
		The inequality \eqref{ineqinte} still holds. In this case,
		\begin{equation}\label{eqA55}
			\int_{\Omega}P\Bu \cdot \nabla \varphi_{R}\, d\Bx
			=- \int_{0}^{1}\int_{0}^{2\pi}
			\int_{R-1}^{R}Pu^{r}r \, dr d \theta dz.
		\end{equation}
		It follows from the divergence free condition that for every fixed $r\geq 0$, one has	
		\begin{equation}\label{eqA56}
			\partial_{r}\int_{0}^{1}\int_{0}^{2\pi}r u^{r}\, d \theta dz
			=-\int_{0}^{1}\int_{0}^{2\pi}\partial_{\theta} u^{\theta} \,d \theta dz
			-\int_{0}^{1} \int_{0}^{2\pi}
			\partial_{z}(ru^{z})\, d \theta dz
			=0.
		\end{equation}
		And then it holds that
		\begin{equation}\label{eqA57}
			\int_{0}^{1}\int_{0}^{2\pi}ru^{r}\, d\theta dz=0,\,\,\,
			\int_{0}^{1}\int_{0}^{2\pi}u^{r}\, d\theta dz=0
			\,\,\,\,\, \text{and}  \,\,\,\,\,
			\int_{0}^{1}\int_{0}^{2\pi}\int_{R-1}^{R}ru^{r}\, drd\theta dz=0.
		\end{equation}
		By virtue of  Lemma \ref{Bogovskii},
		there exists a vector valued function $\Ps_{R}(r,\theta,z)\in H^{1}_{0}(\DR;\mathbb{R}^{3})$ satisfying
		\begin{equation}\label{eqA59}
			\partial_{r}\Psi_{R}^{r}
			+\partial_{\theta}\Psi_{R}^{\theta}
			+\partial_{z}\Psi_{R}^{z}
			=ru^{r}
		\end{equation}
		and
		\begin{equation}\label{eqA60}
			\|(\partial_{r}, \partial_{\theta}, \partial_{z})\Ps_{R}\|_{L^{2}(\DR)}
			\leq C\|ru^{r}\|_{L^{2}(\DR)}
			\leq CR^{\frac{1}{2}}\|u^{r}\|_{L^{2}(\OR)}.
		\end{equation}
		From \eqref{eqA55} and \eqref{eqA59},   one obtains
		\begin{equation}\label{eqA61}
			\begin{split}
				\int_{\Omega}P\Bu\cdot \nabla \varphi_{R}\, d\Bx
				=&-\int_{0}^{1}
				\int_{0}^{2\pi}
				\int_{R-1}^{R}P(\partial_{r}\Psi_{R}^{r}+\partial_{\theta}\Psi_{R}^{\theta}+\partial_{z}\Psi_{R}^{z})\,dr d\theta dz\\
				=& \int_{0}^{1}\int_{0}^{2\pi}
				\int_{R-1}^{R}(\partial_{r}P\Psi_{R}^{r}+\partial_{\theta}P\Psi_{R}^{\theta}+ \partial_{z}P\Psi_{R}^{z})\, dr d\theta dz.
			\end{split}
		\end{equation}
		Furthermore, it follows from the momentum equations in \eqref{eqA8} that one has
		\begin{equation}\label{eqA62}
			\begin{split}
				& \int_{0}^{1}\int_{0}^{2\pi}\int_{R-1}^{R}\partial_{r}P\Psi^{r}_{R} \, dr d \theta dz\\
				=& -\int_{0}^{1}\int_{0}^{2\pi}\int_{R-1}^{R}\left[\partial_{r}u^{r}\partial_{r}\Psi^{r}_{R}+\partial_{z}u^{r}\partial_{z}\Psi^{r}_{R}+\dfrac{1}{r^2}\left(\partial_{\theta}u^{r}-2u^{\theta}\right)\partial_{\theta}\Psi^{r}_{R}\right]\, dr d\theta dz \\
				&+\int_{0}^{1}\int_{0}^{2\pi}\int_{R-1}^{R}\left[\left(\dfrac{1}{r}\partial_{r}-\dfrac{1}{r^2}\right)u^{r}\right]\Psi^{r}_{R} \, dr d\theta dz	\\
				&-\int_{0}^{1}\int_{0}^{2\pi}\int_{R-1}^{R}\left[\left(u^{r}\partial_{r}+\dfrac{u^{\theta}}{r}\partial_{\theta}+u^{z}\partial_{z}\right)u^{r}-\dfrac{(u^{\theta})^2}{r}\right]\Psi_{R}^{r}\, dr d\theta dz,
			\end{split}
		\end{equation}
		\begin{equation}\label{eqA63}
			\begin{split}
				& \int_{0}^{1}\int_{0}^{2\pi}\int_{R-1}^{R}\partial_{\theta}P\Psi^{\theta}_{R} \,dr d\theta dz\\
				=& -\int_{0}^{1}\int_{0}^{2\pi}\int_{R-1}^{R}\left[r\left(\partial_{r}u^{\theta}\partial_{r}\Psi^{\theta}_{R}+\partial_{z}u^{\theta}\partial_{z}\Psi^{\theta}_{R}\right)+r^{-1}(\partial_{\theta}u^{\theta}+2u^{r})\partial_{\theta}\Psi^{\theta}_{R}\right] \, dr d\theta dz \\
				&-\int_{0}^{1}\int_{0}^{2\pi}\int_{R-1}^{R}\dfrac{u^{\theta}}{r}\Psi^{\theta}_{R}\, dr d\theta dz	\\
				&-\int_{0}^{1}\int_{0}^{2\pi}\int_{R-1}^{R}\left[r\left(u^{r}\partial_{r}+\dfrac{u^{\theta}}{r}\partial_{\theta}+u^{z}\partial_{z}\right)u^{\theta}+u^{\theta}u^{r}\right]\Psi_{R}^{\theta} \, dr d\theta dz
			\end{split}
		\end{equation}
		and
		\begin{equation}\label{eqA64}
			\begin{split}
				& \int_{0}^{1}\int_{0}^{2\pi}\int_{R-1}^{R}\partial_{z}P\Psi^{z}_{R} \, drd\theta dz\\
				=&	-\int_{0}^{1}\int_{0}^{2\pi}\int_{R-1}^{R}\left(\partial_{r}u^{z}\partial_{r}\Psi_{R}^{z}+\partial_{z}u^{z}\partial_{z}\Psi_{R}^{z}+\dfrac{1}{r^2}\partial_{\theta}u^{z}\partial_{\theta}\Psi_{R}^{z}\right)\, drd{\theta}dz\\
				&-\int_{0}^{1}\int_{0}^{2\pi}\int_{R-1}^{R}\left[\left(u^{r}\partial_{r}+\dfrac{u^{\theta}}{r}\partial_{\theta}+u^{z}\partial_{z}-\dfrac{1}{r}\partial_{r}\right)u^{z}\right]\Psi_{R}^{z}\, drd\theta dz.
			\end{split}
		\end{equation}
		\emph{Step 2.} \emph{Saint-Venant type estimate.} Now we estimate the right hand of \eqref{ineqinte}, \eqref{eqA62}, \eqref{eqA63}, \eqref{eqA64} carefully.  First, due to the boundedness of the velocity, one has
		\begin{equation}\label{eqA65}
			\left|\int_{\Omega}\nabla\varphi_{R}\cdot \nabla\Bu\cdot\Bu \,d\Bx \right|
			\leq C\|\nabla\Bu\|_{L^{2}(\OR)}\|\Bu\|_{L^{2}(\OR)}
			\leq CR^{\frac{1}{2}}\|\nabla\Bu\|_{L^{2}(\OR)}
		\end{equation}
		and
		\begin{equation}\label{eqA66}
			\left|\int_{\Omega}\frac{1}{2}\left|\Bu \right|^2\Bu\cdot \nabla \varphi_{R}\, d\Bx\right|
			\leq C\|\Bu\|^{2}_{L^{\infty}(\OR)}\|u^{r}\|_{L^{1}(\OR)}
			\leq
			CR\|u^{r}\|_{L^{\infty}(\OR)},
		\end{equation}
		where $\OR$ is defined at the beginning of Section \ref{Sec2}.
		Regarding  \eqref{eqA62}, one has
		\begin{equation}\label{eqA67}
			\begin{split}
				&\left| \int_{0}^{1}\int_{0}^{2\pi}\int_{R-1}^{R}\left(\partial_{r}u^{r}\partial_{r}\Psi_{R}^{r}+\partial_{z}u^{r}\partial_{z}\Psi_{R}^{r}\right)\, dr d \theta dz\right|\\
				\leq& CR^{-\frac{1}{2}}\|\nabla\Bu\|_{L^{2}(\OR)}\cdot R^{\frac{1}{2}}\|u^{r}\|_{L^{2}(\OR)}\\
				\leq&  CR^{\frac{1}{2}}\|\nabla\Bu\|_{L^{2}(\OR)} \|u^{r}\|_{L^{\infty}(\OR)},
			\end{split}
		\end{equation}
		
		\begin{equation}\label{eqAB6712}
			\begin{split}
				&\left| \int_{0}^{1}\int_{0}^{2\pi}\int_{R-1}^{R}\left[r^{-2}\left(\partial_{\theta}u^{r}-2u^{\theta}\right)\right]\partial_{\theta}\Psi_{R}^{r}\, dr d\theta dz\right|\\
				\leq& C\left(R^{-\frac{3}{2}}\|\nabla\Bu\|_{L^{2}(\OR)}+R^{-\frac{5}{2}}\|u^{\theta}\|_{L^{2}(\OR)}\right)\cdot R^{\frac{1}{2}}\|u^{r}\|_{L^{2}(\OR)}\\
				\leq&
				CR^{-\frac{1}{2}}\|\nabla\Bu\|_{L^{2}(\OR)}\|u^{r}\|_{L^{\infty}(\OR)}+CR^{-1}\|u^{r}\|_{L^{\infty}(\OR)}
			\end{split}
		\end{equation}
		and
		\begin{equation}\label{eqA68}
			\begin{split}
				&\left|\int_{0}^{1}\int_{0}^{2\pi}\int_{R-1}^{R}\left[\left(\dfrac{1}{r}\partial_{r}-\dfrac{1}{r^2}\right)u^{r}\right]\Psi_{R}^{r}\,dr d\theta dz \right|\\
				\leq& CR^{-\frac{3}{2}}\| \nabla\Bu\|_{L^{2}(\OR)}
				\cdot R^{\frac{1}{2}} \|u^{r}\|_{L^{2}(\OR)} 	\\
				\leq& CR^{-\frac{1}{2}}\| \nabla\Bu\|_{L^{2}(\OR)}\|u^{r}\|_{L^{\infty}(\OR)}.
			\end{split}
		\end{equation}
		Furthermore, it holds that
		\begin{equation}\label{eqA69}
			\begin{split}
				&\left|\int_{0}^{1}\int_{0}^{2\pi}\int_{R-1}^{R}\left[\left(u^{r}\partial_{r}+u^{z}\partial_{z}\right)u^{r}\right]\Psi_{R}^{r}\, drd\theta dz \right|	\\
				\leq& C\| (u^{r},u^{z})\|_{L^{\infty}(\DR)}
				\| (\partial_{r},  \partial_{z})u^{r}\|_{L^{2}(\DR)}\|\Psi_{R}^{r}\|_{L^{2}(\DR)}\\
				\leq & CR^{-\frac{1}{2}}\| \nabla\Bu\|_{L^{2}(\OR)}\cdot R^{\frac{1}{2}}\|u^{r}\|_{L^{2}(\OR)}\\
				\leq & CR^{\frac{1}{2}}\| \nabla\Bu\|_{L^{2}(\OR)}\|u^{r}\|_{L^{\infty}(\OR)}
			\end{split}
		\end{equation}
		and
		\begin{equation}\label{eqA70}
			\begin{split}
				&\left|\int_{0}^{1}\int_{0}^{2\pi}\int_{R-1}^{R}\left[r^{-1}\left(u^{\theta}\partial_{\theta}u^{r}-u^{\theta}u^{\theta}\right)\right]\Psi_{R}^{r}\, drd\theta dz \right|\\	
				\leq& C\| u^{\theta}\|_{L^{\infty}(\OR)}\cdot R^{-\frac{1}{2}}\|\nabla\Bu\|_{L^{2}(\OR)}\cdot R^{\frac{1}{2}}\|u^{r}\|_{L^{2}(\OR)}\\
				\leq& CR^{\frac{1}{2}}\| \nabla\Bu\|_{L^{2}(\OR)}\|u^{r}\|_{L^{\infty}(\OR)}.
			\end{split}
		\end{equation}
		Collecting the estimates \eqref{eqA67}-\eqref{eqA70}, one obtains
		\begin{equation}\label{eqA71}
			\left|\int_{0}^{1}\int_{0}^{2\pi}
			\int_{R-1}^{R}\partial_{r}P\Psi^{r}_{R}\,dr d\theta dz\right|\leq CR^{\frac{1}{2}}\| \nabla\Bu \|_{L^{2}(\OR)}\|u^{r}\|_{L^{\infty}(\OR)}+CR^{-1}\|u^{r}\|_{L^{\infty}(\OR)}.
		\end{equation}
		Similarly,	 it can be proved that
		\begin{equation}\label{eqA77}
			\left|\int_{0}^{1}\int_{0}^{2\pi}
			\int_{R-1}^{R}\partial_{z}P\Psi^{\theta}_{R}\, drd\theta dz\right|
			\leq CR^{\frac{1}{2}}\|\nabla\Bu\|_{L^{2}(\OR)}\|u^{r}\|_{L^{\infty}(\OR)}.
		\end{equation}
		Next we estimate the right hand of \eqref{eqA63}. It follows from \eqref{divfes1}-\eqref{divfes3}  and \eqref{eqA60} that one has
		\begin{equation}\label{eqA72}
			\begin{split}
				&\left| \int_{0}^{1}\int_{0}^{2\pi}\int_{R-1}^{R}\left[r\left(\partial_{r}u^{\theta}\partial_{r}\Psi_{R}^{\theta}+\partial_{z}u^{\theta}\partial_{z}\Psi_{R}^{\theta}\right)\right]\,drd\theta dz\right|\\
				\leq& CR\|(\partial_{r},\partial_{z})u^{\theta}\|_{L^{2}(\DR)}\|(\partial_{r},\partial_{z})\Psi^{\theta}_{R}\|_{L^{2}(\DR)}\\
				\leq&  CR^{\frac{1}{2}} \|\nabla
				\Bu\|_{L^{2}(\OR)}\cdot R^{\frac{1}{2}} \| u^{r}\|_{L^{2}(\OR)}\\
				\leq &
				CR^{\frac{3}{2}} \|\nabla
				\Bu\|_{L^{2}(\OR)} \| u^{r}\|_{L^{\infty}(\OR)},
			\end{split}
		\end{equation}
		
		\begin{equation}\label{eqA73}
			\begin{split}
				&\left|\int_{0}^{1}\int_{0}^{2\pi}\int_{R-1}^{R}\left[r^{-1}\left(\partial_{\theta}u^{\theta}+2u^{r}\right)\partial_{\theta}\Psi_{R}^{\theta}\right]\, drd\theta dz \right| \\
				\leq&
				C\left(\|r^{-1}(\partial_{\theta}u^{\theta}+u^{r})\|_{L^{2}(\DR)}
				+R^{-1}\|u^{r}\|_{L^{2}(\DR)}\right)\|\partial_{\theta}\Psi_{R}^{\theta}\|_{L^{2}(\DR)}\\
				\leq&
				C \left(R^{-\frac{1}{2}} \|\nabla\Bu\|_{L^{2}(\OR)}+R^{-1}\|u^{r}\|_{L^{\infty}(\OR)}\right)\cdot R^{\frac{1}{2}}\|u^{r}\|_{L^{2}(\OR)}\\
				\leq&
				C R^{\frac{1}{2}} \|\nabla\Bu\|_{L^{2}(\OR)}\|u^{r}\|_{L^{\infty}(\OR)}+C\|u^{r}\|_{L^{\infty}(\OR)}
			\end{split}
		\end{equation}
		and
		\begin{equation}\label{reeqB73}
			\begin{split}
				&\left|\int_{0}^{1}\int_{0}^{2\pi}\int_{R-1}^{R}r^{-1}u^{\theta}\Psi_{R}^{\theta}\, drd\theta dz \right|	\\
				\leq&
				CR^{-1}\|u^{\theta}\|_{L^{2}(\DR)}
				\|\Psi_{R}^{\theta}\|_{L^{2}(\DR)}\\
				\leq &
				C R^{-1} \|u^{\theta}\|_{L^{\infty}(\OR)}
				\cdot   R^{\frac{1}{2}}\|u^{r}\|_{L^{2}(\OR)}\\
				\leq & C\|u^{r}\|_{L^{\infty}(\OR)}.
			\end{split}
		\end{equation}
		Furthermore, we have
		\begin{equation}\label{eqA74}
			\begin{split}
				&\left| \int_{0}^{1}\int_{0}^{2\pi}\int_{R-1}^{R}(u^{r}u^{\theta}+u^{\theta}\partial_{\theta}u^{\theta})\Psi_{R}^{\theta}\, drd\theta dz\right|\\
				\leq& CR^{\frac{1}{2}}\|r^{-1}(u^{r}+\partial_{\theta}u^{\theta})\|_{L^{2}(\OR)}\|u^{\theta}\|_{L^{\infty}(\DR)}\|\Psi^{\theta}_{R}\|_{L^{2}(\DR)}\\
				\leq& CR^{\frac{1}{2}}\|\nabla\Bu\|_{L^{2}(\OR)}\cdot R^{\frac{1}{2}}\|u^{r}\|_{L^{2}(\OR)}\\
				\leq&  CR^{\frac{3}{2}} \|\nabla
				\Bu\|_{L^{2}(\OR)}  \| u^{r}\|_{L^{\infty}(\OR)}
			\end{split}
		\end{equation}
		and
		\begin{equation}\label{eqA75}
			\begin{split}
				&\left|\int_{0}^{1}\int_{0}^{2\pi}\int_{R-1}^{R}\left[r\left(u^{r}\partial_{r}+u^{z}\partial_{z}\right)u^{\theta}\right]\Psi_{R}^{\theta}\, drd\theta dz \right|	
				\\
				\leq& CR\| (u^{r}, u^{z})\|_{L^{\infty}(\DR)}
				\| (\partial_{r},  \partial_{z})u^{\theta}\|_{L^{2}(\DR)}\|\Psi_{R}^{\theta}\|_{L^{2}(\DR)}	\\
				\leq & CR^{\frac{1}{2}}
				\|\nabla\Bu\|_{L^{2}(\OR)}\cdot R^{\frac{1}{2}}\|u^{r}\|_{L^{2}(\OR)}\\
				\leq & CR^{\frac{3}{2}}\| \nabla\Bu\|_{L^{2}(\OR)}\|u^{r}\|_{L^{\infty}(\OR)}.
			\end{split}
		\end{equation}
		Collecting the estimates \eqref{eqA72}-\eqref{eqA75}, one obtains
		\begin{equation}\label{eqA76}
			\left|\int_{0}^{1}\int_{0}^{2\pi}\int_{R-1}^{R}\partial_{\theta}P\Psi^{\theta}_{R}\, drd\theta dz\right|
			\leq CR^{\frac{3}{2}}\| \nabla\Bu\|_{L^{2}(\OR)}\|u^{r}\|_{L^{\infty}(\OR)}+C\|u^{r}\|_{L^{\infty}(\OR)}.
		\end{equation}
		Combining the estimates \eqref{eqA65}-\eqref{eqA66}, \eqref{eqA71}-\eqref{eqA77} and \eqref{eqA76}, it can be shown that
		\begin{equation}\label{eqA78}
			Y(R)\leq CR^{\frac{1}{2}}\|\nabla\Bu\|_{L^{2}(\OR)}+CR\|u^{r}\|_{L^{\infty}(\OR)}+CR^{\frac{3}{2}}\|\nabla\Bu\|_{L^{2}(\OR)}\|u^{r}\|_{L^{\infty}(\OR)},
		\end{equation}
		where $Y(R)$ is defined in \eqref{gDinsgro}.
		Since $ru^{r}$ is bounded,  one obtains that
		\begin{equation}\label{eqA80}
			Y(R)\leq C_{1}R^{\frac{1}{2}}[Y^{\prime}(R)]^{\frac{1}{2}}+C_{2}R\|u^{r}\|_{L^{\infty}(\OR)}.
		\end{equation}
		Assume that $\nabla\Bu$ is not identically equal to zero, there exists a constant $R_{0}$ large enough, such that $Y(R_{0})> 0$. Since $ru^{r}$ converges to zero uniformly in $\theta$, $z$, there exists some $R_{1}>R_{0}$ such that $Y(R_{0})\geq 2 C_{2}R\|u^{r}\|_{L^{\infty}(\OR)}$ for every $R\geq R_{1}$.
		This implies 
		\[
		Y(R)\leq 2C_{1}R^{\frac{1}{2}}[Y^{\prime}(R)]^{\frac{1}{2}}, \quad  R\geq R_{1},
		\]
		which leads to a contradiction as in the proof  for Lemma \ref{Le:32}. Hence $\Bu$ equals to zero, thanks to the Navier boundary conditions \eqref{eqA10} and  $ru^{r}$ is bounded. This completes the proof for Case (b) of Theorem \ref{th:02}.
	\end{proof}
	\subsection{General 3D flows  when $\|\Bu\|_{L^{\infty}(\Omega)}$ is not big}\label{Sec54}	
	In this subsection,  if the velocity $\Bu$ is not too big, we prove that the  flow in a slab with Navier boundary conditions  must be a Poiseuille type flow.
	\begin{proof}[Proof for case (iii) of Theorem \ref{th:02}]
		Assume that $\Bu$ is a smooth solution to the Navier-Stokes system \eqref{eqsteadyns} in a slab $\Omega=\mathbb{R}^{2}\times (0,1)$ with Navier boundary conditions \eqref{eqNavierbou} where $\alpha>0$.
		Taking the $x_{i}$-derivative ($i=1, 2$) of the momentum equation in \eqref{eqsteadyns}, one has
		\begin{equation}\label{sec6derivxeq}
			-\Delta \partial_{x_i}\Bu + (\partial_{x_i}\Bu \cdot \nabla )\Bu +(\Bu \cdot \nabla )\partial_{x_i}\Bu+ \nabla \partial_{x_i}P  = 0.
		\end{equation}
		Multiplying the equation  \eqref{sec6derivxeq} by $\partial_{x_i}\Bu\varphi_{R}(r)$ and integrating over $\Omega$,  one obtains
		\begin{equation}\label{sec6eqmul}
			\begin{split}
				&\int_{\Omega}-\Delta \partial_{x_i}\Bu\cdot \partial_{x_i}\Bu\varphi_{R} \,d\Bx +\int_{\Omega}( \partial_{x_i}\Bu\cdot\nabla)\Bu\cdot
				\partial_{x_i}\Bu \varphi_{R} \,d\Bx\\
				=&\frac{1}{2}\int_{\Omega}\left(\Bu\cdot \nabla \varphi_{R}\right)|\partial_{x_i} \Bu|^{2}\,d\Bx+\int_{\Omega}	\partial_{x_i} P \partial_{x_i}\Bu \cdot \nabla\varphi_{R} \,d\Bx.
			\end{split}
		\end{equation}	
		If $\Bu$ in \eqref{sec511eqANIii}-\eqref{sec511flatsid2} is   replaced  by $\partial_{x_i}\Bu$, then one has
		\begin{equation}\label{eqse6Dibem}
			\begin{split}
				&\int_{\Omega}-\Delta \partial_{x_i}\Bu\cdot \partial_{x_i}\Bu\varphi_{R} \,d\Bx\\
				=&
				\int_{\Omega}\varphi_{R}|\nabla\partial_{x_i}\Bu|^{2} \,d\Bx+2\alpha\int_{\partial\Omega} \varphi_{R}\left|\partial_{x_i}\Bu \right|^2 \, dS+\int_{\Omega}\nabla\varphi_{R}\cdot \nabla \partial_{x_i}\Bu\cdot \partial_{x_i}\Bu\,d\Bx.
			\end{split}
		\end{equation}
		Consequently, combining \eqref{sec6eqmul} and \eqref{eqse6Dibem} give
		\begin{equation}\label{2sec6eqmul223}
			\begin{split}
				&\int_{\Omega}\varphi_{R}|\nabla\partial_{x_i}\Bu|^{2} \,d\Bx+2\alpha\int_{\partial\Omega} \varphi_{R}\left|\partial_{x_i}\Bu \right|^2 \, dS+\int_{\Omega}( \partial_{x_i}\Bu\cdot\nabla)\Bu\cdot
				\partial_{x_i}\Bu \varphi_{R} \,d\Bx\\
				=&\frac{1}{2}\int_{\Omega}\left(\Bu\cdot \nabla \varphi_{R}\right)|\partial_{x_i} \Bu|^{2}\,d\Bx+\int_{\Omega}	\partial_{x_i} P \partial_{x_i}\Bu \cdot \nabla\varphi_{R} \,d\Bx-\int_{\Omega}\nabla\varphi_{R}\cdot \nabla \partial_{x_i}\Bu\cdot \partial_{x_i}\Bu\,d\Bx.
			\end{split}
		\end{equation}	
		For the third term on the left hand of \eqref{2sec6eqmul223},  integrating by parts yields
		\begin{equation}
			\int_{\Omega}(\partial_{x_i} \Bu\cdot \nabla)\Bu\cdot \partial_{x_i} \Bu\varphi_{R}\,d\Bx=-\int_{\Omega}(\partial_{x_i} \Bu\cdot \nabla\partial_{x_i})\Bu\cdot  \Bu\varphi_{R}\,d\Bx-\int_{\Omega}(\partial_{x_i} \Bu\cdot \nabla\varphi_{R})(\Bu\cdot \partial_{x_i} \Bu)\,d\Bx.
		\end{equation}
		In fact, one obtains
		\begin{equation}\label{eqedpruvarf}
			\begin{split}
				&\left|\int_{\Omega}(\partial_{x_i} \Bu\cdot \nabla\partial_{x_i})\Bu\cdot  \Bu\varphi_{R}\,d\Bx\right|\\
				\leq& \|\nabla \partial_{x_i}\Bu \sqrt{\varphi_{R}}\|_{L^{2}(\Omega)}\|\partial_{x_i}\Bu \sqrt{\varphi_{R}}\|_{L^{2}(\Omega)}
				\|\Bu\|_{L^{\infty}(\Omega)}\\
				\leq& \|\nabla \partial_{x_i}\Bu \sqrt{\varphi_{R}}\|_{L^{2}(\Omega)}\cdot 2
				\left(\|\partial_{x_3}\partial_{x_i}\Bu \sqrt{\varphi_{R}}\|_{L^{2}(\Omega)}+\|
				\partial_{x_i}\Bu \sqrt{\varphi_{R}}\|_{L^{2}(\partial\Omega)}\right)
				\|\Bu\|_{L^{\infty}(\Omega)}\\
				\leq& 	2\|\Bu\|_{L^{\infty}(\Omega)} \left(\|\nabla\partial_{x_i}\Bu \sqrt{\varphi_{R}}\|_{L^{2}(\Omega)}+\|
				\partial_{x_i}\Bu \sqrt{\varphi_{R}}\|_{L^{2}(\partial\Omega)}\right)^{2}\\
				\leq& 	4\|\Bu\|_{L^{\infty}(\Omega)} \left(\|\nabla\partial_{x_i}\Bu \sqrt{\varphi_{R}}\|^{2}_{L^{2}(\Omega)}+\|
				\partial_{x_i}\Bu \sqrt{\varphi_{R}}\|^{2}_{L^{2}(\partial\Omega)}\right),
			\end{split}
		\end{equation}
		where the following Poincar\'e type inequality
		\[
		\|\partial_{x_i}\Bu \sqrt{\varphi_{R}}\|_{L^{2}(\Omega)}\leq 2
		\left(\|\partial_{x_3}\partial_{x_i}\Bu \sqrt{\varphi_{R}}\|_{L^{2}(\Omega)}+
		\| \partial_{x_i}\Bu \sqrt{\varphi_{R}}\|_{L^{2}(\partial\Omega)}\right)
		\]
		has been used to get the second inequality in \eqref{eqedpruvarf}.
		Furthermore,  one  has  the Poincar\'e type inequality
		\begin{equation}\label{se6POINTY2}
			\|\partial_{x_{i}}\Bu\|_{L^{2}(\OR)}\leq
			C\left(\|\partial_{x_3}\partial_{x_{i}}\Bu\|_{L^{2}(\OR)}+\|\partial_{x_{i}}\Bu\|_{L^{2}(\partial\OR\cap \partial \Omega)}\right),
		\end{equation}
		where $C$ is a universal constant.
		By the Poincar\'e type inequality \eqref{se6POINTY2} and Lemma \ref{Le:NAvbouns}, it holds that
		\begin{equation}
			\begin{split}
				\left|\int_{\Omega}(\partial_{x_i} \Bu\cdot \nabla\varphi_{R})(\Bu\cdot \partial_{x_i} \Bu)\,d\Bx\right|
				\leq& \|\partial_{x_i}\Bu\|^{2}_{L^{2}(\OR)}\|\Bu\|_{L^{\infty}(\OR)}\\
				\leq& CR^{\frac{1}{2}}\left(\|\nabla\partial_{x_{i}}\Bu\|_{L^{2}(\OR)}+\|\partial_{x_{i}}\Bu\|_{L^{2}(\partial\OR\cap \partial \Omega)}\right),
			\end{split}
		\end{equation}
		\begin{equation}
			\begin{split}
				\left|\int_{\Omega}(\Bu\cdot \nabla\varphi_{R})|\partial_{x_i} \Bu|^{2}\,d\Bx\right|
				\leq& \|\Bu\|_{L^{\infty}(\OR)} \|\partial_{x_i}\Bu\|^{2}_{L^{2}(\OR)}\\
				\leq& CR^{\frac{1}{2}}\left(\|\nabla\partial_{x_{i}}\Bu\|_{L^{2}(\OR)}+\|\partial_{x_{i}}\Bu\|_{L^{2}(\partial\OR\cap \partial \Omega)}\right)
			\end{split}
		\end{equation}
		and
		\begin{equation}
			\left|\int_{\Omega}\nabla\varphi_{R}\cdot \nabla \partial_{x_i}\Bu\cdot \partial_{x_i}\Bu\,d\Bx\right|\leq \|\nabla \partial_{x_i}\Bu\|_{L^{2}(\OR)} \|\partial_{x_i}\Bu\|_{L^{2}(\OR)}\leq CR^{\frac{1}{2}}\|\nabla\partial_{x_i}\Bu\|_{L^{2}(\OR)}.
		\end{equation}
		Using the momentum equation in \eqref{eqsteadyns} gives
		\begin{equation}
			\begin{split}
				&\int_{\Omega}	\partial_{x_i} P \partial_{x_i}\Bu \cdot \nabla\varphi_{R} \,d\Bx=\int_{\Omega}\Delta u^{i} \partial_{x_i}\Bu \cdot \nabla\varphi_{R} \,d\Bx-\int_{\Omega}	\left(\Bu\cdot \nabla u^{i}\right) \partial_{x_i}\Bu \cdot \nabla\varphi_{R} \,d\Bx\\
				=&\int_{\Omega}	(\partial^{2}_{x_1}+\partial^{2}_{x_2}) u^{i} \partial_{x_i}\Bu \cdot \nabla\varphi_{R} \,d\Bx-\int_{\Omega}	\partial_{x_3} u^{i} \partial_{x_3} \partial_{x_i}\Bu \cdot \nabla\varphi_{R} \,d\Bx-\int_{\Omega}	(\Bu\cdot \nabla u^{i}) \partial_{x_i}\Bu \cdot \nabla\varphi_{R} \,d\Bx.
			\end{split}
		\end{equation}
		Consequently, one derives
		\begin{equation}
			\begin{split}
				&\left|\int_{\Omega}	\partial_{x_i} P \partial_{x_i}\Bu \cdot \nabla\varphi_{R} \,d\Bx\right|\\
				\leq& CR^{\frac{1}{2}}\left[(\|\nabla\partial_{x_1}\Bu\|_{L^{2}(\OR)}+\|\partial_{x_1}\Bu\|_{L^{2}(\partial\OR\cap \partial \Omega)})+(\|\nabla\partial_{x_2}\Bu\|_{L^{2}(\OR)}+\|\partial_{x_2}\Bu\|_{L^{2}(
					\partial\OR\cap \partial \Omega)})\right].
			\end{split}
		\end{equation}
		Due to $\|\Bu\|_{L^{\infty}(\Omega)}<	\min\left\{\dfrac{\alpha}{2}, \, \dfrac{1}{4}\right\}$,  it can be shown that
		\begin{equation}\label{sec6inequ156}
			\begin{split}
				&\int_{\Omega}\varphi_{R}|\nabla\partial_{x_i}\Bu|^{2} \,d\Bx+2\alpha\int_{\partial\Omega} \varphi_{R}\left|\partial_{x_i}\Bu \right|^2 \, dS\\
				\leq& CR^{\frac{1}{2}}\left[(\|\nabla\partial_{x_1}\Bu\|_{L^{2}(\OR)}+\|\partial_{x_1}\Bu\|_{L^{2}(\partial\OR\cap \partial \Omega)})+(\|\nabla\partial_{x_2}\Bu\|_{L^{2}(\OR)}+\|\partial_{x_2}\Bu\|_{L^{2}(\partial\OR\cap \partial \Omega)})\right].
			\end{split}
		\end{equation}
		
		Define
		\[
		\begin{split}
			X(R)=&\int_{\Omega}
			\left(|\nabla\partial_{x_{1}}\Bu|^{2}+|\nabla\partial_{x_{2}}\Bu|^{2}\right)\varphi_{R}\left(\sqrt{x_{1}^{2}+x_{2}^{2}}\right)
			\, d\Bx \\
			&+2\alpha\int_{\partial\Omega}
			\left(|\partial_{x_{1}}\Bu|^{2}+|\partial_{x_{2}}\Bu|^{2}\right)\varphi_{R}\left(\sqrt{x_{1}^{2}+x_{2}^{2}}\right)
			\, dS.
		\end{split}
		\]
		Hence the estimate \eqref{sec6inequ156} implies
		\begin{equation}
			X(R)\leq CR^{\frac{1}{2}}\left[X^{\prime}(R)\right]^{\frac{1}{2}}.
		\end{equation}
		Similarly, it follows from the last part  of the proof for Lemma \ref{Le:32}, one has $\nabla\partial_{x_1}\Bu=\nabla\partial_{x_2}\Bu\equiv 0$ in $\Omega$ and $\partial_{x_1}\Bu=\partial_{x_2}\Bu=0$ on the boundary $\partial\Omega$. Therefore, $\partial_{x_1}\Bu=\partial_{x_2}\Bu\equiv0$. Furthermore, it follows from the divergence free property of $\Bu$ that $\partial_{x_3}u^{3}=0$. Combining with the Navier boundary conditions \eqref{bounNaviersl1} yields that
		\[
		u^{1}=u^{1}(x_{3}), \quad u^{2}=u^{2}(x_{3}) \quad
		\text{and} \quad
		u^{3}\equiv 0.
		\]
		Hence the Navier-Stokes system \eqref{eqsteadyns} reduces to
		\[
		\partial^{2}_{x_3}u^{1}+\partial_{x_1}P=
		\partial^{2}_{x_3}u^{2}+\partial_{x_2}P=
		\partial_{x_3} P=0.
		\]
		This, together with the Navier boundary conditions \eqref{bounNaviersl1}, implies that
		\[
		u^{1}=C_{3}\left(-2\alpha x^{2}_{3}+2\alpha x_{3}+1\right) \,\, \text{and} \,\,
		u^{2}=C_{4}\left(-2\alpha x^{2}_{3}+2\alpha x_{3}+1\right), \quad \text{for some} \,\,\, C_{3}, \, C_{4}\in \mathbb{R}.
		\]
		The proof for case (iii) of Theorem \ref{th:02} is completed.
	\end{proof}	
	\section{Axisymmetric solutions with sublinear growth}\label{Sec6}
	In this section, we give a proof for Theorem \ref{th:03}.
	We first prove a lemma which shows that for $\alpha>0$, the axisymmetric flows in a slab with Navier boundary conditions \eqref{eqNavierbou}
	must be trivial even when the integral $\displaystyle\int_{\Omega_{R}}|\nabla\Bu|^{2}
	\,d\Bx+2\alpha\displaystyle\int_{\partial\Omega_{R}\,\cap\,\partial\Omega}|\Bu|^{2}\, dS$ has the cubic  growth.
	
	\begin{lemma}\label{Le:51h}
		For $\alpha>0$,	let $\Bu$ be a  smooth   solution to the  Navier-Stokes system \eqref{eqsteadyns} in  the slab $\Omega=\mathbb{R}^{2}\times (0,1)$  with Navier boundary conditions \eqref{eqNavierbou}.	   Then $\Bu\equiv 0$ provided that
		$\Bu$ is axisymmetric and
		\begin{equation} \label{growth}
			\varliminf_{R\rightarrow + \infty}  R^{-4}  Z(R) = 0,
		\end{equation}
		where
		\begin{equation}\label{defER}
			\begin{split}
				Z(R)=&\displaystyle \int_0^1 \iint_{\{x_1^2+x_2^2 < R^2\}} |\nabla \Bu(x_1,x_2,x_3)|^2 \, dx_1dx_2dx_3\\
				&+2\alpha\iint_{\{x_1^2+x_2^2 < R^2\}} \left(| \Bu(x_1,x_2,0)|^2 +| \Bu(x_1,x_2,1)|^2 \right)\, dx_1dx_2.
			\end{split}
		\end{equation}
		
	\end{lemma}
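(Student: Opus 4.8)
The overall strategy is to prove a Saint-Venant type differential inequality for a boundary-weighted Dirichlet energy and then apply Lemma \ref{le:differineq}(b). I would set
\[
Y(R)=\int_{\Omega}|\nabla\Bu|^{2}\varphi_{R}\,d\Bx+2\alpha\int_{\partial\Omega}|\Bu|^{2}\varphi_{R}\,dS,
\]
which is nondecreasing, satisfies $Y'(R)=\int_{\OR}|\nabla\Bu|^{2}\,d\Bx+2\alpha\int_{\partial\OR\cap\partial\Omega}|\Bu|^{2}\,dS$, and obeys $Y(R)\le Z(R)$ because $\varphi_{R}\le\mathbf{1}_{\{r<R\}}$. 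The goal is the estimate
\[
Y(R)\le C_{3}Y'(R)+C_{4}R^{-\frac12}\left[Y'(R)\right]^{\frac32}.
\]
Granting this, Lemma \ref{le:differineq}(b) gives $\varliminf_{R\to+\infty}R^{-4}Y(R)>0$ whenever $\Bu$ is nontrivial; combined with $Y(R)\le Z(R)$ this contradicts the hypothesis \eqref{growth}. Hence $Y\equiv0$, so $\nabla\Bu\equiv0$ and, since $\alpha>0$, the weighted boundary term forces $\Bu|_{\partial\Omega}=0$, whence $\Bu\equiv0$.

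To set up the identity I would multiply the momentum equation by $\varphi_{R}\Bu$ and integrate exactly as in the proof of Lemma \ref{Le:52}: writing $-\Delta\Bu=-2\,\div\D(\Bu)$, the Navier boundary conditions generate the term $2\alpha\int_{\partial\Omega}\varphi_{R}|\Bu|^{2}\,dS$, while the flat-boundary identity \eqref{sec511flatsid2} annihilates the remaining boundary contribution, leaving
\[
Y(R)=\int_{\Omega}\tfrac12|\Bu|^{2}\Bu\cdot\nabla\varphi_{R}\,d\Bx+\int_{\Omega}P\Bu\cdot\nabla\varphi_{R}\,d\Bx-\int_{\Omega}\nabla\varphi_{R}\cdot\nabla\Bu\cdot\Bu\,d\Bx.
\]
Axisymmetry and the divergence-free condition give $\partial_{r}\int_{0}^{1}ru^{r}\,dz=0$, hence $\int_{0}^{1}\int_{R-1}^{R}ru^{r}\,dr\,dz=0$, so Lemma \ref{Bogovskii}(1) furnishes $\Ps_{R}(r,z)\in H_{0}^{1}(D_{R};\mathbb{R}^{2})$ with $\partial_{r}\Psi_{R}^{r}+\partial_{z}\Psi_{R}^{z}=ru^{r}$ and $\|\tilde{\nabla}\Ps_{R}\|_{L^{2}(D_{R})}\le C\|ru^{r}\|_{L^{2}(D_{R})}\le CR\|u^{r}\|_{L^{2}(D_{R})}$. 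As in \eqref{sec522Bpreeqes}--\eqref{sec512reeqArP1dd}, this rewrites the pressure term as $2\pi\int_{D_{R}}(\partial_{r}P\,\Psi_{R}^{r}+\partial_{z}P\,\Psi_{R}^{z})\,dr\,dz$, into which I substitute the axisymmetric pressure-gradient equations \eqref{eqPGe} and integrate by parts the viscous part (no boundary terms arise, since $\Ps_{R}$ vanishes on $\partial D_{R}$).

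The essential point, which replaces the $L^{\infty}$ bounds used in the bounded-velocity arguments of Section \ref{Sec5}, is to control every term by $Y'(R)$ with no a priori bound on $\Bu$. Two ingredients accomplish this. First, since $u^{z}$ vanishes on $\partial\Omega$ and the traces of $u^{r},u^{\theta}$ are charged to the boundary part of $Y'$, a one-dimensional Poincaré inequality in $z$ yields $\|\Bu\|_{L^{2}(\OR)}^{2}\le C\big(\|\partial_{z}\Bu\|_{L^{2}(\OR)}^{2}+\|\Bu\|_{L^{2}(\partial\OR\cap\partial\Omega)}^{2}\big)\le CY'(R)$, which bounds all the linear and viscous contributions (including the Bogovskii viscous terms, via the estimate for $\|\tilde{\nabla}\Ps_{R}\|_{L^{2}(D_{R})}$) by $CY'(R)$. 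Second, for the cubic terms I would reduce to the cross-sectional rectangle $D_{R}$ and use that, for axisymmetric $f$, the three-dimensional norm on $\OR$ and the two-dimensional norm on $D_{R}$ are related by $\|f\|_{L^{p}(\OR)}^{p}\sim R\|f\|_{L^{p}(D_{R})}^{p}$, together with the two-dimensional Gagliardo-Nirenberg inequality $\|f\|_{L^{3}(D_{R})}^{3}\le C\|f\|_{L^{2}(D_{R})}^{2}\|f\|_{H^{1}(D_{R})}$. Since $\|f\|_{L^{2}(D_{R})}^{2}\sim R^{-1}\|f\|_{L^{2}(\OR)}^{2}$ and likewise for the $H^{1}$ seminorm, this gives $\int_{\OR}|\Bu|^{3}\,d\Bx\sim R\|\Bu\|_{L^{3}(D_{R})}^{3}\le CR^{-\frac12}[Y'(R)]^{\frac32}$, and the same $L^{4}$-interpolation on $D_{R}$ applied to the convective Bogovskii terms, such as $\int_{D_{R}}\big(u^{r}\partial_{r}+u^{z}\partial_{z}\big)u^{r}\,\Psi_{R}^{r}\,dr\,dz$, produces the identical $R^{-1/2}[Y'(R)]^{3/2}$ scaling. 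Collecting the two types of bounds yields the desired differential inequality.

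The main obstacle is precisely this bookkeeping of powers of $R$ in the cubic terms: the whole argument hinges on recovering the favorable factor $R^{-1/2}$, since a naive bound would only give $Y(R)\le C_{3}Y'(R)+C_{4}[Y'(R)]^{3/2}$, i.e.\ the situation of Lemma \ref{le:differineq}(a) and mere cubic growth $\varliminf R^{-3}Y>0$, which is too weak to contradict $\varliminf R^{-4}Z=0$. It is the axisymmetric reduction to $D_{R}$, where the circumference factor $R$ relating two- and three-dimensional norms is extracted, that supplies the extra $R^{-1/2}$; combining this with the Navier-boundary-adapted Poincaré inequality is the only place where the proof genuinely departs from the finite-Dirichlet-integral computation of Lemma \ref{Le:52}.
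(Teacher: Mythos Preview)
Your proposal is correct and follows essentially the same route as the paper's proof. The paper also reduces to the cross-section $D_{R}$, invokes the two-dimensional Gagliardo--Nirenberg inequality (in the $L^{4}$ form \eqref{eqL4uD}) to extract the favorable $R^{-1/2}$ factor in the cubic terms, and arrives at the same differential inequality $Z(R)\le CZ'(R)+CR^{-1/2}[Z'(R)]^{3/2}$ before applying Lemma~\ref{le:differineq}(b); the only cosmetic difference is that the paper overloads the symbol $Z(R)$ for the smooth-cutoff quantity you call $Y(R)$, whereas you keep the two distinct and use $Y(R)\le Z(R)$ explicitly.
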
	
	
	\begin{proof}[Proof of Lemma \ref{Le:51h}]
		The proof contains two steps.
		
		\emph{Step 1.} \emph {Set up.}	Assume that $\Bu$ is a smooth axisymmetric solution to the Navier-Stokes system \eqref{eqsteadyns} in  $\Omega=\mathbb{R}^{2}\times (0,1)$ with Navier boundary conditions \eqref{eqNavierbou} where $\alpha>0$.
		As in the proof  of Lemma \ref{Le:52}, the inequality \eqref{ineqinte} still holds.
		
		Note that
		\begin{equation}\label{eqA29}
			\int_{\Omega}P\Bu \cdot \nabla \varphi_{R} \,d\Bx  =-2\pi \int_{0}^{1}
			\int_{R-1}^{R}Pu^{r}r \, drdz.
		\end{equation}
		The divergence free condition for the axisymmetric solution  is
		\[
		\partial_{r}(ru^{r})+\partial_{z}(ru^{z})=0.
		\]
		Hence for every fixed $r\geq0$, one has
		\begin{equation}\label{eqA30}
			\partial_{r}\int_{0}^{1}r u^{r}\, dz=-\int_{0}^{1}\partial_{z}(ru^{z})\, dz=0,
		\end{equation}
		which implies that
		\begin{equation}\label{eqA31}
			\int_{0}^{1}ru^{r} \, dz
			= \int_0^1 u^r \, dz
			= \int_0^1 \int_{R-1}^R ru^r \, dr dz  =0.
		\end{equation}
		The Poincar\'{e} inequality
		\begin{equation}\label{eqA34}
			\|u^{r}\|_{L^{2}(\OR)}\leq	C\|\partial_{z}u^{r}\|_{L^{2}(\OR)}
		\end{equation}
		holds.
		By virtue of \eqref{eqA31} and Lemma \ref{Bogovskii}, there exists a vector valued function $\Ps_{R}(r,z)\in H^{1}_{0}(D_{R}; \mathbb{R}^{2})$ satisfying
		\begin{equation}\label{eqA35}
			\partial_{r}\Psi_{R}^{r}+\partial_{z}\Psi_{R}^{z}=ru^{r}\ \ \ \ \mbox{in}\ D_R
		\end{equation}
		and
		\begin{equation}\label{eqA36}
			\|\partial_{r}\Ps_{R}\|_{L^{2}(D_{R})}+	\|\partial_{z}\Ps_{R}\|_{L^{2}(D_{R})}
			\leq C\|ru^{r}\|_{L^{2}(D_{R})}
			\leq   CR^{\frac{1}{2}}\|u^{r}\|_{L^{2}(\OR)}.
		\end{equation}
		Therefore, combining \eqref{eqA29} and \eqref{eqA35} one derives
		\begin{equation}\label{eqA37}
			\begin{split}
				\int_{\Omega}P\Bu\cdot \nabla \varphi_{R}\,d\Bx
				=& -2\pi \int_{0}^{1}
				\int_{R-1}^{R}P(\partial_{r}\Psi_{R}^{r}+\partial_{z}\Psi_{R}^{z})\,drdz \\
				=&\ 2 \pi \int_{0}^{1}
				\int_{R-1}^{R}(\partial_{r}P\Psi_{R}^{r}+\partial_{z}P\Psi_{R}^{z})\, drdz.
			\end{split}
		\end{equation}
		Since $\Bu$ is an  axisymmetric  solution of  the   Navier-Stokes system, the gradient of the pressure $(\partial_{r}P, \,  \partial_{z}P)$  satisfies
		\begin{equation}\label{eqA38}
			\left\{
			\begin{aligned}
				&(u^{r}\partial_{r}+u^{z}\partial_{z})u^{r}-\dfrac{(u^{\theta})^2}{r}+\partial_{r}P
				=\left(\partial_{r}^{2}+\dfrac{1}{r}\partial_{r}+\partial_{z}^{2}-\dfrac{1}{r^2}\right)u^{r},\\
				&(u^{r}\partial_{r}+u^{z}\partial_{z})u^{z}+\partial_{z}P
				=\left(\partial_{r}^{2}+\dfrac{1}{r}\partial_{r}+\partial_{z}^{2}\right)u^{z}.
			\end{aligned}
			\right.
		\end{equation}
		According to  \eqref{eqA38},  by integration by parts one obtains 
		\begin{equation}\label{eqA39}
			\begin{split}
				& \int_{0}^{1}\int_{R-1}^{R}\partial_{r}P\Psi^{r}_{R}\, drdz\\
				=& \int_{0}^{1}\int_{R-1}^{R}\left[\left(\partial_{r}^{2}+\dfrac{1}{r}\partial_{r}+\partial_{z}^{2}-\dfrac{1}{r^2}\right)u^{r}\right]\Psi^{r}_{R} \, drdz \\
				&-\int_{0}^{1}\int_{R-1}^{R}\left[(u^{r}\partial_{r}+u^{z}\partial_{z})u^{r}-\dfrac{(u^{\theta})^2}{r}\right]\Psi^{r}_{R}\, drdz	\\
				=&-\int_{0}^{1}\int_{R-1}^{R}(\partial_{r}u^{r}\partial_{r}\Psi_{R}^{r}+\partial_{z}u^{r}\partial_{z}\Psi_{R}^{r}) \, drdz+\int_{0}^{1}\int_{R-1}^{R}\left[\left(\dfrac{1}{r}\partial_{r}-\dfrac{1}{r^2}\right)u^{r}\right]\Psi_{R}^{r}\, drdz\\
				&-\int_{0}^{1}\int_{R-1}^{R}\left[(u^{r}\partial_{r}+u^{z}\partial_{z})u^{r}-\dfrac{(u^{\theta})^2}{r}\right]\Psi^{r}_{R} \, drdz
			\end{split}
		\end{equation}
		and
		\begin{equation}\label{eqA40}
			\begin{split}
				& \int_{0}^{1}\int_{R-1}^{R}\partial_{z}P\Psi^{z}_{R}\, drdz\\
				=&	-\int_{0}^{1}\int_{R-1}^{R}(\partial_{r}u^{z}\partial_{r}\Psi_{R}^{z}+\partial_{z}u^{z}\partial_{z}\Psi_{R}^{z}) \, drdz+\int_{0}^{1}\int_{R-1}^{R}\left(\dfrac{1}{r}\partial_{r}u^{z}\right)\Psi_{R}^{z} \, drdz\\
				&-\int_{0}^{1}\int_{R-1}^{R}\left[\left(u^{r}\partial_{r}+u^{z}\partial_{z}\right)u^{z}\right]\Psi_{R}^{z}\, drdz.
			\end{split}
		\end{equation}
		\emph{Step 2.} \emph {Saint-Venant type estimate.}
		Firstly,  it holds that
		\begin{equation}\label{eqL4uD}
			\|\Bu\|_{L^{4}(D_R)}\leq 	C\|\Bu\|^{\frac{1}{2}}_{L^{2}(D_R)} \|\Bu\|^{\frac{1}{2}}_{H^{1}(D_R)}
			\leq CR^{-\frac{1}{2}}\left(\|\nabla\Bu\|_{L^{2}(\OR)}+\|\Bu\|_{L^{2}(\partial\OR \cap \partial \Omega)}\right).
		\end{equation}
		We now estimate the terms on the right hand side of \eqref{ineqinte}. For the  first two terms, using H\"older inequality, Poincar\'{e}  inequality 	
		\eqref{eqA34} and 	
		\eqref{eqL4uD}, one obtains
		\begin{equation}\label{rese5basces1}
			\begin{split}
				\left|\int_{\Omega}\nabla\varphi_{R}\cdot \nabla\Bu\cdot\Bu  \,d\Bx \right|\leq& C\|\nabla\Bu\|_{L^{2}(\OR)}\cdot \|\Bu\|_{L^{2}(\OR)}\\
				\leq& C\|\nabla\Bu\|_{L^{2}(\OR)}\left(
				\|\nabla\Bu\|_{L^{2}(\OR)}+\|\Bu\|_{L^{2}(\partial\OR\cap \partial \Omega)}
				\right)
			\end{split}
		\end{equation}
		and
		\begin{equation}\label{res5basces2}
			\begin{split}
				\left|\int_{\Omega}\frac{1}{2}\left|\Bu \right|^2\Bu\cdot \nabla \varphi_{R} \,d\Bx\right|&\leq CR\|\Bu\|^{2}_{L^{4}(D_R)}\cdot\|u^{r}\|_{L^{2}(\DR)}  \\
				&\leq CR\cdot R^{-1}\left(\|\nabla\Bu\|_{L^{2}(\OR)}+\|\Bu\|_{L^{2}(\partial\OR\cap \partial \Omega)}\right)^{2}\cdot R^{-\frac{1}{2}}\|\nabla\Bu\|_{L^{2}(\OR)} \\
				&\leq CR^{-\frac{1}{2}}\left(\|\nabla\Bu\|_{L^{2}(\OR)}+\|\Bu\|_{L^{2}(\partial\OR\cap \partial \Omega)}\right)^{3}.
			\end{split}
		\end{equation}
		As for the right hand of \eqref{eqA39}, by \eqref{eqA34}, \eqref{eqA36} 
		and 	
		\eqref{eqL4uD}, one has
		\begin{equation}\label{sec51eqA41}
			\begin{split}
				\left| \int_{0}^{1}\int_{R-1}^{R}(\partial_{r}u^{r}\partial_{r}\Psi_{R}^{r}+\partial_{z}u^{r}\partial_{z}\Psi_{R}^{r}) \, drdz
				\right|\leq & C\|(\partial_{r}, \partial_{z})u^{r}\|_{L^{2}(D_R)}\cdot \|(\partial_{r}, \partial_{z})\Psi_{R}^{r}\|_{L^{2}(D_R)}      \\
				\leq & CR^{-\frac{1}{2}}\|\nabla\Bu\|_{L^{2}(\OR)}\cdot	R^{\frac{1}{2}}\|u^{r}\|_{L^{2}(\OR)}	\\
				\leq&
				C\|\nabla\Bu\|^{2}_{L^{2}(\OR)}
			\end{split}
		\end{equation}
		and
		\begin{equation}\label{sec52eqA42}
			\begin{split}
				\left|\int_{0}^{1}\int_{R-1}^{R}\left[\left(\dfrac{1}{r}\partial_{r}-\dfrac{1}{r^2}\right)u^{r}\right]\Psi_{R}^{r} \, drdz \right|
				\leq&	
				CR^{-1} \cdot R^{-\frac{1}{2}}\| \nabla\Bu\|_{L^{2}(\OR)}
				\cdot R^{\frac{1}{2}} \|u^{r}\|_{L^{2}(\OR)}	\\
				\leq& 
				CR^{-1}\| \nabla\Bu\|^{2}_{L^{2}(\OR)}.
			\end{split}
		\end{equation}
		Furthermore, it holds that
		\begin{equation}\label{sec53eqA43}
			\begin{split}
				&\left|\int_{0}^{1}\int_{R-1}^{R}\left[(u^{r}\partial_{r}+u^{z}\partial_{z})u^{r}\right]\Psi_{R}^{r} \, drdz\right|\\	
				\leq &C\|\Bu\|_{L^{4}(D_R)}\|(\partial_{r}, \partial_{z})u^{r}\|_{L^{2}(D_R)}\cdot \|\Psi_{R}^{r}\|_{L^{4}(D_R)}\\
				\leq
				&CR^{-\frac{1}{2}}\left(\|\nabla\Bu\|_{L^{2}(\OR)}+\|\Bu\|_{L^{2}(\partial\OR\cap \partial \Omega)}\right)\cdot R^{-\frac{1}{2}}\|\nabla \Bu\|_{L^{2}(\OR)}\|(\partial_{r},\partial_{z})\Psi_{R}^{r}\|_{L^{2}(D_R)}\\
				\leq
				&CR^{-\frac{1}{2}}\left(\|\nabla\Bu\|_{L^{2}(\OR)}+\|\Bu\|_{L^{2}(\partial\OR\cap \partial \Omega)}\right)\cdot R^{-\frac{1}{2}}\|\nabla \Bu\|_{L^{2}(\OR)}\cdot R^{\frac{1}{2}}\|\nabla \Bu\|_{L^{2}(\OR)}\\
				\leq &CR^{-\frac{1}{2}}\left(\|\nabla\Bu\|_{L^{2}(\OR)}+\|\Bu\|_{L^{2}(\partial\OR\cap \partial \Omega)}\right)^{3}
			\end{split}
		\end{equation}
		and
		\begin{equation}\label{sec53eqA443}
			\begin{split}
				&\left|\int_{0}^{1}\int_{R-1}^{R}\left[\dfrac{(u^{\theta})^2}{r}\right]\Psi_{R}^{r} \, drdz\right|\\	
				\leq
				&CR^{-1}\|\Bu\|^{2}_{L^{4}(D_R)} \cdot \|\Psi_{R}^{r}\|_{L^{2}(D_R)}\\
				\leq &CR^{-1}\cdot R^{-1}\left(\|\nabla\Bu\|_{L^{2}(\OR)}+\|\Bu\|_{L^{2}(\partial\OR\cap \partial \Omega)}\right)^{2} \cdot R^{\frac{1}{2}}\|\nabla\Bu\|_{L^{2}(\OR)}\\
				\leq &CR^{-\frac{3}{2}}\left(\|\nabla\Bu\|_{L^{2}(\OR)}+\|\Bu\|_{L^{2}(\partial\OR\cap \partial \Omega)}\right)^{3}.
			\end{split}
		\end{equation}
		Collecting the estimates \eqref{sec51eqA41}-\eqref{sec53eqA443} yields
		\begin{equation}\label{eqse5A441}
			\begin{split}
				&\left|\int_{0}^{1}\int_{R-1}^{R}\partial_{r}P\Psi^{r}_{R}\, dr dz \right| \\
				\leq& C\left(\|\nabla\Bu\|_{L^{2}(\OR)}+\|\Bu\|_{L^{2}(\partial\OR\cap \partial \Omega)}\right)^{2}+CR^{-\frac{1}{2}}\left(\|\nabla\Bu\|_{L^{2}(\OR)}+\|\Bu\|_{L^{2}(\partial\OR\cap \partial \Omega)}\right)^{3}.
			\end{split}
		\end{equation}
		Similarly, one has
		\begin{equation}\label{eqsec5A452}
			\begin{split}
				&\left|\int_{0}^{1}\int_{R-1}^{R}\partial_{z}P\Psi^{z}_{R}\, dr dz
				\right|\\
				\leq& C\left(\|\nabla\Bu\|_{L^{2}(\OR)}+\|\Bu\|_{L^{2}(\partial\OR\cap \partial \Omega)}\right)^{2}+CR^{-\frac{1}{2}}\left(\|\nabla\Bu\|_{L^{2}(\OR)}+\|\Bu\|_{L^{2}(\partial\OR\cap \partial \Omega)}\right)^{3}.
			\end{split}
		\end{equation}
		Combining \eqref{rese5basces1}-\eqref{res5basces2} and \eqref{eqse5A441}-\eqref{eqsec5A452}, one arrives at
		\begin{equation}\label{sec512eqBNIii34}
			\begin{split}
				& 	\int_{\Omega}\varphi_{R}|\nabla\Bu|^{2} \,d\Bx+2\alpha\int_{\partial\Omega} \varphi_{R}\left|\Bu \right|^2 \, dS\\
				\leq& C\left(\|\nabla\Bu\|_{L^{2}(\OR)}+\|\Bu\|_{L^{2}(\partial\OR\cap \partial \Omega)}\right)^{2}+CR^{-\frac{1}{2}}\left(\|\nabla\Bu\|_{L^{2}(\OR)}+\|\Bu\|_{L^{2}(\partial\OR\cap \partial \Omega)}\right)^{3}.
			\end{split}
		\end{equation}
		
		Let
		\begin{equation}\label{eqZRinsec5}
			\begin{split}
				Z(R)=&\int_{0}^{1}\iint_{\mathbb{R}^{2}}
				|\nabla\Bu(x_{1}, x_{2}, x_{3})|^{2}\varphi_{R}\left(\sqrt{x_{1}^{2}+x_{2}^{2}}\right)
				\, dx_{1} dx_{2} dx_{3}\\
				&+2\alpha\left(\iint_{\mathbb{R}^{2}}
				|\Bu(x_{1}, x_{2}, 0)|^{2}\varphi_{R}\left(\sqrt{x_{1}^{2}+x_{2}^{2}}\right)
				\, dx_{1} dx_{2}\right.\\
				&+\left.\iint_{\mathbb{R}^{2}}
				|\Bu(x_{1}, x_{2}, 1)|^{2}\varphi_{R}\left(\sqrt{x_{1}^{2}+x_{2}^{2}}\right)
				\, dx_{1} dx_{2} \right).
			\end{split}
		\end{equation}
		Note that for the axisymmetric solution, straightforward computations give
		\begin{equation}
			\begin{split}
				Z(R)=&\, 2\pi\int_{0}^{1}\left(\int_{0}^{R-1}|\nabla\Bu(r,z)|^{2}r\, dr+\int_{R-1}^{R}|\nabla\Bu(r,z)|^{2}(R-r)r\, dr\right )dz\\
				&+4\alpha\pi\left[\left(\int_{0}^{R-1}|\Bu(r,0)|^{2}r\, dr+\int_{R-1}^{R}|\Bu(r,0)|^{2}(R-r)r\, dr\right)\right.\\
				&\left.+\left(\int_{0}^{R-1}|\Bu(r,1)|^{2}r\, dr+\int_{R-1}^{R}|\Bu(r,1)|^{2}(R-r)r\, dr \right)\right]
			\end{split}
		\end{equation}
		and
		\begin{equation}
			Z^{\prime}(R)=\int_{\OR}|\nabla\Bu|^{2}
			\,d\Bx+2\alpha\int_{\partial\OR\cap \partial \Omega}|\Bu|^{2}\, dS.
		\end{equation}
		Hence the estimate \eqref{sec512eqBNIii34} can be written as
		\[
		Z(R)\leq CZ^{\prime}(R)+CR^{-\frac{1}{2}}\left[Z^{\prime}(R)\right]^{\frac{3}{2}}.
		\]
		It follows from  Lemma \ref{le:differineq} (b) that if $Z(R)$ is not identically zero, then
		\[
		\varliminf_{R \rightarrow + \infty} R^{-4} Z(R) > 0.
		\]
		Note that $\displaystyle\varliminf_{R \rightarrow + \infty} R^{-4} Z(R) = 0$. Therefore $Z(R)$ must be identically zero. This implies $\nabla\Bu\equiv 0$ and thus $\Bu\equiv 0$. Hence the proof of Lemma \ref{Le:51h}  is completed.
	\end{proof}
	Now we are ready to prove Theorem \ref{th:03}.
	\begin{proof}[Proof of Theorem \ref{th:03}]
		The proof is almost the same as that for Lemma \ref{Le:51h}. However, we  estimate the terms on the right hand side of  \eqref{ineqinte} in a different way. By   Poincar\'{e} inequality  \eqref{eqA34} and \eqref{eqA39}, one has
		\begin{equation}\label{eqA41}
			\begin{split}
				\left| \int_{0}^{1}\int_{R-1}^{R}(\partial_{r}u^{r}\partial_{r}\Psi_{R}^{r}+\partial_{z}u^{r}\partial_{z}\Psi_{R}^{r}) \, drdz
				\right|
				\leq & CR^{-\frac{1}{2}}\|\nabla\Bu\|_{L^{2}(\OR)}\cdot	R^{\frac{1}{2}}\|u^{r}\|_{L^{2}(\OR)}	\\
				\leq & CR^{-\frac{1}{2}}\|\nabla\Bu\|_{L^{2}(\OR)}\cdot	R^{1}\|\Bu\|_{L^{\infty}(\OR)}	\\
				\leq&
				CR^{\frac{1}{2}}\|\nabla\Bu\|_{L^{2}(\OR)}\|\Bu\|_{L^{\infty}(\OR)}
			\end{split}
		\end{equation}
		and
		\begin{equation}\label{eqA42}
			\begin{split}
				\left|\int_{0}^{1}\int_{R-1}^{R}\left[\left(\dfrac{1}{r}\partial_{r}-\dfrac{1}{r^2}\right)u^{r}\right]\Psi_{R}^{r} \, drdz \right|
				\leq&	
				CR^{-1} \cdot R^{-\frac{1}{2}}\| \nabla\Bu\|_{L^{2}(\OR)}
				\cdot R^{\frac{1}{2}} \|u^{r}\|_{L^{2}(\OR)}  \\
				\leq& CR^{-\frac{3}{2}}\|\nabla\Bu\|_{L^{2}(\OR)}\cdot	R^{1}\|\Bu\|_{L^{\infty}(\OR)}\\
				\leq&
				CR^{-\frac{1}{2}}
				\|\nabla\Bu\|_{L^{2}(\OR)}\|\Bu\|_{L^{\infty}(\OR)}.
			\end{split}
		\end{equation}
		Furthermore, it holds that
		\begin{equation}\label{eqA43}
			\begin{split}
				&\left|\int_{0}^{1}\int_{R-1}^{R}\left[(u^{r}\partial_{r}+u^{z}\partial_{z})u^{r}-\dfrac{(u^{\theta})^2}{r}\right]\Psi_{R}^{r} \, drdz\right|\\	
				\leq
				&C\|\Bu\|_{L^{\infty}(\OR)}\left(R^{-\frac{1}{2}}\|\nabla \Bu\|_{L^{2}(\OR)}+R^{-\frac{3}{2}}\|u^{\theta}\|_{L^{\infty}(\OR)}\right)\cdot R^{\frac{1}{2}}\|u^{r}\|_{L^{2}(\OR)}\\
				\leq &CR^{\frac{1}{2}}\|\nabla\Bu\|_{L^{2}(\OR)}\|\Bu\|^{2}_{L^{\infty}(\OR)}.
			\end{split}
		\end{equation}
		It follows from \eqref{eqA41}-\eqref{eqA43}  that one has
		\begin{equation}\label{sec5eqA45}
			\left|\int_{0}^{1}\int_{R-1}^{R}\partial_{r}P\Psi^{r}_{R}\, drdz
			\right|\leq CR^{\frac{1}{2}}\left(\|\Bu\|_{L^{\infty}(\OR)}+\|\Bu\|^{2}_{L^{\infty}(\OR)}\right)\|\nabla\Bu\|_{L^{2}(\OR)}.
		\end{equation}
		Similarly, one  derives
		\begin{equation}\label{sec5eqA467}
			\left|\int_{0}^{1}\int_{R-1}^{R}\partial_{z}P\Psi^{z}_{R}\, drdz
			\right|\leq CR^{\frac{1}{2}}\left(\|\Bu\|_{L^{\infty}(\OR)}+\|\Bu\|^{2}_{L^{\infty}(\OR)}\right)\|\nabla\Bu\|_{L^{2}(\OR)}.
		\end{equation}
		Collecting \eqref{basces1}-\eqref{basces2} and \eqref{sec5eqA45}-\eqref{sec5eqA467} gives
		\begin{equation}\label{ineqse5zR}
			Z(R)\leq CR^{\frac{1}{2}}\left(\|\Bu\|_{L^{\infty}(\OR)}+\|\Bu\|^{2}_{L^{\infty}(\OR)}\right) \left[Z^{\prime}(R)\right]^{\frac{1}{2}},
		\end{equation}
		where $Z(R)$  is defined in \eqref{eqZRinsec5}.
		
		Suppose $\Bu$ is not identically equal to zero and $\Bu$ satisfies \eqref{growth-coro1}.  For any small $\epsilon>0$, there exists a constant $R_{0}(\epsilon)>2$ such that
		\[
		\|\Bu\|_{L^{\infty}(\OR)}\leq \epsilon R \quad \text{for any} \, R\geq R_{0}(\epsilon).
		\]
		Due to $Z(R)>0$, the inequality \eqref{ineqse5zR} implies that
		\begin{equation}\label{eqinse5}
			(C\epsilon)^{-2}R^{-5}\leq \frac{Z^{\prime}(R)}{\left[Z(R)\right]^{2}}.
		\end{equation}
		If $\Bu$ is not equal to zero, according to Lemma \ref{Le:51h}, $Z(R)$ must be unbounded as $R\rightarrow +\infty$. For $R$ sufficiently large, integrating
		\eqref{eqinse5} over $[R, +\infty)$ one arrives at
		\begin{equation}\label{eqinse561}
			R^{-4}Z(R)\leq 4(C\epsilon)^{2}.
		\end{equation}
		Since $\epsilon$ can be arbitrarily  small, this implies
		\eqref{growth} and leads to a contradiction with the assumption that $\Bu$ is not identically zero. This finishes the proof of Theorem \ref{th:03}.
	\end{proof}


	
	



	
	\begin{appendices}
		\section*{Appendix: Regularity of bounded solutions}\label{secApp1}
		In this appendix, we give the proof of regularity estimates for Navier-Stokes system with Navier boundary conditions in detail. It might be useful for people who are interested with this estimate.
		\begin{proof}[Proof of Lemma \ref{Le:NAvbouns}]
			The proof is more or less standard. It consists of  interior regularity and boundary regularity estimates.
			
			\emph{Case 1. Boundary regularity.}
			Instead of the steady Navier-Stokes system \eqref{eqsteadyns}, we first consider the following Stokes system in a slab $ \Omega=\mathbb{R}^{2}\times (0,1)$ with Navier boundary conditions
			\begin{equation}\label{eqstokespb}
				\begin{cases}
					-\Delta \Bu +\nabla P=\Bf+\text{div} \, \mathbb{F},\,\,\,\,\, \nabla \cdot \Bu=0, \
					& \quad {\text{in} \ \Omega}, \\
					\Bu\cdot \Bn=0,\,\,\,\,\, \left[\Bn\cdot\left(2\D(\Bu)+\mathbb{F}\right)+2\alpha\Bu\right]\cdot \Bt=0, \
					& \quad  {\text{on} \ \partial \Omega},
				\end{cases}
			\end{equation}
			where	the friction coeffcient $\alpha$ satisfies
			$\alpha>0$,  $\Bf=(f^{1}, f^{2}, f^{3})$ and $\mathbb{F}$ is a $3\times3$ matrix. Here we define the domain $\widehat{\mathscr{B}_{r}}(\Bx)=\mathscr{B}_{r}(\Bx)\cap \Omega$,  $\Bx\in \partial\Omega$.
			
			
			Inspired by \cite[Section 4]{TACGJDE21} and \cite[Section 2.4]{GTH}, i.e., the $L^{2}$ theory of Stokes system, we establish the $H^{3}$ boundary estimate for  the Stokes problem \eqref{eqstokespb}.
			
			\emph{Step 1.} \emph {$H^{1}$ boundary estimate.}	Let $\xi\in C^{\infty}_{c}(\mathbb{R}^{3})$ be a smooth cut-off function satisfying 
			\begin{equation*}\label{smoothcutfun}
				\begin{split}
					&\xi\equiv 1,\,\,\,\, \text{on}\,\, \mathscr{B}_{\frac{3}{4}}(\Bx),\quad  \xi\equiv 0,\,\,\,\, \text{on}\,\, \mathbb{R}^{3} \backslash  \mathscr{B}_{\frac{5}{6}}(\Bx),
					\quad	0\leq \xi \leq 1,
				\end{split}
			\end{equation*}	
			and $\supp(\xi) \cap \mathbb{R}^3_{+}$ is smooth. Later on, the constant $C$ may depend on the cut-off function $\xi$.
			Assume that $\Bf\in L^{2}\left(\widehat{\mathscr{B}_{1}}\right)$ and $\mathbb{F}\in \mathbb{L}^{2}\left(\widehat{\mathscr{B}_{1}}\right)$,  multiplying the  equation by test function $\xi^{2}\Bu$, and integrating over $\widehat{\mathscr{B}_{1}}$, one has
			\begin{equation*}
				\int_{\widehat{\mathscr{B}_{1}}} (-\Delta \Bu + \nabla P) \cdot \xi^2\Bu \, d\Bx = \int_{\widehat{\mathscr{B}_{1}}} (\Bf + {\rm div} \mathbb{F} )  \cdot \xi^2 \Bu \, d\Bx.
			\end{equation*}
			Integration by parts and using Navier boundary conditions yield 
			\[
			\begin{split}
				&\int_{\widehat{\mathscr{B}_{1}}}-\Delta \Bu \cdot \xi^{2}\Bu\, d\Bx
				=\int_{\widehat{\mathscr{B}_{1}}}-2 \div\D(\Bu) \cdot \xi^{2}\Bu\, d\Bx\\
				=&\int_{\widehat{\mathscr{B}_{1}}}2\D(\Bu) :\D(\xi^{2}\Bu) \, d\Bx-\int_{\partial\widehat{\mathscr{B}_{1}}}2\Bn\cdot \D(\Bu)\cdot \xi^{2}\Bu\, dS\\
				=&\int_{\whBo}2\D(\Bu) :(\nabla\xi^{2}\otimes\Bu) \, d\Bx+\int_{\widehat{\mathscr{B}_{1}}}2\xi^{2}\D(\Bu) :\D(\Bu) \, d\Bx\\
				&+\int_{\partial\widehat{\mathscr{B}_{1}}}2\xi^{2}\alpha |\Bu|^{2}\, dS+\int_{\partial\widehat{\mathscr{B}_{1}}}\xi^{2}\Bn\cdot\mathbb{F}\cdot \Bu\, dS,	
			\end{split}
			\]
			where $\nabla\xi^{2}\otimes\Bu$ is the  matrix with components $\partial_{x_i}\xi^{2}u_{j}$.
			On the other hand,
			\[
			\begin{split}
				\int_{\whBo}2\D(\xi\Bu) :\D(\xi\Bu) \, d\Bx=&\int_{\whBo}2\D(\Bu) :(\nabla\xi^{2}\otimes\Bu) \, d\Bx+\int_{\whBo}2\xi^{2}\D(\Bu) :\D(\Bu) \, d\Bx\\
				&\,\,+\int_{\whBo}|\Bu|^{2}|\nabla \xi|^{2}\, d\Bx+\int_{\whBo}[(\nabla \xi)\otimes(\nabla \xi)]:(\Bu\otimes \Bu)\, d\Bx.
			\end{split}
			\]	
			Hence one has
			\begin{equation}\label{lapeq1hh}
				\begin{split}
					&\int_{\whBo}-\Delta \Bu \cdot \xi^{2}\Bu\, d\Bx\\
					=&\int_{\whBo}2\D(\xi\Bu) :\D(\xi\Bu) \, d\Bx-\int_{\whBo}|\Bu|^{2}|\nabla \xi|^{2}\, d\Bx-\int_{\whBo}[(\nabla \xi)\otimes(\nabla \xi)]:(\Bu\otimes \Bu)\, d\Bx\\
					&\,\,+\int_{\partial\whBo}2\alpha |\xi\Bu|^{2}\, dS+\int_{\partial\whBo}\xi^{2}\Bn\cdot\mathbb{F}\cdot \Bu\, dS.
				\end{split}
			\end{equation}
			For other terms, one has
			\begin{equation}\label{eqpreess12}
				\int_{\whBo} \nabla P\cdot \xi^{2}\Bu\,d\Bx
				=-\int_{\whBo}2P\xi\nabla \xi\cdot \Bu\,d\Bx
			\end{equation}
			and
			\begin{equation}\label{eqFes12}
				\int_{\whBo}\div \mathbb{F} \cdot \xi^{2}\Bu\,d\Bx
				=-\int_{\whBo}\left[\xi\mathbb{F}:\nabla(\xi\Bu)+\mathbb{F}:(\xi\nabla\xi\otimes\Bu)\right]\,d\Bx+\int_{\partial\whBo}\xi^{2}\Bn\cdot\mathbb{F}\cdot \Bu\, dS.
			\end{equation}
			Collecting \eqref{lapeq1hh}-\eqref{eqFes12}, one derives
			\begin{equation}\label{eqDutrans1}
				\begin{split}
					&\int_{\whBo} 2\D(\xi\Bu): \D(\xi \Bu) \,d\Bx+\int_{\partial\whBo}2\alpha |\xi\Bu|^{2}\, dS\\
					=&\int_{\whBo} \Bf\cdot \xi^{2} \Bu \,d\Bx+\int_{\whBo}|\Bu|^{2}|\nabla \xi|^{2}\, d\Bx+\int_{\whBo}[(\nabla \xi)\otimes(\nabla \xi)]:(\Bu\otimes \Bu)\, d\Bx\\
					&\,\,-\int_{\whBo}\left[\xi\mathbb{F}:\nabla (\xi\Bu)+\mathbb{F}:(\xi\nabla\xi\otimes\Bu)\right]\,d\Bx+\int_{\whBo}2P\xi\nabla \xi\cdot \Bu\,d\Bx.
				\end{split}
			\end{equation}	 		
			Note that $\whBo$ is a bounded Lipschitz, axisymmetric domain  and $\xi\Bu\cdot\Bn=0$ on $\partial \whBo$. From \cite[Proposition 3.13]{TACGJDE21} and \cite[Section 2.3.11]{GTH}, it holds that
			\begin{equation}\label{equival12}
				\begin{split}
					&\, C^{-1}\left( \|\D(\xi\Bu)\|_{\mathbb{L}^{2}\left(\whBo\right)}+\|\sqrt{\alpha}\xi\Bu\cdot \Bt\|_{L^{2}\left(\partial\whBo\right)}\right)\\
					\leq&\|\xi\Bu\|_{H^{1}\left(\whBo\right)}\leq C\left( \|\D(\xi\Bu)\|_{\mathbb{L}^{2}\left(\whBo\right)}+\|\sqrt{\alpha}\xi\Bu\cdot \Bt\|_{L^{2}\left(\partial\whBo\right)}\right),
				\end{split}
			\end{equation}	
			for a constant $C=C(\alpha)$.
			
			As for the term $\displaystyle\int_{\whBo} 2P\xi\nabla \xi\cdot \Bu\,d\Bx$, let $\witBo$ be a smooth domain satisfying $\widehat{\mathscr{B}_{\frac{5}{6}}} \subset \witBo \subset \whBo $. Denote  \begin{displaymath}
				\xi \nabla \xi\cdot\Bu=\psi 
				\quad
				\text{and}
				\quad
				\bar{\psi}=\dfrac{1}{|\witBo|}\displaystyle\int_{\witBo}\psi\, d\Bx.
			\end{displaymath} 
			Assume that $\displaystyle \int_{\witBo} P \, d\Bx  = 0,$ otherwise, consider
			$P - \dfrac{1}{ |\witBo|} \displaystyle \int_{\witBo} P \, d\Bx $ instead. It holds that
			\[
			\int_{\whBo}P\psi\,d\Bx = \int_{\witBo} P \psi \, d\Bx  =  \int_{\witBo}P(\psi-\bar{\psi})\,d\Bx.
			\]
			Note that $\displaystyle\int_{\witBo}(\psi-\bar{\psi})\,d\Bx=0$ and  $\psi-\bar{\psi}\in W^{1,2}(\witBo;\mathbb{R}^{3})$.
			By  Lemma \ref{Bogovskii}, there is a $\boldsymbol{\Phi}\in W^{2,2}(\witBo;\mathbb{R}^{3})$ with $\boldsymbol{\Phi}|_{\partial\witBo}=0$  such that 
			\begin{displaymath}
				\div\, \boldsymbol{\Phi}=\psi-\bar{\psi}, \quad \text{in} \,\,\,
				\witBo.
			\end{displaymath}
			Using the Stokes system \eqref{eqstokespb} and integration by parts give
			\[
			\begin{split}
				\int_{\whBo}P\psi\,d\Bx
				&= 	\int_{\witBo}P\div\, \boldsymbol{\Phi}\,d\Bx= 	\int_{\witBo}-\nabla P\cdot \boldsymbol{\Phi}\,d\Bx\\
				&=-\int_{\witBo}(\Delta \Bu+\Bf+\div \mathbb{F})\cdot \boldsymbol{\Phi}\,d\Bx\\
				&=\int_{\witBo} -\Bu\cdot \Delta \boldsymbol{\Phi}-\Bf\cdot \boldsymbol{\Phi}+ (\mathbb{F}\otimes\nabla\boldsymbol{\Phi})\,d\Bx+ \int_{\partial\witBo}\Bu\frac{\partial \boldsymbol{\Phi}}{\partial\Bn}\, dS.
			\end{split}
			\]
			By the  trace theorem, one has
			\[
			\|\boldsymbol{\Phi}\|_{H^{1}\left(\partial\witBo\right)}\leq C(\Omega) \|\boldsymbol{\Phi}\|_{H^{\frac{3}{2}}\left(\witBo\right)}.
			\]
			Due to the $H^{2}$ estimate for $\boldsymbol{\Phi}$, one derives
			\[
			\|\boldsymbol{\Phi}\|_{H^{2}\left(\witBo\right)}
			\leq C(\Omega)\|\psi\|_{H^{1}\left(\witBo\right)}\leq C(\Omega)\|\xi \Bu\|_{H^{1}\left(\whBo\right)}.
			\]
			Combining these estimates and due to that $\Bu$ is bounded on $\partial \witBo$, one derives
			\begin{equation}\label{preestima234}
				\begin{split}
					&\left|\displaystyle\int_{\whBo}2P\xi\nabla \xi\cdot \Bu\,d\Bx \right|\\
					\leq&
					C(\Omega)\left[\|\xi\Bu\|_{H^{1}\left(\whBo\right)}\left(\|\mathbb{F}\|_{\mathbb{L}^{2}\left(\whBo\right)}+\|\Bf\|_{L^{2}\left(\whBo\right)}+\|\Bu\|_{L^{2}\left(\whBo\right)}+\|\Bu\|_{L^{\infty}\left(\partial \witBo\right)}\right)\right].
				\end{split}
			\end{equation}
			From \eqref{eqDutrans1}-\eqref{preestima234} and Cauchy's inequality,
			it holds that
			\begin{equation}\label{W1PESSTOKS1}
				\begin{split}
					\|\Bu\|_{H^{1}\left(\widehat{\mathscr{B}_{\frac{3}{4}}}\right)}+	\|P\|_{L^{2}\left(\widehat{\mathscr{B}_{\frac{3}{4}}}\right)}
					\leq   C(\Omega, \alpha)
					\left(\|\mathbb{F}\|_{\mathbb{L}^{2}\left(\whBo\right)}+\|\Bf\|_{L^{2}\left(\whBo\right)}+\|\Bu\|_{L^{2}\left(\whBo\right)}+\|\Bu\|_{L^{\infty}(\Omega)}\right).
				\end{split}
			\end{equation}
			\emph{Step 2.} \emph {$H^{2}$ boundary estimate.} The proof is almost the same as that in \cite[Theorem 4.5]{TACGJDE21} and \cite[Theorem 2.4.5]{GTH}. We write it down for readers' convenience.  We consider only the case that $\mathbb{F}=0$.   Denote the difference quotient by
			\[
			D_{k}^{h}\Bu(\Bx)=\frac{\Bu(\Bx+h\Be_{k})-\Bu(\Bx)}{h}, \quad k= 1, 2,  \quad h\in \mathbb{R}.
			\]
			We choose  a smooth cut-off function $\eta\in C^{\infty}_{c}(\mathbb{R}^{3})$ such that
			\begin{equation*}\label{smoothcutfun}
				\eta\equiv 1,\,\,\,\, \text{on}\,\, \mathscr{B}_{\frac{1}{2}}(\Bx),\quad \eta\equiv 0,\,\,\,\, \text{on}\,\, \mathbb{R}^{3} \backslash  \mathscr{B}_{\frac{3}{4}}(\Bx)
				\, \, \, \text{and} \, \, \,	0\leq \eta \leq 1.
			\end{equation*}	
			\emph{(i)} \emph {Tangential $H^{2}$ estimate of the velocity.}
			Let $h>0$ be small and $\BPsi=-D_{k}^{-h}(\eta^{2}D_{k}^{h}\Bu)$, with $k=1,2$. Taking $\BPsi$ as a test function, one obtains
			\begin{equation}\label{variafortes2}
				\begin{split}
					&2\int_{\sfwitBo}\eta^{2}|D^{h}_{k}\D\Bu|^{2}\,d\Bx+2\int_{\sfwitBo}D^{h}_{k}\D\Bu:(2\eta\nabla\eta\otimes D^{h}_{k}\Bu)\,d\Bx+\int_{\partial\sfwitBo}2\alpha \eta^{2}|D^{h}_{k}\Bu|^{2}\, dS\\
					&\,\,-\int_{\sfwitBo}P\div(-D_{k}^{-h}(\eta^{2}D_{k}^{h}\Bu))\, d\Bx=\int_{\sfwitBo} \Bf\cdot (-D_{k}^{-h}(\eta^{2}D_{k}^{h}\Bu))\,d\Bx.
				\end{split}
			\end{equation}
			Note that there exists an  $\epsilon>0$ such that
			\begin{equation}\label{fireqesu1}
				\left|	\int_{\sfwitBo}D^{h}_{k}\D\Bu:(2\eta\nabla\eta\otimes D^{h}_{k}\Bu)\,d\Bx\right|\leq C\left[\epsilon\int_{\sfwitBo}\eta^{2}|D^{h}_{k}\D\Bu|^{2}\,d\Bx+\frac{1}{\epsilon}\int_{\sfwitBo}|D^{h}_{k}\Bu|^{2}\,d\Bx\right].
			\end{equation}
			Similarly, one has
			\begin{equation}\label{eqesP2}
				\begin{split}
					& \left|\int_{\sfwitBo}P\div(-D_{k}^{-h}(\eta^{2}D_{k}^{h}\Bu))\, d\Bx\right|\\
					\leq&\epsilon\int_{\sfwitBo}|\div(-D_{k}^{-h}(\eta^{2}D_{k}^{h}\Bu))|^{2}\, d\Bx+\frac{C}{\epsilon}\int_{\sfwitBo}|P|^{2}\,d\Bx\\
					\leq& C\left[\epsilon\left(\int_{\sfwitBo}|D^{h}_{k}\Bu|^{2}\,d\Bx +\int_{\sfwitBo}\eta^{2}|D^{-h}_{k}D^{h}_{k}\Bu|^{2}\,d\Bx\right)\right]+\frac{C}{\epsilon}\int_{\sfwitBo}|P|^{2}\,d\Bx\\
					\leq& C\left[\epsilon\left(\int_{\sfwitBo}|D^{h}_{k}\Bu|^{2}\,d\Bx +\int_{\sfwitBo}\eta^{2}|\nabla D^{h}_{k}\Bu|^{2}\,d\Bx\right)\right]+\frac{C}{\epsilon}\int_{\sfwitBo}|P|^{2}\,d\Bx,
				\end{split}
			\end{equation}
			where the second inequality is due to
			\[
			\begin{split}
				\div(D_{k}^{-h}\left(\eta^{2}D_{k}^{h}\Bu\right))
				=&D_{k}^{-h}\div(\eta^{2}D_{k}^{h}\Bu)=D_{k}^{-h}
				(2\eta\nabla\eta\cdot D_{k}^{h}\Bu)+D_{k}^{-h}\left(\eta^{2}\underbrace{\div (D_{k}^{h}\Bu)}_{=0}\right)\\
				=&D_{k}^{-h}(2\eta\nabla\eta)\cdot D_{k}^{h}\Bu(\Bx-h\Be_{k})+2\eta\nabla\eta\cdot D_{k}^{-h} D_{k}^{h}\Bu.
			\end{split}
			\]	
			Similarly, one has
			\begin{equation}\label{eqfes3}
				\begin{split}
					& \left|	\int_{\sfwitBo}\Bf\cdot (-D_{k}^{-h}(\eta^{2}D_{k}^{h}\Bu))\, d\Bx\right|\\
					\leq&
					\epsilon\int_{\sfwitBo}|(D_{k}^{-h}(\eta^{2}D_{k}^{h}\Bu))|^{2}\, d\Bx+\frac{C}{\epsilon}\int_{\sfwitBo}|\Bf|^{2}\,d\Bx\\
					\leq&
					\epsilon\int_{\sfwitBo}|(\nabla(\eta^{2}D_{k}^{h}\Bu))|^{2}\,d\Bx+\frac{C}{\epsilon}\int_{\sfwitBo}|\Bf|^{2}\,d\Bx\\
					\leq& C\left[\epsilon\left(\int_{\sfwitBo}|D^{h}_{k}\Bu|^{2}\,d\Bx +\int_{\sfwitBo}\eta^{2}|\nabla D^{h}_{k}\Bu|^{2}\,d\Bx\right)\right]+\frac{C}{\epsilon}\int_{\sfwitBo}|\Bf|^{2}\,d\Bx.
				\end{split}
			\end{equation} 	
			For $\alpha>0$, it follows from \eqref{variafortes2}-\eqref{eqfes3}  that
			\begin{equation}\label{eq12eta}
				\begin{split}
					&2\int_{\sfwitBo}\eta^{2}|D^{h}_{k}\D\Bu|^{2}\,d\Bx\\
					\leq&C \epsilon\left(\int_{\sfwitBo}\eta^{2}|\D D^{h}_{k}\Bu|^{2}\,d\Bx+\int_{\sfwitBo}\eta^{2}|\nabla D^{h}_{k}\Bu|^{2}\,d\Bx\right)+\frac{C}{\epsilon}\left(\int_{\sfwitBo}|\Bf|^{2}\,d\Bx+\int_{\sfwitBo}|P|^{2}\,d\Bx\right)\\
					&\,\,+C\int_{\sfwitBo}|D^{h}_{k}\Bu|^{2}\,d\Bx\\
					\leq&C \epsilon\left(\int_{\sfwitBo}\eta^{2}|\nabla D^{h}_{k}\Bu|^{2}\,d\Bx\right)+\frac{C}{\epsilon}\left(\int_{\sfwitBo}|\Bf|^{2}\,d\Bx+\int_{\sfwitBo}|P|^{2}\,d\Bx\right)+C\int_{\sfwitBo}|D^{h}_{k}\Bu|^{2}\,d\Bx.
				\end{split}
			\end{equation}	
			Note that
			\begin{equation}
				\begin{split}
					\|\eta\nabla D_{k}^{h}\Bu\|^{2}_{\mathbb{L}^{2}\left(\sfwitBo\right)}
					&=\|\nabla (\eta D_{k}^{h}\Bu)-(\nabla \eta\otimes D_{k}^{h}\Bu)\|^{2}_{\mathbb{L}^{2}\left(\sfwitBo\right)} \\
					&\leq	C\left(\|\eta D_{k}^{h}\Bu\|_{H^{1}\left(\sfwitBo\right)}^{2}+\| D_{k}^{h}\Bu\|_{L^{2}\left(\sfwitBo\right)}^{2} \right).
				\end{split}
			\end{equation}
			Furthermore, by  Korn's inequality \eqref{kornin1} one has
			\begin{equation}\label{korinu2}
				\begin{split}
					\|\eta D_{k}^{h}\Bu\|_{H^{1}\left(\sfwitBo\right)}
					\leq& C\left(\|\eta D_{k}^{h}\Bu\|_{L^{2}\left(\sfwitBo\right)}+\|\D(\eta D_{k}^{h}\Bu)\|_{\mathbb{L}^{2}\left(\sfwitBo\right)}\right)\\
					\leq& C\left(\|\eta D_{k}^{h}\Bu\|_{L^{2}\left(\sfwitBo\right)}+\|\nabla\eta
					D_{k}^{h}\Bu\|_{L^{2}\left(\sfwitBo\right)}+\|\eta\D D_{k}^{h}\Bu\|_{\mathbb{L}^{2}\left(\sfwitBo\right)}\right)\\
					\leq& C\left(\| D_{k}^{h}\Bu\|_{L^{2}\left(\sfwitBo\right)}+\|\eta\D D_{k}^{h}\Bu\|_{\mathbb{L}^{2}\left(\sfwitBo\right)}\right).
				\end{split}
			\end{equation}
			Combining \eqref{eq12eta}-\eqref{korinu2} and choosing  $\epsilon$ sufficiently small, one has
			\begin{equation}\label{eqH1est1}
				\|D_{k}^{h}\Bu\|^{2}_{H^{1}\left(\widehat{\mathscr{B}_{\frac{1}{2}}}\right)}\leq 
				\|\eta D_{k}^{h}\Bu\|^{2}_{H^{1}\left(\sfwitBo\right)}
				\leq C\left(\|\Bf\|^{2}_{L^{2}\left(\sfwitBo\right)}+\|P\|^{2}_{L^{2}\left(\sfwitBo\right)}+\|D^{h}_{k}\Bu\|^{2}_{L^{2}\left(\sfwitBo\right)}\right).
			\end{equation}
			By adding a constant, we can assume that $P\in L^2_0\left(\sfwitBo\right)$. Hence $P$ satisfies
			\begin{equation}\label{eqPes176}
				\begin{split}
					&\|P\|_{L^{2}\left(\sfwitBo\right)}\leq C(\Omega)\|\nabla P\|_{H^{-1}\left(\sfwitBo\right)}\\
					\leq&
					C(\Omega)\|\Delta\Bu+\Bf\|_{H^{-1}\left(\sfwitBo\right)}\\
					\leq& C(\Omega)\left(\|\Bf\|_{L^{2}\left(\sfwitBo\right)}+\|\Bu\|_{H^{1}\left(\sfwitBo\right)}\right).
				\end{split}
			\end{equation}
			It follows from \eqref{eqH1est1} and \eqref{eqPes176} that one has
			\begin{equation}\label{es12uij}
				\left\|\frac{\partial^{2}\Bu}{\partial{x_i}\partial{x_j}}\right
				\|_{L^{2}\left(\widehat{\mathscr{B}_{\frac{1}{2}}}\right)}\leq C\left(\|\Bf\|_{L^{2}\left(\sfwitBo\right)}+\|\Bu\|_{H^{1}\left(\sfwitBo\right)}\right),
			\end{equation}
			for all $i,j=1, 2, 3$ except $i=j=3$.
			
			\emph{(ii)} \emph {Normal  $H^{2}$ estimate of the velocity.}
			We first deduce the tangential  regularity of the pressure. It is noted that
			\[
			\nabla \frac{\partial P}{\partial x_{i}}
			=\frac{\partial }{\partial x_{i}}(\nabla P)=\frac{\partial }{\partial x_{i}}(\Bf+\Delta \Bu)
			=\frac{\partial \Bf}{\partial x_{i}}+\div \left(\nabla \frac{\partial \Bu}{\partial x_{i}}\right),
			\]
			for $i=1,2$. Since there is  no term of the form $\partial^{2}\Bu/\partial{x_{3}^{2}}$, combining the estimate  \eqref{es12uij},
			we obtain $ \nabla \frac{\partial P}{\partial x_{i}}\in H^{-1}\left(\widehat{\mathscr{B}_{\frac{1}{2}}}\right)$. In fact, it follows from Ne\v{c}as inequality \eqref{necas1in} and \eqref{eqPes176}-\eqref{es12uij} that
			\begin{equation}\label{estiPpar17}
				\begin{split}
					\left\|\frac{\partial P}{\partial x_{i}}\right\|_{L^{2}\left(\widehat{\mathscr{B}_{\frac{1}{2}}} \right)} 
					&\leq C\left(\left\|\nabla\frac{\partial P}{\partial x_{i}}\right\|_{H^{-1}\left(\widehat{\mathscr{B}_{\frac{1}{2}}}\right)}+\left\|\frac{\partial P}{\partial x_{i}}\right\|_{H^{-1}\left(\widehat{\mathscr{B}_{\frac{1}{2}}}\right)}\right)\\	
					&\leq C\left(\|\Bf\|_{L^{2}\left(\sfwitBo\right)}+\|\Bu\|_{H^{1}\left(\sfwitBo\right)}\right).
				\end{split}
			\end{equation}
			
			Next we study the normal regularity of the velocity $\Bu$ and the pressure $P$.
			Differentiating the divergence free equation with respect to $x_{3}$, we get
			\[
			\frac{\partial^{2}u^{3}}{\partial x_{3}^{2}}=-\sum_{i=1}^{2}\frac{\partial^{2}u^{i}}{\partial x_{i}\partial x_{3}}\in L^{2}\left(\widehat{\mathscr{B}_{\frac{1}{2}}}\right) \quad \text{and} \quad
			\frac{\partial P}{\partial x_{3}}=f^{3}+\Delta u^{3} \in L^{2}\left(\widehat{\mathscr{B}_{\frac{1}{2}}}\right).
			\]
			For $i=1, 2$,
			\[
			\frac{\partial^{2}u^{i}}{\partial{x_{3}^{2}}}=-\sum_{k=1}^{2}\frac{\partial^{2}u^{i}}{\partial{x_{k}^{2}}}-f^{i}+\frac{\partial P}{\partial x_{i}}\in L^{2}\left(\widehat{\mathscr{B}_{\frac{1}{2}}}\right).
			\]
			These identities together with the estimates \eqref{es12uij}-\eqref{estiPpar17} imply that
			\begin{equation}\label{W1PESSTOKS2}
				\|\Bu\|_{H^{2}\left(\widehat{\mathscr{B}_{\frac{1}{2}}}\right)}+\|P\|_{H^{1}\left(\widehat{\mathscr{B}_{\frac{1}{2}}}\right)}\leq C(\Omega,\alpha)\left(\|\Bf\|_{L^{2}\left(\sfwitBo\right)}+\| \Bu\|_{H^{1}\left(\sfwitBo\right)}\right).
			\end{equation}
			
			\emph{Step 3.} \emph {$H^{3}$ boundary estimate.}	Here we consider the case $\Bf\in H^{1}\left(\widehat{\mathscr{B}_{\frac{1}{2}}}\right)$ and $\mathbb{F} \equiv 0$. By the interior regularity result \cite[Theorem IV.4.1]{GAGP11}, $\Bu\in H^{3}\left(\widehat{\mathscr{B}_{\frac{1}{2}}}\right)$. For $i=1, 2$, taking the derivatives with respect to $x_i$,
			\begin{equation}\label{diffeqstokespb}
				\begin{cases}
					-\Delta \partial_{x_i} \Bu +\nabla \partial_{x_i}P=\partial_{x_i}\Bf,\,\,\,\,\, \nabla \cdot \partial_{x_i} \Bu=0, \
					& \quad {\text{in} \ \widehat{\mathscr{B}_{\frac{1}{2}}}}, \\
					\partial_{x_i}\Bu\cdot \Bn=0,\,\,\,\,\, \left[\Bn\cdot\D(\partial_{x_i}\Bu)+\alpha \partial_{x_i}\Bu\right]\cdot \Bt=0, \
					& \quad  {\text{on} \ \widehat{\mathscr{B}_{\frac{1}{2}}} \cap \partial \Omega}.
				\end{cases}
			\end{equation}
			$\partial_{x_i} \Bu$ satisfies the
			$H^{2}$ estimate, i.e.,
			\begin{equation}\label{W1PESSTOKS23ne}
				\|\partial_{x_i}\Bu\|_{H^{2}\left(\widehat{\mathscr{B}_{\frac{1}{4}}}\right)}+\|\partial_{x_i}P\|_{H^{1}\left(\widehat{\mathscr{B}_{\frac{1}{4}}}\right)}\leq C(\Omega,\alpha)\left(\|\Bf\|_{H^{1}\left(\widehat{\mathscr{B}_{\frac{1}{2}}}\right)}+\| \Bu\|_{H^{2}\left(\widehat{\mathscr{B}_{\frac{1}{2}}}\right)}\right).
			\end{equation}
			Finally,  taking the $\partial_{x_3}^2$-derivative of divergence free equation 
			\begin{equation*}
				\frac{\partial^{3}u^{3}}{\partial x_{3}^{3}}=-\sum_{k=1}^{2}\frac{\partial^{3}u^{k}}{\partial x_{3}^{2}\partial x_{k}}\in L^{2}\left(\widehat{\mathscr{B}_{\frac{1}{4}}}\right).
			\end{equation*}
			Differentiating the Stokes system \eqref{eqstokespb} with respect to $x_{3}$, one gets
			\begin{equation*}
				\frac{\partial^{3}u^{i}}{\partial x_{3}^{3}}=-\sum_{k=1}^{2}\frac{\partial^{3}u^{i}}{\partial x_{3}\partial x^{2}_{k}}+\partial_{x_3}\partial_{x_i} P+\partial_{x_3}f^{i} \in L^{2}\left(\widehat{\mathscr{B}_{\frac{1}{4}}}\right)
			\end{equation*}
			and
			\begin{equation*}
				\partial^{2}_{x_3}P=\Delta\partial_{x_3}u^{3}+\partial_{x_3}f^{3} \in L^{2}\left(\widehat{\mathscr{B}_{\frac{1}{4}}}\right).
			\end{equation*}
			Hence,  the $H^{3}$ boundary estimate holds,
			\begin{equation}\label{W1PESSTOKS456}
				\|\Bu\|_{H^{3}\left(\widehat{\mathscr{B}_{\frac{1}{4}}}\right)}+\|P\|_{H^{2}\left(\widehat{\mathscr{B}_{\frac{1}{4}}}\right)}\leq C(\Omega,\alpha)\left(\|\Bf\|_{H^{1}\left(\widehat{\mathscr{B}_{\frac{1}{2}}}\right)}+\|\Bu\|_{H^{2}\left(\widehat{\mathscr{B}_{\frac{1}{2}}}\right)}\right).
			\end{equation}
			
			Now we turn to the regularity estimates for bounded solutions $\Bu$ to the Navier-Stokes system \eqref{eqsteadyns}.	
			The $H^{1}$ estimate 	\eqref{W1PESSTOKS1} tells that
			\begin{equation}
				\|\nabla\Bu\|_{L^{2}\left(
					\mathscr{B}_{\frac{3}{4}}(\Bx)\cap \Omega\right)}
				\leq C
				\left(\|\Bu\otimes\Bu\|_{\mathbb{L}^{2} \left(\mathscr{B}_{1}
					(\Bx)\cap \Omega\right)}+\|\Bu\|_{{L}^{2}
					\left(\mathscr{B}_{1}
					(\Bx)\cap \Omega\right)}+\|\Bu\|_{L^{\infty}(\Omega)}\right)
				\leq C, \quad \Bx\in \partial\Omega.
			\end{equation}
			Furthermore,  due to the $H^{2}$ estimate \eqref{W1PESSTOKS2},
			one has
			\begin{equation}\label{W14ES}
				\|\nabla\Bu\|_{H^{1}\left(
					\mathscr{B}_{\frac{1}{2}}(\Bx)
					\cap \Omega\right)}\leq C\left(\|\Bu \cdot \nabla\Bu\|_{L^{2}\left(
					\mathscr{B}_{\frac{3}{4}}(\Bx)
					\cap \Omega\right)}
				+\|\Bu\|_{H^{1}\left(
					\mathscr{B}_{\frac{3}{4}}(\Bx)
					\cap \Omega\right)}\right)\leq C,
				\quad \Bx \in \partial\Omega.
			\end{equation}
			Finally, according to the $H^{3}$ estimate \eqref{W1PESSTOKS456}, one has
			\begin{equation}\label{W156ES}
				\|\nabla\Bu\|_{H^{2}\left(
					\mathscr{B}_{\frac{1}{4}}(\Bx)
					\cap \Omega\right)}\leq C\left(\|\Bu \cdot \nabla\Bu\|_{H^{1}\left(
					\mathscr{B}_{\frac{1}{2}}(\Bx)
					\cap \Omega\right)}+	\|\Bu\|_{H^{2}\left(
					\mathscr{B}_{\frac{1}{2}}(\Bx)
					\cap \Omega\right)}\right)\leq C,
				\quad \Bx \in \partial\Omega.
			\end{equation}
			\emph{Case 2. Interior regularity.}
			According to \cite[Theorem IV.4.1, Theorem IV.4.4, Remark IV.4.2]{GAGP11}, it holds that for any $\Bx\in \mathbb{R}^{2}\times \left[\frac{1}{8}, \frac{7}{8}\right]$,
			\begin{equation}
				\|\nabla\Bu\|_{L^{4}\left(
					\mathscr{B}_{\frac{5}{64}}(\Bx)\right)}\leq
				C\|\Bu\|^{2}_{L^{8}\left(
					\mathscr{B}_{\frac{7}{64}}(\Bx)\right)}+C\|\Bu\|_{L^{4}\left(
					\mathscr{B}_{\frac{7}{64}}(\Bx)\right)}\leq C.
			\end{equation}
			Moreover, one has
			\begin{equation}\label{W1peshh}
				\|\nabla\Bu\|_{W^{1,4}\left(\mathscr{B}_{\frac{1}{32}}(\Bx)\right)}
				\leq C\|\nabla\Bu \|_{L^{4}\left(\mathscr{B}_{\frac{5}{64}}(\Bx)\right)}\|\Bu\|_{L^{\infty}\left(\mathscr{B}_{\frac{5}{64}}(\Bx)\right)}+C\|\Bu \|_{W^{1,4}\left(\mathscr{B}_{\frac{5}{64}}(\Bx)\right)}\leq C.
			\end{equation}
			
			Hence it follows from the Sobolev embedding inequality and the estimates \eqref{W156ES}, \eqref{W1peshh} that $\nabla \Bu$ is bounded. The proof of Lemma  \ref{Le:NAvbouns} is completed.
		\end{proof}
	\end{appendices}	
	
	{\bf Acknowledgement.}
	The research was partially supported by the National Key R$\&$D Program of China, Project Number 2020YFA0712000.  The research of Wang was partially supported by NSFC grants 12171349 and 12271389. The research of  Xie was partially supported by  NSFC grants 11971307 and 1221101620, Fundamental Research Grants for Central  universities,  Natural Science Foundation of Shanghai 21ZR1433300, and Program of Shanghai Academic Research leader 22XD1421400.
	
	\medskip

\end{document}